\newcommand{\ignore}[1]{\bigskip TIKZ GRAPHIC\bigskip}
\newcommand{\N}{{\mathbb{N}}}
\newcommand{\Z}{{\mathbb{Z}}}
\newcommand{\Q}{{\mathbb{Q}}}
\newcommand{\C}{{\mathbb{C}}}
\newcommand{\Zplus}{{\mathbb{Z}^+}}
\newcommand{\calM}{\mathcal{M}}
\newcommand{\ol}{\overline}
\newcommand{\xbar}{\ol{x}}
\newcommand{\ybar}{\ol{y}}
\newcommand{\zbar}{\ol{z}}
\newcommand{\ubar}{\ol{u}}
\newcommand{\Cbar}{\ol{C}}
\newcommand{\Ebar}{\ol{E}}
\newcommand{\Mbar}{\ol{\calM}}
\newcommand{\Lab}{L^{{\rm ab}}}
\newcommand{\uloopr}[1]{\ar@'{@+{[0,0]+(-4,5)}@+{[0,0]+(0,10)}@+{[0,0] +(4,5)}}^{#1}}
\newcommand{\uloopd}[1]{\ar@'{@+{[0,0]+(5,4)}@+{[0,0]+(10,0)}@+{[0,0]+ (5,-4)}}^{#1}}
\newcommand{\dloopr}[1]{\ar@'{@+{[0,0]+(-4,-5)}@+{[0,0]+(0,-10)}@+{[0, 0]+(4,-5)}}_{#1}}
\newcommand{\dloopd}[1]{\ar@'{@+{[0,0]+(-5,4)}@+{[0,0]+(-10,0)}@+{[0,0 ]+(-5,-4)}}_{#1}}
\newcommand{\luloop}[1]{\ar@'{@+{[0,0]+(-8,2)}@+{[0,0]+(-10,10)}@+{[0, 0]+(2,2)}}^{#1}}
\newcommand{\dotedge}{\ar@{.}}
\newcommand{\eqedge}{\ar@{=}}
\DeclareMathOperator{\soc}{soc}
\DeclareMathOperator{\rank}{rank}
\DeclareMathOperator{\rk}{rk}
\DeclareMathOperator{\ped}{ped}
\theoremstyle{plain}
\newtheorem{theorem}{Theorem}[section]
\newtheorem{lemma}[theorem]{Lemma}
\newtheorem{proposition}[theorem]{Proposition}
\newtheorem{corollary}[theorem]{Corollary}
\theoremstyle{definition}
\newtheorem{definition}[theorem]{Definition}
\newtheorem{example}[theorem]{Example}
\newtheorem{remark}[theorem]{Remark}
\newtheorem*{remark*}{Remark}
\newtheorem*{remarks*}{Remarks}
\newtheorem{question}[theorem]{Question}
\newtheorem*{assumption*}{Assumption}
\newtheorem{construction}[theorem]{Construction}
\numberwithin{equation}{section}
\begin{document}

\title[The realization problem for some wild monoids
and the Atiyah problem]
{The realization problem for some wild monoids \\
and the Atiyah problem}

\author{P. Ara}

\address{Departament de Matem\`atiques, Universitat Aut\`onoma de Barcelona,
08193 Bellaterra (Barcelona), Spain.} \email{para@mat.uab.cat}

\author{K. R. Goodearl}

\address{Department of Mathematics, University of
California, Santa Barbara, CA 93106. }\email{goodearl@math.ucsb.edu}

\thanks{The first-named
author was partially supported by DGI MINECO
MTM2011-28992-C02-01, by FEDER UNAB10-4E-378
``Una manera de hacer Europa", and by the
Comissionat per Universitats i
Recerca de la Generalitat de Catalunya. Part of this research was undertaken while the second-named author held a sabbatical fellowship from the Ministerio de Educaci\'on y Ciencias de Espa\~na at the Centre de Recerca Matem\`atica in Barcelona during spring 2011.
He thanks both institutions for their support and hospitality.}

\begin{abstract}
The {\it Realization Problem} for (von Neumann) regular rings asks what are the conical refinement monoids which can be obtained as the monoids of isomorphism classes of
finitely generated projective modules over a regular ring. The analogous realization question for the larger class of exchange rings is also of interest.
A refinement monoid is said to be {\it wild} if it cannot be expressed as a direct limit of finitely generated refinement monoids. In this paper, we consider the problem of realizing some
concrete wild refinement monoids by regular rings and by exchange rings. The most interesting monoid we consider is the monoid $\mathcal M$ obtained by successive
refinements of the identity $x_0+y_0=x_0+z_0$. This monoid is known to be realizable by the algebra $A= K[\mathcal F] $ of the monogenic free inverse monoid $\mathcal F$, for any choice of field $K$,
but $A$ is not an exchange ring. We show that, for any uncountable field $K$,  $\mathcal M$ is not realizable by a regular $K$-algebra, but that
a suitable universal localization $\Sigma ^{-1}A$ of $A$ provides an exchange, non-regular, $K$-algebra realizing $\mathcal M$. For any countable field $F$, we show that a skew version of the above
construction gives a regular $F$-algebra realizing $\mathcal M$. Finally, we develop some connections with the Atiyah Problem for the lamplighter group. We prove that the algebra $A$ can be naturally
seen as a $*$-subalgebra of the group algebra $kG$ over the lamplighter group $G= \Z_2\wr \Z$, for any
subfield $k$ of $\C$ closed under
conjugation, and we determine the structure of the $*$-regular closure of $A$ in $\mathcal U G$. Using this, we show that the subgroup of $\mathbb R$ generated by the von Neumann dimensions of matrices over $kG$ contains $\Q$.
\end{abstract}

\maketitle


\section{Introduction}

\subsection{The Realization Problem}
An important invariant in non-stable K-theory is the commutative monoid $V(R)$ associated to any ring $R$, consisting of the isomorphism classes of finitely generated projective (left, say)
$R$-modules, with the operation induced from direct sum. If $R$ is a (von Neumann) regular ring or a C*-algebra with real rank zero,
then $V(R)$ is a conical refinement monoid. This is also the case for all exchange rings, a class which provides a common generalization of the
above-mentioned rings and algebras  (e.g., \cite[Corollary 1.3, Theorem 7.3]{AGOP}). The \emph{Realization Problem} asks which conical refinement monoids appear as a $V(R)$ for $R$ in one of those classes. This problem encompasses all questions as to which kinds of direct sum decomposition behavior can occur for finitely generated projective modules over regular rings or exchange rings, or for projection matrices over C*-algebras with real rank zero. For instance, the fundamental \emph{separativity problem} asks whether the finitely generated projective modules over any regular ring $R$ satisfy \emph{separative cancellation}: ($A\oplus A \cong A\oplus B \cong B\oplus B \implies A \cong B$); equivalently, does $(a+a = a+b = b+b \implies a=b$) hold in $V(R)$? It is known that non-separative conical refinement monoids exist, so the problem is whether any of them are realizable over regular rings. 
This is open, and it is also open over exchange rings and over C*-algebras with real rank zero.

Wehrung \cite[Corollary 2.12 ff.]{W98IsraelJ} constructed a conical refinement monoid of cardinality $\aleph_2$ which is not isomorphic to $V(R)$ for any regular
ring $R$, but it is an open problem to determine whether every countable conical refinement monoid can be realized as $V(R)$ for some regular $R$. The countable case is the most important one, since problems such as the separativity problem can be reduced to realizability questions for appropriate countable conical refinement monoids. Another example of Wehrung's, \cite[Theorem 13.6]{W13ART}, shows that the realization problems differ for the above-mentioned classes of rings and algebras: There exist conical refinement monoids (which can be chosen of cardinality $\aleph_3$) which are realizable over exchange rings, but not realizable over any regular ring or over any C*-algebra of real rank zero.

We refer the reader to \cite{Areal} for a survey on the Realization Problem, and to \cite{directsum} for a survey on direct sum decomposition problems over regular rings.

A class of conical refinement monoids is provided by the construction of the
monoid $M(E)$ associated to any directed graph $E$, see \cite{AMP} for the row-finite case, and \cite{AG} for the general case.
In the row-finite case, these monoids can be realized by regular rings thanks to the work in \cite{AB}.
If $E$ is a finite directed graph and $K$ is any field, then the regular algebra of $E$ is $Q_K(E)=\Sigma^{-1}L_K(E)$, where $L_K(E)$ is the Leavitt path algebra
of $E$ and $\Sigma^{-1}L_K(E)$ is a suitable universal localization of $L_K(E)$. The algebra $Q_K(E)$ is regular and $V(Q_K(E))\cong V(L_K(E))\cong M(E)$.
Direct limit arguments then give the realization of every row-finite graph monoid $M(E)$ by a regular $K$-algebra $Q_K(E)$. In \cite{Aposet}, the first-named author obtained a realization result
for the class of monoids $M(\mathbb P)$ associated to finite posets $\mathbb P$. Here $M(\mathbb P)$ is the monoid generated by $\{ a_p : p\in \mathbb P \}$ subject to the relations
$a_p= a_p+a_q$ for $q<p$ in $\mathbb P$.

\subsection{Tame and wild refinement monoids}
The largest known classes of realizable refinement monoids consist of inductive limits of simple ingredients, such as
finite direct sums of copies of $\Zplus$ or $\{0,\infty\}$. In the case of graph monoids, these can be expressed
as direct limits of graph monoids associated to finite graphs (see the second proof of \cite[Theorem 3.1]{TW}).
In all cases, the corresponding refinement monoid is a direct limit of \emph{finitely generated refinement monoids}.
These monoids are more universally realizable in the sense that they can be realized as $V(R)$ for regular
algebras $R$ over any prescribed field. By contrast, examples are known of countable conical refinement monoids which are realizable only for regular algebras over \emph{some} countable
field (see \cite[Section 4]{Areal}).

These considerations led us in \cite{TW} to separate the class of refinement monoids into subclasses of \emph{tame} and \emph{wild} refinement monoids, where the tame ones are the inductive
limits of finitely generated refinement monoids and the rest are wild. The reader is referred to \cite{TW} for the basic theory of tame monoids.
Let us just mention that tame refinement monoids satisfy a number of desirable properties, such as separative cancellation and lack of perforation.

There are not many known explicit examples of wild monoids, though there is  evidence suggesting that they are really abundant.
Two explicit examples were studied in detail in \cite{TW}. These examples can be considered the easiest and best behaved examples of
wild refinement monoids. These monoids will be denoted in this paper by $\calM$ and $\Mbar$, as in \cite{TW}. (Their definitions will be recalled later.)
The monoid $\Mbar$ is a quotient of $\calM$ by an order-ideal, and can be understood as an auxiliary object in our investigation.

\subsection{Realizability of $\calM$}
In this paper, we study the particular wild monoid $\calM$ in terms of the Realization Problem
for exchange rings and regular rings. Although this conical refinement monoid is quite well-behaved--it is separative, unperforated, and archimedean--the subtleties of the
problem arise already for it. Indeed, $\calM$ admits a faithful state and it is not cancellative, so it cannot be realized by any regular
algebra over an uncountable field, by \cite[Proposition 4.1]{Areal}. In contrast, our main results provide two realizability theorems: (I) Over an arbitrary field $K$, there is an \emph{exchange} $K$-algebra which realizes $\calM$; (II) Over an arbitrary \emph{countable} field $F$, there is a regular $F$-algebra which realizes $\calM$. The algebras appearing in both (I) and (II) are explicitly constructed.

To give a bit more detail: The main construction in \cite{AE}, applied to $\calM$ and some additional data, gives rise to a well-behaved algebra $A$ over an arbitrary field $K$, which is
in some sense the analogue of the Leavitt path algebra of a directed graph closely related to $\calM$. Mimicking the construction of the regular algebra of a graph,
we can form the universal localization $\Sigma^{-1}A$ with respect to a suitable
set $\Sigma$ of elements of $A$, but the result of this construction is only an exchange algebra. We show in Theorem \ref{thm:exchangerealization} that
$V(\Sigma^{-1}A)\cong \calM$. This is apparently the first example of a countable conical refinement monoid which can be realized by
an exchange $K$-algebra, but not by a regular $K$-algebra, for any choice of uncountable field $K$.
It is interesting also to notice that the algebra $A$ has already appeared in the literature. It is precisely the semigroup algebra of the \emph{free monogenic inverse monoid}, that is, of the free
inverse monoid in one generator. Free inverse semigroups and their associated algebras have been extensively studied in the literature, see e.g. \cite{Preston, Munn, HR}. In particular,
Crabb and Munn computed the center of the algebra of a free inverse monoid in \cite{CrabbMunn}.

Our second main result is the realization of the monoid $\calM$ by a regular $F$-algebra, where $F$ is an arbitrary \emph{countable} field (Theorem \ref{thm:realizing}).
The algebra $\Sigma^{-1} A$ is not regular for any choice of base field $K$, so we cannot use this construction. Rather, we build a ``skew'' version of the above
algebra, working inside the algebra $\prod_{n=1}^{\infty} M_n(F)$. The idea is to build a new algebra $R$ over which the relations characterizing $\Sigma^{-1}A$ hold only in a ``relaxed" way, making it possible
for the new algebra to satisfy $V(R)\cong \calM$.

\subsection{Connections with the Atiyah Problem}
We analyse in the final section of the paper the relationship of our constructions with some questions closely connected with the
Atiyah Problem for the lamplighter group. Given any discrete group $\Gamma $, we may consider the
$*$-regular ring $\mathcal U (\Gamma )$, which is the classical ring of quotients of the von Neumann
algebra $\mathcal N (\Gamma )$, see \cite{luck} for details. Atiyah's original question asked whether certain
analytic $L^2$-Betti numbers of manifolds are rational. It has evolved into conjectures about the von Neumann dimensions of matrices
over the complex group algebras of general discrete groups $\Gamma$ (e.g., see the introduction to \cite{GLSZ}). Namely:

{\bf Strong Atiyah Conjecture}: If $T$ is an $m\times n$ matrix over $\C \Gamma$, then
$$\dim_{\mathcal N(\Gamma)}(\ker T) \; \in \; \sum \left\{ \frac1{|H|} \Z \biggm| H \;\; \text{is a finite subgroup of} \;\; \Gamma \right\},$$

\noindent where $\ker T$ denotes the kernel of the operator
$l^2(\Gamma )^m\to l^2(\Gamma)^n$ given by right mutiplication by
$T$ (see \cite[page 369]{luck}). While the conjecture has been
disproved (see \cite{GZ} and \cite{GLSZ}), and even examples have
been found where $\dim_{\mathcal N(\Gamma)}(T)$ is irrational (see
\cite{austin}, \cite{LW}, \cite{PSZ}, \cite{Grabowski},
\cite{Grab2}), thus giving a negative answer  to the question
originally posed by Atiyah, many questions about the values of von
Neumann dimensions over group algebras remain. Such questions are
closely linked to the structure of projections in $\mathcal
N(\Gamma)$ and the structure of certain $*$-regular subalgebras of
$\mathcal U(\Gamma)$.

To allow for general coefficients, let $k$ be a subfield of $\C$
closed under conjugation. The \emph{$*$-regular closure} $\mathcal R
(k\Gamma , \mathcal U (\Gamma ))$ of the group algebra $k\Gamma $ in
$\mathcal U (\Gamma )$ is defined as the smallest $*$-regular
subring of $\mathcal U (\Gamma )$ containing $k\Gamma $ (\cite{LS}).
The Strong Atiyah Conjecture can be reformulated in terms of the
ranks of the matrices over $k \Gamma $, seen as matrices over the
$*$-regular rank ring $\mathcal R (k\Gamma , \mathcal U (\Gamma ))$
(see Section \ref{sect:Atiyah} for the definitions). In many
instances, $\mathcal R (k\Gamma , \mathcal U (\Gamma ))$ is an Ore
localization of $k \Gamma $, see for instance \cite{LinnD} or
\cite{luck}. However, this is not the case for the lamplighter group
$G:=\Z _2 \wr \Z$ (see \cite{LLS}), and it is an interesting open
problem to determine exactly the structure of $\mathcal R :=
\mathcal R (k G , \mathcal U ( G ))$, especially for $k=\Q$. Observe
that all the kernel and range projections of elements in $k G$ are contained 
in $\mathcal R $, and therefore a
detailed knowledge of the structure of the lattice of projections of
this $*$-regular algebra will greatly help in the problem of
completely determining the set of values taken by the von Neumann
dimensions of the elements in $k G$.

We solve these problems for a certain subalgebra $A$ of $k G$, thus
uncovering a portion of the structure of the more elusive algebra
$\mathcal R $. Denote by $t$ the generator corresponding to $\Z$ and
by $a_i$ the generator corresponding to the $i$-th copy of $\Z _2$
in the wreath product $G = (\oplus_{\Z}\, \Z_2)\rtimes \Z$. Let
$e_0:=\frac{1}{2}(1+a_0)$ be the averaging idempotent corresponding
to $a_0$. We consider the $*$-subalgebra $A$ of $k G$ generated by
$s: = e_0t$. Notice that $A$ contains the element $T:=s+s^*$ (twice
the Markov operator on $G$), which is the element first considered
in \cite{GZ} and \cite{DS} in order to give a negative answer to the
Strong Atiyah Conjecture. We prove that this subalgebra is
$*$-isomorphic to the algebra $k[\mathcal F]$ of the monogenic free
inverse monoid, thus establishing a link with the theory developed
in the previous sections of the paper. Using this fundamental
observation, we are able to completely determine the structure of
the $*$-regular closure $\mathcal E := \mathcal R (A , \mathcal U (G
))$, see Theorem \ref{thm:regclosureA}. In particular, we obtain
that $\mathcal E$ is also the division closure of $A$. Note that
$\mathcal E \subseteq \mathcal R $, so we uncover part of the
structure of the $*$-regular algebra $\mathcal R$. In particular,
this sheds some light  on Linnell's question (\cite[Problem
4.4]{Linn-LMS}) whether the rational closure of $kG$ is a regular
ring, showing that at least this is the case for the $*$-subalgebra
$A$ of $kG$. As an additional piece of information, we mention that
the rank closure of the $*$-algebra $\mathcal R$ has been computed
in \cite{elek2}.

For a subring $S$ of $\mathcal U (G)$, denote by $\mathcal C (S)$ the subset of $\mathbb R^+$ consisting of all the von Neumann dimensions of matrices over $S$,
and by $\mathcal G (S)$ the additive subgroup of $\mathbb R$ generated by $\mathcal C (S)$. Then, using
our structural results, we show that $\mathcal C (\mathcal E) =\Q^+$ and that
$\mathcal G (A) = \Q$, so that we have $\Q \subsetneq \mathcal G (k G) $ (Corollary \ref{cor:QplusiscE}),
where the fact that the containment is strict follows from \cite{Grab2}.

\section{Background and preliminaries}\label{sect:prelims}

We collect here background definitions, concepts, and results needed below.

 \subsection{Commutative monoids} Let $M$ be a commutative monoid, written additively. We say that $M$ is \emph{conical} if $(x+y=0 \implies x=y=0)$ for any $x,y\in M$, and that $M$ is \emph{stably finite} if $(x+y=x \implies y=0)$ for any $x,y \in M$. It is \emph{separative} provided $(x+x= x+y= y+y \implies x=y)$ for any $x,y \in M$. In case $M$ is conical, an element $x\in M$ is called \emph{irreducible} provided $x\ne 0$ and $(a+b=x \implies a=0 \;\; \text{or} \;\; b=0)$ for any $a,b \in M$. The \emph{pedestal} of $M$ is the submonoid $\ped(M)$ generated by the irreducible elements of $M$.

The \emph{algebraic ordering} on $M$ is the translation-invariant pre-order $\le$ given by existence of subtraction: $(x\le y \iff y= x+a \;\; \text{for some} \;\; a \in M)$. An \emph{order-unit} in $M$ is any element $u$ such that every element of $M$ is bounded above by some nonnegative multiple of $u$. A pair of elements $x,y \in M$ need not have a greatest lower bound in $M$, but when a greatest lower bound exists, it will be denoted by $x\wedge y$. The monoid $M$ is \emph{unperforated} if $(nx\le ny \implies x\le y)$ for any $x,y \in M$ and $n\in \N$.

To say that $M$ is a \emph{refinement monoid} means that $M$ satisfies the \emph{Riesz refinement property}: given any $x_1,x_2,y_1,y_2 \in M$ with $x_1+x_2= y_1+y_2$, there exist $z_{ij} \in M$ such that $z_{i1}+z_{i2} = x_i$ for $i=1,2$ and $z_{1j}+z_{2j} = y_j$ for $j=1,2$.

An \emph{o-ideal} in $M$ is any submonoid $I$ which is hereditary with respect to the algebraic ordering, i.e., $(x+y\in I \implies x,y\in I)$ for all $x,y\in M$. Given an o-ideal $I$, we define a congruence $\equiv_I$ on $M$ by $(x \equiv_I y \iff x+a= y+b \;\; \text{for some} \;\; a,b \in I)$, and we denote the quotient monoid $M/{\equiv_I}$ by $M/I$.  Such quotients are always conical. If $M$ is a refinement monoid, so is $M/I$ \cite[p.~476]{Dob}.

In a conical refinement monoid, irreducible elements $x$ always cancel from sums: $(x+a= x+b \implies a=b)$ \cite[Lemma 1.1]{TW}, and the pedestal is an o-ideal \cite[Proposition 1.2]{TW}.


\subsection{Rings and fields}
We refer the reader to \cite{vnrr} for the general theory of von
Neumann regular rings, and to \cite{lam} for the general theory of rings. In particular, Chapters 4 and 16 of \cite{vnrr} provide the 
necessary background on unit-regular rings and (pseudo-)rank functions, respectively.
We shall need the concepts of
the socle $\soc (R)$ and the second socle $\soc _2 (R)$ of a
ring $R$. The right socle of $R$ is the sum of all
the minimal right ideals of $R$. A similar definition gives the left
socle of $R$. Both the left socle and the right socle of $R$ are (two-sided) ideals.
If $R$ is a semiprime ring, then its right and left
socles coincide (\cite[p. 186]{lam}), and this ideal is denoted by $\soc (R)$.
Moreover, if $R$ is semiprime, all the minimal right or left ideals are
generated by idempotents $e$ such that $eRe$ is a division ring (\cite[Corollary 10.23]{lam}). 
If $R$ and $R/\soc (R)$ are semiprime, the
second socle $\soc _2 (R)$ is defined by the property that $\soc_2(R)/\soc
(R)=\soc (R/\soc (R))$. (Of course, there are left and right versions of the second socle 
when $R$ or $R/\soc (R)$ is not semiprime.)

We will use (commutative) fields of three different kinds, and we
will use different notations for them. We will use
 $k$ to denote a subfield of $\mathbb C$ closed under complex conjugation (Section \ref{sect:Atiyah}),
 $F$ to denote a countable field (Section \ref{sect:realizMbyvnrrr}),
and $K$ to denote either a general field or a field which is not
necessarily of the above forms.


 \subsection{V-monoids} For a unital ring $R$, the \emph{V-monoid} $V(R)$ is the set of all isomorphism classes $[P]$ of finitely generated projective left $R$-modules $P$,
 equipped with the addition operation induced from direct sum: $[P]+ [Q] := [P\oplus Q]$. We work instead with the alternative \emph{idempotent picture} of $V(R)$, which consists
 of writing it as the set of equivalence classes $[e]$ of idempotent matrices $e$ over $R$, with $[e]+ [f] := \left[ \left( \begin{smallmatrix} e&0\\ 0&f \end{smallmatrix} \right) \right]$.
 (Recall that idempotent matrices $e$ and $f$ are \emph{equivalent}, written $e \sim f$, if there exist rectangular matrices $u$ and $v$ such that $uv=e$ and $vu=f$.)

In the non-unital case, $V(R)$ can be defined either via the idempotent picture (exactly as in the unital case), or via the following \emph{projective picture}: choose a unital ring $\widetilde{R}$ that
contains $R$ as a two-sided ideal, and define $V(R)$ to be the monoid of isomorphism classes of those finitely generated projective left $\widetilde{R}$-modules $P$ for  which $RP=P$, with addition
induced from direct sum. (See \cite[\S5.1]{GZane} for details.) For any ideal $I$ in a ring $R$, the monoid $V(I)$ is naturally isomorphic to a submonoid of $V(R)$, and we identify $V(I)$ with its image in $V(R)$.

A unital ring $R$ is an \emph{exchange ring} provided that for each $a\in R$, there is an idempotent $e\in aR$ such that $1-e\in (1-a)R$. (This is equivalent to Warfield's original definition \cite{War}
by \cite[p.~167]{GW} or \cite[Theorem 2.1]{nichol}, and it is left-right symmetric by \cite[Corollary 2]{War}.) In the non-unital case, $R$ is an exchange ring if for each $a\in R$, there exist
an idempotent $e\in R$ and elements $r,s\in R$ such that $e = ra = a+s-sa$ \cite{Aext}. All regular rings are exchange rings (\cite[Theorem 3]{War}, \cite[Example (3)]{Aext}),
as are all C*-algebras with real rank zero (\cite[Theorem 7.2]{AGOP}, \cite[Theorem 3.8]{Aext}). Whenever $R$ is an exchange ring, $V(R)$ is a refinement
monoid (\cite[Corollary 1.3]{AGOP}, \cite[Proposition 1.5]{Aext}). Moreover, if $R$ is an exchange ring and $I$ an ideal of $R$, then $V(I)$ is an o-ideal of $V(R)$
and $V(R)/V(I) \cong V(R/I)$ (this was proved for $R$ unital in \cite[Proposition 1.4]{AGOP}, but the same proof works in general).

If $R$ is a semiprime exchange ring, then $V(\soc (R))= \ped (V(R))$, where $\soc{R}$ denotes the socle of $R$.


 \subsection{Universal localization}   We will use the theory of universal localization, although only for elements of a ring. If $\Sigma $ is a subset of a ring $R$, we denote by $\Sigma ^{-1}R$ the ring obtained by universally adjoining to $R$ inverses for the elements of $\Sigma$. This can be formalized by using a construction with generators and relations.
 See \cite{Cohn} and \cite{schofield} for the general case, where square matrices and homomorphisms between finitely generated projective modules are considered, respectively.
 There is a canonical ring homomorphism $\iota_{\Sigma}\colon R\to \Sigma^{-1}R$, and $\Sigma^{-1}R$ is generated as a ring by $\iota_{\Sigma}(R)$ and
 the elements $\iota_{\Sigma}(x)^{-1}$ for $x\in \Sigma$. The universal localization is characterized by its \emph{universal property\/}: given any ring homomorphism
 $\psi \colon R\to S$ such that $\psi(x)$ is invertible for every $x\in \Sigma$, there exists a unique homomorphism $\widetilde{\psi}\colon \Sigma^{-1}R\to S$ such that $\psi=
 \widetilde{\psi}\circ \iota _{\Sigma}$.

 If $I$ is an ideal of $R$, we will denote by $\Sigma^{-1}I$ the ideal of $\Sigma^{-1}R$ generated by $\iota_{\Sigma}(I)$.
 By \cite[Proposition 4.3]{DPP}, we have $\Sigma^{-1}R/(\Sigma^{-1}I)\cong \ol{\Sigma}^{-1}(R/I)$, where $\ol{\Sigma}$ denotes the
 image of $\Sigma$ under the canonical projection $R\to R/I$. We will use this fact in the sequel without explicit reference.

 If $R$ is an algebra over a field $K$, then the universal localization $\Sigma^{-1}R$ is also a $K$-algebra, and it satisfies the corresponding
 universal property in the category of $K$-algebras.

 \subsection{Separated graphs}
A \emph{separated graph}, as defined in \cite[Definition 2.1]{AG},  is a pair $(E,C)$ where $E$ is a directed graph,  $C=\bigsqcup
_{v\in E^0} C_v$, and
$C_v$ is a partition of $s^{-1}(v)$ (into pairwise disjoint nonempty
subsets) for every vertex $v$. (In case $v$ is a sink, we take $C_v$
to be the empty family of subsets of $s^{-1}(v)$.) The pair $(E,C)$ is called \emph{finitely separated} provided all the members of $C$ are finite sets, and it is \emph{non-separated} if $C_v = \{s^{-1}(v)\}$ for all non-sinks $v\in E^0$.

The \emph{Leavitt path algebra of the
separated graph} $(E,C)$ with coefficients in the field $K$ is the
$K$-algebra $L_K(E,C)$ with generators $\{ v, e, e^*\mid v\in E^0,\
e\in E^1 \}$, subject to the following relations:
\begin{enumerate}
\item[] (V)\ \ $vv^{\prime} = \delta_{v,v^{\prime}}v$ \ for all $v,v^{\prime} \in E^0$ ,
\item[] (E1)\ \ $s(e)e=er(e)=e$ \ for all $e\in E^1$ ,
\item[] (E2)\ \ $r(e)e^*=e^*s(e)=e^*$ \ for all $e\in E^1$ ,
\item[] (SCK1)\ \ $e^*e'=\delta _{e,e'}r(e)$ \ for all $e,e'\in X$, $X\in C$, and
\item[] (SCK2)\ \ $v=\sum _{ e\in X }ee^*$ \ for every finite set $X\in C_v$, $v\in E^0$.
\end{enumerate}
Note that, for any given involution on $K$, there is a canonical involution $*$ on $L_K(E)$ making it a $*$-algebra.
This involution fixes the vertices and sends $e$ to $e^*$, for $e\in E^1$.

We only give the definition of the \emph{graph monoid} $M(E,C)$ associated to a finitely separated graph $(E,C)$. This is the commutative monoid presented by the set of generators $E^0$ and the relations
$$v = \sum_{e \in X} r(e) \qquad \text{for all} \; v \in E^0 \; \text{and} \; X \in C_v \,.$$

Lemma 4.2 of \cite{AG} shows that $M(E,C)$ is conical, and that it is nonzero as long as $E^0$ is nonempty.
Otherwise, $M(E,C)$ has no special properties, in contrast to the fact that, in the non-separated case, $M(E,C)$ is a separative unperforated refinement monoid (\cite[Propositions 4.4, 4.6, Theorem 6.3]{AMP}, \cite[Corollary 5.16]{AG}, \cite[Theorem 3.1]{TW}).

\begin{theorem}  \label{MECarb}
{\rm\cite[Proposition 4.4]{AG}} Any conical commutative monoid is isomorphic to $M(E,C)$ for a suitable finitely separated graph $(E,C)$.
\end{theorem}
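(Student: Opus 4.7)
The plan is to realize $M$ as $M(E,C)$ via a direct, if large, construction in which each nonzero element of $M$ becomes a vertex and each nontrivial additive identity in $M$ becomes a block of the partition $C$. Explicitly, I would set $E^0 := M \setminus \{0\}$, and for each vertex $v$ and each way of writing $v = w_1 + \cdots + w_n$ in $M$ with $n \geq 2$ and each $w_i \in E^0$, I would introduce a fresh block $X = \{e_1,\ldots,e_n\}$ of edges with $s(e_i) = v$ and $r(e_i) = w_i$, and place $X$ into $C_v$. Vertices admitting no such decomposition become sinks, with $C_v = \emptyset$ by convention. The resulting graph $(E,C)$ is finitely separated because each individual block $X$ is finite, even though $C_v$ itself may be infinite.

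By construction, each defining relation $v = \sum_{e \in X} r(e)$ of $M(E,C)$ is a literal identity in $M$, so sending each generator $v \in E^0$ to the corresponding element of $M$ extends to a well-defined monoid homomorphism $\pi \colon M(E,C) \to M$, manifestly surjective. For injectivity, I would pass to the free commutative monoid $F := \N^{(E^0)}$ and show that any two elements $\alpha, \beta \in F$ with the same image $m$ in $M$ are already congruent modulo the relations $v = \sum_{e \in X} r(e)$ from $C$. Here conicality is crucial: since $0 \notin E^0$, a sum $\alpha \in F$ can map to $0$ in $M$ only if $\alpha$ is the empty sum in $F$. So either $\alpha = \beta = 0$, or the common image $m$ lies in $E^0$; in the latter case the equalities $m = \alpha$ and $m = \beta$ in $M$ are themselves among the chosen generating relations (with a trivial adjustment when $\alpha$ or $\beta$ has length one, as then the generator in question coincides with $m$ as a vertex).

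The substantive point, and the only real obstacle, is this injectivity step, and it reduces to a single observation: because $M$ is conical, no identity of the forbidden shape $v = 0$ is ever needed, and every equality between two formal sums of nonzero elements of $M$ is a consequence of identities of the form ``one vertex equals a nonempty sum of vertices'', which are exactly the relations furnished by the partition $C$. The rest of the verification—well-definedness of $\pi$, its surjectivity, and the formal congruence argument—is routine bookkeeping.
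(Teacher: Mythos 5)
Your construction is correct, and it is essentially the argument behind the cited result \cite[Proposition 4.4]{AG} (the paper itself only quotes that proposition without reproducing the proof): one vertex per nonzero element of $M$, one finite block of edges per identity $v=w_1+\cdots+w_n$, with conicality guaranteeing that every formal sum mapping to a nonzero $m$ is directly congruent to the generator $[m]$ via a single defining relation. No gaps.
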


 \subsection{The monoids $\calM$ and $\Mbar$} \label{subs:MandMbar}
 The monoids of the three separated graphs introduced in \cite[Section 4]{TW} are labeled here in the same way:
$$\calM_0:= M(E_0,C^0), \qquad\quad \calM := M(E,C), \qquad\quad \Mbar := M(\Ebar,\Cbar).$$
The first of these has the monoid presentation
$$\calM_0 = \langle x_0, \, y_0, \, z_0 \mid x_0+y_0 = x_0+z_0 \rangle.$$

We can present $\calM$ by the generators
$$x_0,y_0,z_0,a_1,x_1,y_1,z_1,a_2,x_2,y_2,z_2,\dots$$
and the relations
\begin{equation}
\begin{gathered}
x_0+y_0 = x_0+z_0\,,\qquad\qquad  y_l = y_{l+1}+a_{l+1}\,,\qquad\qquad  z_l = z_{l+1}+a_{l+1}\,,  \\
x_l = x_{l+1}+y_{l+1} =x_{l+1}+z_{l+1}\,.
\end{gathered}  \label{MECrelns}
\end{equation}
By \cite[Proposition 5.9 or Theorem 8.9]{AG}, $\calM$ is a refinement monoid. A direct proof of this is given in \cite[Proposition 4.4]{TW}.
The canonical order-unit in $\calM_0$ and $\calM$ is $u :=x_0+y_0$.

The monoid $\Mbar$ can be presented by the generators
$$x_0,y_0,z_0,x_1,y_1,z_1,x_2,y_2,z_2,\dots$$
and the relations
\begin{equation}
\begin{gathered}
x_0+y_0 = x_0+z_0\,,\qquad\qquad  y_l = y_{l+1}\,,\qquad\qquad  z_l = z_{l+1}\,,  \\
x_l = x_{l+1}+y_{l+1} =x_{l+1}+z_{l+1}\,.
\end{gathered}  \label{Mbarrels}
\end{equation}
The generators $y_n$ and $z_n$ for $n>0$ are redundant, and we write the remaining generators with overbars to avoid confusion between $\Mbar$ and $\calM$. Thus,
putting $\ybar = \ybar_n$ and $\zbar = \zbar_n$ for all $n\ge 0$, we have that $\Mbar$ is presented by the generators
$$\xbar_0,\ybar,\zbar,\xbar_1,\xbar_2,\dots$$
and the relations
\begin{equation}  \label{relnsMbar}
\xbar_0+\ybar= \xbar_0+\zbar\,, \qquad\qquad \xbar_l= \xbar_{l+1}+\ybar= \xbar_{l+1}+\zbar \,.
\end{equation}
By \cite[Lemmas 4.8 and 4.9]{TW},  $\Mbar$ is the quotient of $\calM$ modulo the pedestal of $\calM$, which is precisely the o-ideal generated
by $a_1,a_2, a_3,\dots $. In particular, $\Mbar$ is a conical refinement monoid.

\section{Bergman-Menal-Moncasi type rings}

In this section, we analyze and construct regular rings realizing the monoid $\Mbar$. The regular rings constructed by Bergman in \cite[Example 5.10]{vnrr} and by Menal and Moncasi in \cite[Example 2]{MM} realize $\Mbar$, but we shall need an example satisfying a suitable universal property.

The basic structure of a unital regular ring $B$ which realizes $\Mbar$ is easily described, as follows. Assume that we have an isomorphism
$$\phi: (V(B),[1]) \rightarrow (\Mbar,\ubar),$$
meaning a monoid isomorphism $V(B) \rightarrow \Mbar$ that maps $[1]$ to $\ubar$.
Since $\ybar$ and $\zbar$ are distinct irreducible elements in $\Mbar$ (\cite[Lemma 4.14(a)]{TW}), there are inequivalent primitive idempotents $e_1,e_2\in B$ such that $\phi([e_1]) = \ybar$ and $\phi([e_2]) = \zbar$.
Moreover, since $\ubar=\xbar+\ybar=\xbar+\zbar$, we can also assume that $\phi([1-e_1])= \phi([1-e_2]) = \xbar$, so that $1-e_1 \sim 1-e_2$.
Further, $\ybar$ and $\zbar$ generate the pedestal of $\Mbar$ (\cite[Lemma 4.14(e)]{TW}), and $n\ybar,n\zbar\le \ubar$ for all $n\in\N$. It follows that $\soc(B) = Be_1B \oplus Be_2B$ and each $(Be_iB)_B$ is an infinite direct sum of copies of $e_iB$. Hence, $(Be_iB)_B$ is not finitely generated. Finally, since $\Mbar/\ped \Mbar \cong \Zplus$ with the class of $\ubar$
mapping to $1$ (\cite[Lemma 4.14(c)]{TW}), we conclude that $B/\soc(B)$ must be a division ring.

We next look at regular rings whose $V$-monoids have the kind of structure just described.

\begin{lemma}  \label{VMMstructure}
Let $S$ be a unital regular ring such that $J:= \soc(S) \ne 0$, no homogeneous component of $J$ is finitely generated {\rm(}as a right or left ideal of $S${\rm\/)},
and $S/J$ is a division ring. Let $\{ e_i\mid i\in I \}$ be a complete, irredundant set of representatives for the equivalence classes of primitive idempotents in $S$.

{\rm (a)} The $[e_i]$ for $i\in I$ are distinct irreducible elements in $V(S)$.

{\rm (b)} The map $\sigma: (\Zplus)^{(I)} \rightarrow V(S)$ given by $\sigma(f)= \sum_{i\in I} f(i) [e_i]$ is an isomorphism
of $(\Zplus)^{(I)}$ onto $\ped V(S)$, which is an ideal of $V(S)$.

{\rm (c)} For each $f\in (\Zplus)^{(I)}$, there is a unique $c(f)\in V(S)$ such that $c(f)+ \sigma(f)= [1]$.

{\rm (d)} $V(S)$ is generated by $\{ [e_i] \mid i\in I\} \cup \{ c(f)\mid f\in (\Zplus)^{(I)} \}$.
\end{lemma}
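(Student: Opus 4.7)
The plan is to handle parts (a)--(d) in order, leveraging classical module theory of the socle together with the identity $V(\soc R) = \ped V(R)$ for semiprime exchange rings and the fact that irreducibles cancel in conical refinement monoids (both recorded in the preliminaries). Note that $S$ is semiprime because it is regular, so these facts apply.

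For (a), I first observe that each $e_iSe_i$ is a regular ring with no nontrivial idempotents, hence a division ring, so $Se_i$ is a minimal left ideal and therefore a simple left $S$-module. A decomposition $[e_i] = [p] + [q]$ in $V(S)$ would split $Se_i$, forcing one summand to be zero; hence $[e_i]$ is irreducible. Distinctness is immediate from the choice of representatives. Conversely, any irreducible in $V(S)$ represents a simple finitely generated projective left $S$-module, which is isomorphic to $Se$ for some primitive idempotent $e$, hence equivalent to some $Se_i$. So the irreducibles of $V(S)$ are precisely $\{[e_i] : i \in I\}$, a fact I will re-use in (b).

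For (b), the preliminaries give $\ped V(S) = V(J)$ with $J = \soc(S)$, and this is an o-ideal. Since $J$ decomposes as a direct sum of its homogeneous components $H_i$ (each an ideal of $S$), finitely generated projective modules supported on $J$ admit Krull--Schmidt decompositions into simples of the form $Se_i$, giving an isomorphism $(\Zplus)^{(I)} \cong V(J)$ which is precisely $\sigma$. (Injectivity of $\sigma$ also follows abstractly from (a), since distinct irreducibles cancel and thus generate a free commutative submonoid.) For (c), I construct $c(f)$ by exhibiting an idempotent $p \le 1$ in $S$ with $[p] = \sigma(f)$. Since each $H_i$ is not finitely generated, it contains an infinite orthogonal family of minimal left ideals isomorphic to $Se_i$, so for each $i$ in the finite support of $f$ I select $f(i)$ orthogonal primitive idempotents inside $H_i$ each equivalent to $e_i$. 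Idempotents chosen from different homogeneous components are automatically orthogonal: Schur's lemma applied to the non-isomorphic simples $Se, Se'$ yields $eSe' = 0$, hence $ee'=0$. Summing all the chosen primitive idempotents produces an idempotent $p$ with $[p] = \sigma(f)$, so we set $c(f) := [1-p]$. Uniqueness is immediate from cancellation of the irreducibles appearing in $\sigma(f)$.

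Part (d) is where the main technical work lies. Using the isomorphism $V(S)/\ped V(S) \cong V(S/J) = \Zplus$ with $[1] \mapsto 1$, any $[q] \in V(S)$ maps to some $n \ge 0$, giving an identity $[q] + \sigma(h) = n[1] + \sigma(g)$ in $V(S)$ for some $g,h \in (\Zplus)^{(I)}$. Applying Riesz refinement to this identity and using that the pedestal is an o-ideal (so any summand of a pedestal element remains in the pedestal), I extract a decomposition $[q] = x + \sigma(g')$ with $x + \sigma(h') = n[1]$. A second induction on $n$, splitting off one copy of $[1]$ at each step by refining $x + \sigma(h') = [1] + (n-1)[1]$, produces $x = c(h'_1) + \cdots + c(h'_n)$, where the first summand uses the defining property in (c) and the remaining ones come from the inductive hypothesis. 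This exhibits $[q]$ as a sum of $[e_i]$'s and $c(\cdot)$'s. The main obstacle is ensuring that at each application of refinement the ``error'' summands stay in the pedestal; this is automatic from the o-ideal property but must be tracked through every step of the induction.
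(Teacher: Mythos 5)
Your argument is correct. Parts (a)--(c) run essentially parallel to the paper's proof: irreducibility of the $[e_i]$ from primitivity (via $e_iSe_i$ being a division ring, so that $Se_i$ is simple in the semiprime ring $S$), the identification of $\ped V(S)$ with $V(J)\cong (\Zplus)^{(I)}$, and the construction of $c(f)$ by packing $f(i)$ pairwise orthogonal copies of $e_iS$ into each infinitely generated homogeneous component and setting $c(f)=[1-p]$ --- this is exactly the paper's construction, phrased with orthogonal idempotents instead of independent right ideals. Where you genuinely diverge is part (d). The paper's route is ring-theoretic and very short: since $S$ is regular, $V(S)$ is generated by classes $[p]$ of idempotents $p\in S$ itself (not just idempotent matrices), and because $S/J$ is a division ring one has the dichotomy $p\in J$ (so $[p]=\sigma(f)$) or $1-p\in J$ (so $[p]=c(f)$ by the uniqueness in (c)). Your route stays entirely inside the monoid: you push $[q]$ down to $V(S/J)\cong\Zplus$, then use Riesz refinement together with the o-ideal property of the pedestal to write $[q]=x+\sigma(g')$ with $x+\sigma(h')=n[1]$, and peel off copies of $[1]$ one at a time, identifying each piece as some $c(h'_j)$ via the uniqueness clause of (c). This is more work but buys generality: it uses only that $V(S)$ is a conical refinement monoid whose pedestal is freely generated by irreducibles, that $V(S)/\ped V(S)\cong\Zplus$ with $[1]\mapsto 1$, and property (c); in particular it would survive in an exchange-ring setting where one cannot reduce to single idempotents of $S$. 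The one step worth making explicit in your write-up is the base case $n=0$ of the inner induction, where conicality forces $x=0$; with that noted, the induction closes cleanly.
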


\begin{proof} Since $J_S$ is not finitely generated, it cannot be a direct summand of $S_S$. This, together with the hypothesis that $J$ is a maximal right ideal of $S$, forces $J_S$ be essential in $S_S$.

(a) Primitivity implies the $[e_i]$ are irreducible. Irredundancy implies that $e_i \nsim e_j$ for all $i\ne j$, so the $[e_i]$ are distinct.

(b) Any irreducible element in $V(S)$ equals $[e]$ for some primitive idempotent $e\in S$. Then $e\sim e_i$ for some $i$, due to the essentiality of $J_S$ in $S_S$. This shows that the family $([e_i])_{i\in I}$
consists of all the irreducible elements of $V(S)$, so part (b) follows from \cite[Proposition 1.2]{TW}.

(c) By assumption, each homogeneous component $Se_iS$ of $J$ is infinitely generated as a right (say) ideal,
so $\aleph_0(e_iS) \lesssim S_S$. Thus, given $f\in (\Zplus)^{(I)}$, there exist right ideals $A_i \le S_S$ such
that $A_i\cong f(i)(e_iS)$ for all $i$. These $A_i$ are independent (because the simple modules $e_iS$ are pairwise
nonisomorphic), so $A := \sum_{i\in I} A_i \le S_S$ is a direct sum and $A\cong \bigoplus_{i\in I} f(i)(e_iS)$.
Since the support of $f$ is finite, $A$ is finitely generated, and there is an idempotent $p\in S$ such that $pS= A$.
Consequently, $[p]= \sigma(f)$, and $c(f) := [1-p]$ is an element of $V(S)$ such that $c(f)+ \sigma(f)= [1]$. Uniqueness follows because
irreducible elements cancel from sums (\cite[Lemma 1.1]{TW}).

(d) Refinement (or regularity) implies $V(S)$ is generated by the classes $[p]$ for idempotents $p\in S$. If $p\in J$, then $pS \cong \bigoplus_{i\in I} f(i)(e_iS)$ for some $f\in (\Zplus)^{(I)}$,
and then $[p]= \sum_{i\in I} f(i) [e_i]$. If $p\notin J$, then $1-p\in J$ because $S/J$ is a division ring. From the previous argument, $[1-p]= \sum_{i\in I} f(i) [e_i]= \sigma(f)$ for some $f\in (\Zplus)^{(I)}$.
Consequently, $[p]+\sigma(f)= [1]$, so $[p]= c(f)$ by the uniqueness of $c(f)$.
\end{proof}

\begin{lemma}  \label{VMMstructure2}
Continue with the hypotheses of Lemma {\rm\ref{VMMstructure}}. Assume that $S$ is stably finite, $I=\{1,2\}$, and $1-e_1\sim 1-e_2$ in $S$. Then there is an isomorphism $\psi: \Mbar\rightarrow V(S)$ such that
$$\psi(\ybar)= [e_1],\qquad \psi(\zbar)= [e_2],\qquad \psi(\xbar _0)= [1-e_1].$$
\end{lemma}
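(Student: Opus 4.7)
The plan is to define $\psi$ by choosing images for the generators of $\Mbar$, verify the defining relations so that a well-defined monoid homomorphism is obtained, and then establish bijectivity. Surjectivity will follow quickly from Lemma~\ref{VMMstructure}(d); injectivity will be the main obstacle.

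First I would assign $\psi(\ybar) := [e_1]$, $\psi(\zbar) := [e_2]$, and $\psi(\xbar_0) := [1-e_1]$. The hypothesis $1-e_1 \sim 1-e_2$ yields $[1-e_1]=[1-e_2]$ in $V(S)$, so $\psi(\xbar_0)+\psi(\ybar)=[1-e_1]+[e_1]=[1]=[1-e_2]+[e_2]=\psi(\xbar_0)+\psi(\zbar)$, which realizes the relation $\xbar_0+\ybar=\xbar_0+\zbar$ and establishes the invariant $\psi(\xbar_0)+[e_1]=\psi(\xbar_0)+[e_2]$. Next I would inductively define $\psi(\xbar_l)$ for $l\ge 1$, maintaining this invariant: applying Riesz refinement to $\psi(\xbar_{l-1})+[e_1]=\psi(\xbar_{l-1})+[e_2]$ and exploiting that $[e_1]$ and $[e_2]$ are irreducible and distinct (Lemma~\ref{VMMstructure}(a)), the $(2,2)$-entry of the refinement matrix is forced to vanish, producing $a\in V(S)$ with $a+[e_1]=a+[e_2]=\psi(\xbar_{l-1})$. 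Setting $\psi(\xbar_l):=a$ realizes $\xbar_{l-1}=\xbar_l+\ybar=\xbar_l+\zbar$ and propagates the invariant, giving a well-defined monoid homomorphism $\psi:\Mbar\to V(S)$.

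For surjectivity, iterating the recursion shows $\psi(\xbar_l)+(k+1)[e_1]+(l-k)[e_2]=[1]$ for each $0\le k\le l$, so $\psi(\xbar_l)=c(k+1,l-k)$ for every such $k$. Hence every $c(f)$ with $|f|\ge 1$ lies in the image, and $c(0,0)=[1]=\psi(\xbar_0+\ybar)$. Together with $[e_1]=\psi(\ybar)$ and $[e_2]=\psi(\zbar)$, Lemma~\ref{VMMstructure}(d) yields surjectivity. Restricted to $\ped\Mbar=\Zplus\ybar\oplus\Zplus\zbar$, the map $\psi$ is plainly injective, matching the free basis of $\ped\Mbar$ with the free basis of $\ped V(S)$.

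The hard part will be injectivity on elements of positive rank. The plan is to construct a would-be inverse $\phi:V(S)\to\Mbar$ on the generating set of Lemma~\ref{VMMstructure}(d) via the assignments $[e_1]\mapsto\ybar$, $[e_2]\mapsto\zbar$, $c(f)\mapsto\xbar_{|f|-1}$ for $|f|\ge 1$, and $c(0,0)\mapsto\ubar$, and then to verify that $\phi$ respects addition. This amounts to checking that every relation among these generators in $V(S)$ is a consequence of the defining relations of $\Mbar$: the stable finiteness of $S$ will supply the cancellation of $[1]$ against elements of $\ped V(S)$, and the freeness of $\ped V(S)$ will control the pedestal coordinates. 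Once $\phi$ is shown well-defined, the composition $\phi\circ\psi$ is the identity on the generators of $\Mbar$, so $\psi$ is injective. The crux of the argument, which I expect to be the main technical obstacle, is verifying that the refinement-based choices made for $\psi(\xbar_l)$ do not introduce identifications in $V(S)$ beyond those already present in $\Mbar$, so that $\phi$ is genuinely compatible with the full monoid structure of $V(S)$.
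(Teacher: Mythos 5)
Your construction of $\psi$ and your surjectivity argument are essentially sound and run parallel to the paper's: the paper defines the images of the $\xbar_n$ as the elements $c_n=c(n+1,0)$ from Lemma \ref{VMMstructure}(c) and verifies the relations by cancelling the irreducible $[e_1]$, while you extract them by Riesz refinement together with irreducibility of $[e_1],[e_2]$; both routes are valid, and your identification $\psi(\xbar_l)=c(a,b)$ for $a+b=l+1$ feeds into Lemma \ref{VMMstructure}(d) exactly as in the paper (you should just also record the case $a=0$, i.e.\ $\psi(\xbar_{j-1})=c(0,j)$, so that \emph{all} generators $c(f)$ are covered -- an immediate extension of your recursion).

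The genuine gap is injectivity: you do not prove it. Your plan is to define an inverse $\phi$ on the generators of Lemma \ref{VMMstructure}(d) and ``verify that $\phi$ respects addition,'' and you explicitly leave the crux unresolved. That verification is not a routine check; well-definedness of $\phi$ requires knowing \emph{every} additive relation in $V(S)$ among $[e_1]$, $[e_2]$ and the $c(f)$, which is precisely the content of injectivity of $\psi$, so as stated the plan is circular. What is actually needed (and what the paper supplies) is: (i) a normal form in $\Mbar$ -- any element outside $\Zplus\ybar+\Zplus\zbar$ can be written as $r\xbar_n+s\ybar$ with $r\ge 1$ (\cite[Lemma 4.11(b)]{TW}); (ii) composing $\psi$ with $V(S)\to V(S/J)\cong\Zplus$, using that $S/J$ is a division ring, to force equality of the coefficients $r$; (iii) the relations $\xbar_n=\xbar_{n+i}+i\ybar$ to bring two such elements to a common index $n$; (iv) stable finiteness of $S$ to cancel the $r$ copies of an idempotent representing $\psi(\xbar_n)$ and conclude $s=s'$; and (v) the observation that $\psi$ cannot identify a pedestal element with an element involving some $\xbar_m$, since $[1]=\psi(\xbar_m)+(m+1)[e_1]$ would then lie below an element of $\ped V(S)=V(J)$, impossible because $J$ is not finitely generated as a one-sided ideal. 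Note also that injectivity on $\ped\Mbar$ plus injectivity ``of the rest'' does not combine automatically -- the mixed comparisons in (v) and (i)--(iv) are where stable finiteness and the division-ring quotient are genuinely used -- so without an argument of this kind your proof is incomplete.
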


\begin{proof} Note that $\ped V(S)$ is generated by the irreducible elements $a_1= [e_1]$ and $a_2= [e_2]$. Identify $(\Zplus)^{(I)}$ with $(\Zplus)^2$, and take $c_n := c(n+1,0)$ for all $n \ge -1$. In particular, $c_0= [1-e_1]$ and $c_{n+1}+ [e_1]= c_n$ for all $n$. By induction, it follows that $c_n+n[e_1]= c_0$ for all $n \ge 0$.
Our assumptions imply that also $c_0= [1-e_2]$, and so $c_0= c(0,1)$. In particular, $c_0+[e_2]= [1]= c_0+[e_1]$. Hence,
$$c_{n+1}+[e_2]+ (n+1)[e_1]= c_0+[e_2]= c_0+[e_1]= c_n+ (n+1)[e_1]$$
for all $n$. Since the irreducible element $[e_1]$ cancels from sums, $c_{n+1}+[e_2]= c_n$ for all $n$. Similarly, we find that $c_{i+j-1}= c(i,j)$ for all $(i,j) \in (\Zplus)^2$.

The elements $c_0,[e_1],[e_2],c_1,c_2,\dots$ in $V(S)$ satisfy the defining relations for $\Mbar$, so there is a unique homomorphism $\psi: \Mbar \rightarrow V(S)$ such that
$$\psi(\ybar)= [e_1],\qquad \psi(\zbar)= [e_2],\qquad \psi(\xbar_n)= c_n \quad (n\ge0).$$
We have arranged $[e_1],[e_2]\in \psi(\Mbar)$ and $c((\Zplus)^2) \subseteq \psi(\Mbar)$, so it follows from Lemma \ref{VMMstructure}(d) that $\psi$ is surjective.

Consider $\mu,\mu' \in \Mbar$ such that $\psi(\mu)= \psi(\mu')$. If $\mu= r\ybar+s\zbar$ for some $r,s\in \Zplus$, then $\psi(\mu') =\psi(\mu)  \in \Zplus[e_1]+ \Zplus[e_2]$,
from which we see that $[1] \nleq \psi(\mu') +n[e_1]$ for all $n$. It follows that $\mu'$ cannot involve any $\xbar_n$, that is, we must have $\mu'= r'\ybar+s'\zbar$ for
some $r',s'\in \Zplus$. Consequently, $r[e_1]+s[e_2]= r'[e_1]+ s'[e_2]$, whence $r=r'$ and $s=s'$, that is, $\mu= \mu'$.

Now assume that $\mu,\mu' \notin \Zplus \ybar+ \Zplus \zbar$. By \cite[Lemma 4.11(b)]{TW},
we can write $\mu= r\xbar_n+s\ybar$ and $\mu'= r'\xbar_{n'} +s'\ybar$ for some $r,r'\in \N$ and $n,n',s,s' \in \Zplus$. Under the composition of $\psi$ with the canonical map $V(S) \rightarrow V(S/J)$, all the $\xbar_n$ map to $[1]$. Since $S/J$ is a division ring, it follows that $r=r'$. If $n<n'$, then since
$\xbar_n= \xbar_{n+i}+i\ybar$ for all $i\ge0$, we can rewrite $\mu$ as $r\xbar_{n'}+ (s+n'-n)\ybar$, and similarly if $n>n'$. Thus, we may assume that $n=n'$.

If $p_n$ is an idempotent in $S$ with $[p_n]= \xbar_n$, then we have $r.p_n \oplus s.e_1 \sim r.p_n \oplus s'.e_1$. This relation contradicts stable finiteness of $S$ if either $s<s'$ or $s>s'$. Hence, $s=s'$, and so $\mu=\mu'$. This completes the proof that $\psi$ is injective.
\end{proof}

We now present an explicit example of a unital regular $K$-algebra representing $\Mbar $, for an arbitrary field $K$.
This will be used in the next two sections. The construction, via a universal localization and a pullback, will provide a universal property needed later.

\begin{construction}
\label{constrQ}
Let $K$ be a field. Let $Q_1$ be the universal localization $Q_1= \Sigma^{-1}K\langle x,y\mid
xy=1\rangle$, where $\Sigma$ is the set $\{ f\in K[x] \mid f(0)=1\}$. Then $Q_1$ is regular, and it has a unique proper nonzero ideal
$I_1$, such that $Q_1/I_1\cong K(x)$, see \cite[Example 4.3]{AB}. Moreover, $I_1 =\soc(Q_1)$, this ideal is homogeneous (as a right or left semisimple $Q_1$-module),
and $Q_1\cong Q_1\oplus Q_1e$ for any idempotent $e\in I_1$ [ibid].

Let $Q$ be the pullback:
$$\xymatrixrowsep{3pc} \xymatrixcolsep{6pc}
\xymatrix{
Q \ar[r]^{\pi_1} \ar[d]_{\pi_2}  &Q_1 \ar[d]^{\pi'_1}  \\
Q_1^{\text{opp}} \ar[r]^{\pi'_2}  &K(x) }$$
where $\pi'_1$ and $\pi'_2$ are the quotient maps with kernels $I_1$ and $I_1^{\text{opp}}$.
The ideal $I_1\oplus I_1^{\text{opp}}$ of $Q$ is regular, and $Q/(I_1\oplus I_1^{\text{opp}}) \cong K(x)$, so $Q$ is regular.

We will view $Q$ as the subalgebra of $Q_1\times Q_1^{\mathrm{opp}}$ of those elements $(\alpha,\beta )$ such that
$\pi_1'(\alpha) =\pi _2' (\beta)$.
\end{construction}

\begin{proposition}
\label{prop:realizingMbar}
The $K$-algebra $Q$ of Construction {\rm\ref{constrQ}} is regular, and there is a monoid isomorphism
$\psi \colon \Mbar \longrightarrow V(Q)$
sending
$$\xbar _n \mapsto [(y^{n+1}x^{n+1}, 1)]= [(1, x^{n+1} \circ y^{n+1})], \qquad \ybar \mapsto [e_1], \qquad \zbar \mapsto [e_2],$$
where $e_1 : = (0,1-x\circ y)$ and $e_2 :=(1-yx, 0)$, and $\circ$ denotes multiplication in $Q_1^{\text{opp}}$.
\end{proposition}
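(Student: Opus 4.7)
The plan is to apply Lemma~\ref{VMMstructure2}, and then unpack the explicit images of the generators. Regularity of $Q$ is already established in Construction~\ref{constrQ}. For the socle I would verify $\soc(Q) = I_1 \oplus I_1^{\text{opp}}$ (embedded in $Q$ as $(I_1, 0) \oplus (0, I_1^{\text{opp}})$): the inclusion $\supseteq$ comes from observing that $(p, 0)$, for $p$ primitive in $I_1$, generates a minimal right ideal (since the projection $Q \to Q_1$ is surjective, $(p, 0)Q$ maps onto the minimal right ideal $pQ_1$), and symmetrically for $(0, p')$; the inclusion $\subseteq$ follows because $Q/(I_1 \oplus I_1^{\text{opp}}) \cong K(x)$ is a division ring. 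The socle then has exactly two homogeneous components, represented by $e_2$ and $e_1$, and neither is finitely generated as a one-sided ideal since $I_1$ is not f.g.\ in $Q_1$ by \cite[Example~4.3]{AB}.

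The equivalence $1 - e_1 \sim 1 - e_2$ demanded by Lemma~\ref{VMMstructure2} is exhibited by $u := (x, x)$ and $v := (y, y)$ in $Q$ (both lie in $Q$ because $x$ and $y$ have matching images in $K(x)$ under $\pi'_1$ and $\pi'_2$). Using $xy = 1$ in $Q_1$, one computes $uv = (xy, x \circ y) = (1, yx) = 1 - e_1$ and $vu = (yx, y \circ x) = (yx, xy) = (yx, 1) = 1 - e_2$. Granting stable finiteness of $Q$, Lemma~\ref{VMMstructure2} then produces an isomorphism $\psi \colon \Mbar \to V(Q)$ with $\psi(\ybar) = [e_1]$, $\psi(\zbar) = [e_2]$, and $\psi(\xbar_0) = [1 - e_1]$. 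To obtain the explicit form $\psi(\xbar_n) = [(y^{n+1}x^{n+1}, 1)]$, the plan is to proceed by induction on $n$ using the orthogonal decomposition $(y^n x^n, 1) = (y^{n+1}x^{n+1}, 1) \oplus (y^n(1-yx)x^n, 0)$ in $Q$, where orthogonality follows from $x^{n+1}y^{n+1} = 1$ together with $x(1-yx) = 0 = (1-yx)y$; the second summand is equivalent to $e_2$ via the pair $(y^n(1-yx), 0)$ and $((1-yx)x^n, 0)$ in $Q$, whose products compute to $(y^n(1-yx)x^n, 0)$ and $(1-yx,0) = e_2$ respectively. The alternative representative $[(1, x^{n+1} \circ y^{n+1})]$ is then obtained from the equivalence $(y^{n+1}x^{n+1}, 1) \sim (1, x^{n+1} \circ y^{n+1})$ realized by the iterated pair $u_n := (x^{n+1}, x^{n+1})$ and $v_n := (y^{n+1}, y^{n+1})$ in $Q$.

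The chief obstacle is verifying that $Q$ is stably finite, since $Q_1$ itself is not ($[Q_1] + [Q_1 e] = [Q_1]$ for any primitive $e \in I_1$), so the pullback condition must be the source of direct finiteness. For $q = (\alpha, \beta), q' = (\alpha', \beta') \in M_n(Q)$ with $qq' = I_n$, one obtains $\alpha \alpha' = I_n$ and $\beta' \beta = I_n$ in $M_n(Q_1)$, so both $I_n - \alpha' \alpha$ and $I_n - \beta \beta'$ are idempotents in $M_n(I_1)$. The plan is to exploit the pullback relations $\alpha - \beta,\ \alpha' - \beta' \in M_n(I_1)$ together with the explicit matrix-unit structure $I_1 \cong M_\infty(K)$ to force both defects to vanish, yielding $q'q = I_n$ and hence the stable finiteness required to invoke Lemma~\ref{VMMstructure2}.
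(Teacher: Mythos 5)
Your proposal is correct and follows essentially the same route as the paper: verify the hypotheses of Lemma \ref{VMMstructure2} (socle $I_1\oplus I_1^{\text{opp}}$ with two infinitely generated homogeneous components, quotient $K(x)$, stable finiteness, $1-e_1\sim 1-e_2$ via $(x,x)$ and $(y,y)$) and then identify $\psi(\xbar_n)$ by splitting off copies of $e_2$ from $(y^nx^n,1)$ and cancelling the irreducible class $[e_2]$ --- the paper organizes this last step as a telescoping sum $(n+1)[e_2]=[(1-y^{n+1}x^{n+1},0)]$ rather than an induction, which is the same computation. The one step you flag as the chief obstacle, stable finiteness, is exactly the point the paper also leaves to an external argument (``the method of \cite[Lemma 13]{MM}''), and your sketched index-type argument using $\alpha-\beta\in M_n(I_1)$ is precisely that method.
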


\begin{proof}
 We have observed before that $Q$ is regular. The socle of $Q$ is $I_1\oplus I_1^{\text{opp}}$, which
 has two infinitely generated homogeneous components, and the quotient $Q/\soc (Q)$ is the field $K(x)$.
 The method of \cite[Lemma 13]{MM} can be applied to obtain that $Q$ is stably finite. We are thus in the hypotheses
 of Lemma \ref{VMMstructure2}, and so we obtain a monoid isomorphism $\psi \colon \Mbar \to V(Q)$ sending $\xbar_0$, $\ybar$, and $\zbar$ to $[1-e_1]$, $[e_1]$, and $[e_2]$, respectively.

 For $n>0$, the relations in $\Mbar$ imply that $\xbar_0= \xbar_n+ n \zbar$, whence
 $$\psi(\xbar_n) + (n+1)[e_2] = \psi(\xbar_0+ \zbar) = [1_Q].$$
 Since the irreducible element $[e_2]$ cancels from sums \cite[Lemma 1.1]{TW}, to verify that $\psi(\xbar_n) = [(y^{n+1}x^{n+1}, 1)]$ it suffices to show that
 \begin{equation}
 \label{psixnverif}
 [(y^{n+1}x^{n+1}, 1)] + (n+1) [e_2] = [1_Q].
 \end{equation}
For $i\in \Z^+$, we have
$$(x^i,x^i) (y^i(1-yx),0) = e_2  \qquad \text{and} \qquad (y^i(1-yx),0) (x^i,x^i) = (y^ix^i- y^{i+1}x^{i+1}, 0)$$
in $Q$, so that $e_2 \sim (y^ix^i- y^{i+1}x^{i+1}, 0)$. Since the idempotents $(y^ix^i- y^{i+1}x^{i+1} ,0)$ are pairwise orthogonal, we obtain $(n+1) [e_2] = [(1- y^{n+1}x^{n+1} ,0)]$, and \eqref{psixnverif} follows.;
\end{proof}

We now describe a method to prove that certain rings have associated monoid $\calM$.
This will be used in the proofs of our main realization results (Theorems \ref{thm:exchangerealization} and
\ref{thm:realizing}).
\medskip

\begin{proposition}
\label{prop:generaliso}
 Let $R$ be a unital exchange algebra over a field $K$ with
$\soc (R) \cong \bigoplus_{i=1}^{\infty}
M_i(K)$ {\rm(}as $K$-algebras\/{\rm)}, and such that $\soc (R)$ is essential in $R$.
Assume also that there is a monoid homomorphism $\tau \colon \calM \to V(R)$, with $\tau(u)= [1_R]$, such that
$\tau $ restricts to an isomorphism from $\ped (\calM )$ onto $V(\soc (R))$, and such
that the induced map
$$\ol{\tau} \colon \calM/\ped (\calM) \longrightarrow V(R/\soc (R))$$
is also an isomorphism. Then the  map $\tau $ is an isomorphism.
\end{proposition}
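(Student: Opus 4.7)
The plan is to run a monoid five-lemma argument on the commutative square of ``short exact sequences'' of conical refinement monoids
$$
\begin{array}{ccccc}
\ped(\calM) & \hookrightarrow & \calM & \twoheadrightarrow & \calM/\ped(\calM) \\
{\scriptstyle\cong}\,\big\downarrow\tau|_{\ped(\calM)} & & \big\downarrow \tau & & \bar\tau\big\downarrow\,{\scriptstyle\cong} \\
V(\soc R) & \hookrightarrow & V(R) & \twoheadrightarrow & V(R/\soc R)
\end{array}
$$
where the outer verticals are given as isomorphisms. The top row is exact because $\ped(\calM)$ is an o-ideal of $\calM$. For the bottom row, $R$ being an exchange ring gives that $V(\soc R)$ is an o-ideal with $V(R)/V(\soc R)\cong V(R/\soc R)$ (background); and since the essential semiprime ideal $\soc R$ forces $R$ to be semiprime, the background fact $V(\soc R)=\ped V(R)$ applies.

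For injectivity, suppose $\tau(a)=\tau(b)$. Injectivity of $\bar\tau$ gives $\bar a=\bar b$, so $a+c_1=b+c_2$ in $\calM$ with $c_i\in\ped(\calM)$. Apply refinement in $\calM$ to this sum equality to write $a=\alpha+\alpha_2$ and $b=\alpha+\alpha_3$, where $\alpha_2,\alpha_3\in\ped(\calM)$ by the o-ideal property. Applying $\tau$ yields $\tau(\alpha)+\tau(\alpha_2)=\tau(\alpha)+\tau(\alpha_3)$ in $V(R)$. Refine this equation in $V(R)$ and lift the pedestal components back to $\calM$ via $\tau|_{\ped(\calM)}^{-1}$; the pedestal pieces cancel (sums of irreducibles cancel in any conical refinement monoid, by \cite[Lemma 1.1]{TW}), and the resulting pedestal-level equality forces $\alpha_2=\alpha_3$, hence $a=b$.

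For surjectivity, given $[p]\in V(R)$, surjectivity of $\bar\tau$ produces $m\in\calM$ with $\bar\tau(\bar m)=\overline{[p]}$, so $\tau(m)+s=[p]+s'$ for some $s,s'\in V(\soc R)$; writing $s=\tau(q_1)$ and $s'=\tau(q_2)$ with $q_i\in\ped(\calM)$ via the pedestal iso, we get $\tau(m+q_1)=[p]+\tau(q_2)$. It suffices to lift the order relation $\tau(q_2)\le\tau(m+q_1)$ in $V(R)$ to $q_2\le m+q_1$ in $\calM$, i.e., to produce $m'\in\calM$ with $m+q_1=q_2+m'$; applying $\tau$ and cancelling the pedestal element $\tau(q_2)$ then gives $\tau(m')=[p]$. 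This lift is constructed by induction on the number of irreducible summands of $q_2$, at each step using refinement in $V(R)$ and transporting the resulting pedestal components to $\calM$ through $\tau|_{\ped(\calM)}^{-1}$.

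The main obstacle is the ``middle cancellation'' common to both steps: from an equation $x+p=x+q$ in $V(R)$ with $p,q\in V(\soc R)$, one cannot cancel the non-pedestal $x$ directly, since $V(R)$ is not cancellative in general. The resolution is to refine $x+p=x+q$ itself in $V(R)$, so that $x$ breaks into pieces each paired with pedestal components; after lifting via $\tau|_{\ped(\calM)}^{-1}$, every remaining cancellation is between pedestal elements, which is legitimate. The delicate interplay between refinement (in both monoids) and the two flanking isomorphisms is what drives the argument through.
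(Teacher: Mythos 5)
Your skeleton (a five-lemma argument on the pedestal extension, with the whole difficulty concentrated in a ``middle cancellation'') matches the paper's strategy, and you have correctly located the obstacle. But your proposed resolution of that obstacle does not work. You claim that refining $x+p=x+q$ (with $p,q\in\ped V(R)$, $x$ arbitrary) reduces everything to cancellations between pedestal elements. It does not: a refinement matrix for $x+p=x+q$ gives $x=w_{11}+w_{12}=w_{11}+w_{21}$ with $w_{12},w_{21}\in\ped V(R)$, and to compare $p=w_{21}+w_{22}$ with $q=w_{12}+w_{22}$ you must still cancel the non-pedestal piece $w_{11}$ from $w_{11}+w_{12}=w_{11}+w_{21}$ --- the same problem one level down, and iterating refinement never terminates it. Worse, the implication you are trying to prove is simply \emph{false} in a general conical refinement monoid satisfying only ``$V(\soc R)=\ped V(R)$ free'': in $\Mbar$ one has $\xbar_0+\ybar=\xbar_0+\zbar$ with $\ybar,\zbar$ distinct irreducibles in the pedestal. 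So no amount of refinement bookkeeping can close the gap; the hypotheses $\soc(R)\cong\bigoplus_i M_i(K)$, $\tau(u)=[1_R]$, and the specific presentation of $\calM$ must enter.

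What the paper does, and what your proposal is missing, is an \emph{order-lifting} property: if $d\in\ped(\calM)$ and $\tau(d)\le\tau(b)$, then $d\le b$. Establishing this requires (i) pinning down $\tau(a_n)=[e_n]$ exactly, using that $na_n\le u$ but $(n+1)a_n\nleq u$ in $\calM$ together with the multiplicities in $\soc(R)\cong\bigoplus_i M_i(K)$ and the normalization $\tau(u)=[1_R]$; (ii) proving the non-divisibility facts $[e_n]\nleq\tau(x_m),\,\tau(y_m),\,\tau(z_m)$ for $m\ge n$, again from $(n+1)[e_n]\nleq[1_R]$; and (iii) rewriting $b$ via the defining relations of $\calM$ so that all occurrences of $x_m,y_m,z_m$ have index $m\ge n$, whereupon Riesz decomposition forces $[e_n]\le\tau(a_i)$ and hence $a_n\le b$. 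Your surjectivity step (``lift $\tau(q_2)\le\tau(m+q_1)$ by induction, using refinement and transporting pedestal components'') is exactly this property, but without (i)--(iii) the Riesz decomposition could place $[e_n]$ under a non-pedestal term and there is nothing to transport back. Injectivity additionally needs the specific fact about $\calM$ that any $c$ can be written $c=d+w$ with $w\in\ped(\calM)$ and $d\wedge(v+w)=0$ for a prescribed $v\in\ped(\calM)$, which again relies on the structure of $\ped(\calM)$ below an element of $\calM$. As written, your proof would ``prove'' the statement with $\calM$ replaced by $\Mbar$ and $\soc(R)$ by two infinitely generated homogeneous components, where the conclusion can fail; that is the clearest sign the essential ingredient is absent.
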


\begin{proof}
 Observe that our hypotheses imply that $\soc(R) = \bigoplus_{i=1}^\infty Re_iR$, where $e_1,e_2,\dots$ are pairwise inequivalent primitive idempotents with $(Re_iR)_R \cong i(e_iR)$.

Recall that the monoid $V(R)$ is a refinement monoid, because $R$ is an exchange ring (\cite[Corollary 1.3]{AGOP}).

Essential properties of $\calM$ and $V(R)$ that we shall need are the
following, which hold in any conical refinement monoid:
\begin{enumerate}
\item Elements in the pedestal can be canceled: $a+s=b+s$ implies $a=b$
if $s$ is in the pedestal of the respective monoid (\cite[Lemma 1.1]{TW}).
\item If $a+s=b+t$ with $s$ and $t$ in the pedestal, and if $s\wedge t=0$, then
$a=c+t$ and $b=c+s$ for some $c$ in the monoid.
\end{enumerate}
We shall also need the following specific property of $\calM$:
\begin{enumerate}
\item[(3)] If $c\in \calM$ and $v\in \ped(\calM )$, then $c=d+w$ for some $d\in \calM $ and $w\in \ped(\calM )$ such that $d\wedge(v+w)=0$.
\end{enumerate}
First, by \cite[Lemma 4.8]{TW}, $v\in \Zplus a_1+ \cdots+ \Zplus a_n$ for some $n$. Arguing as in
\cite[Lemma 4.8(e)]{TW}, there are $r_i\in \Zplus$ for $i=1,\dots,n$ such that $r_ia_i \le c$ but $(r_i+1)a_i \nleq c$.
Set $w= r_1a_1+ \cdots+ r_na_n$. By Riesz decomposition, $c=d+w$ for some $d\in \calM$. But $a_i\nleq d$ for all $i$, so $d\wedge(v+w)=0$, as required.

By \cite[Lemma 4.8(e)]{TW}, we have, for each $n\ge 1$, that $na_n\le u $ but $(n+1)a_n\nleq u$. Therefore $\tau (a_n)$ is an irreducible element in $V(R)$ (since $\tau |_{\calM}$
is an isomorphism onto $V(\soc (R))$), and $n\tau (a_n)\le \tau (u)=[1_R]$ but $(n+1)\tau(a_n) \nleq [1_R]$. We now show that $\tau (a_n)= [e_n]$. Note first that $[e_1]=\tau (a_i)$ for some $i\ge 1$. Since $2[e_1]\nleq [1_R]$, we get that $i=1$. Now assume that
$\tau (a_j)= [e_j]$ for $j=1,\dots , n$, for some $n\ge 1$. Then $[e_{n+1}]= \tau (a_i)$ for some $i\ge 1$ and, since $(n+2)[e_{n+1}]\nleq [1_R]$, we see that $i\le n+1$. Since we already know that
$\tau (a_j)=[e_j]$ for $j=1,\dots ,n$, we necessarily have $i=n+1$.

Now we are going to show that $[e_n]\wedge \tau (x_{m}+y_{m+1}+z_{m+1})=0$ for all $n\ge 1$ and $m\ge n-1$. To show this, it is enough to prove that
$[e_n]\nleq \tau (x_{n-1})$, $[e_n]\nleq \tau (y_n)$, and $[e_n]\nleq \tau (z_n)$. Observe that
$$\tau (y_{n-1}) = \tau (y_n)+\tau (a_n) = \tau (y_n)+[e_n]\, . $$
This implies that $[e_n]\le \tau (y_{n-1})$, and so $[e_n]\le \tau (y_j)$ for $j=0,1,\dots ,n-1$.
If $[e_n]\le \tau (y_n)$, then $2[e_n]\le \tau (y_{n-1})$, and we have
$$[1_R]= \tau (u) =\tau (y_0)+\tau (y_1)+\cdots + \tau (y_{n-1}) +\tau (x_{n-1})\ge (n+1)[e_n]+\tau (x_{n-1}) \, ,$$
which gives a contradiction, because  $(n+1)[e_n]\nleq [1_R]$.  For the same reason, we see that $[e_n]\nleq \tau (x_{n-1})$.
A similar argument shows that $[e_n]\nleq \tau (z_n)$.

We have a commutative diagram with isomorphisms as shown:
$$\begin{CD}
\ped (\calM) @>>> \calM @>>> \calM/\ped (\calM)\\
@V{\cong}VV  @V{\tau}VV  @V{\ol{\tau}}V{\cong}V \\
V(\soc (R)) @>>> V(R) @>>> V(R/\soc (R))
\end{CD}
$$

In showing
that $\tau$ is an isomorphism, we need the following:
\begin{enumerate}
\item[(4)] If $d\in \ped(\calM)$ and $b\in \calM$ with $\tau(d)\le \tau(b)$, then $d\le b$.
\end{enumerate}
It is enough to prove (4) for irreducible elements, by induction on the number of irreducible elements summing to $d$. Hence, we may assume that $d= a_n$ for some $n$. Thus, $[e_n]= \tau(d) \le \tau(b)$.

Now write $b$ as a $\Zplus$-linear combination of the $x_m$, $y_m$, $z_m$, $a_m$. We can
replace any $x_m$ by $x_{m+1}+y_{m+1}$, and we can replace any $y_m$ or $z_m$ by $y_{m+1}+a_{m+1}$ or $z_{m+1}+a_{m+1}$.
Consequently, we can assume that any $x_m$, $y_m$, or $z_m$ appearing in the expression for $b$ has index $m\ge  n$.
Then $\tau(b)$ is a $\Zplus$-linear combination of $\tau(x_m)$, $\tau(y_m)$, $\tau(z_m)$ with
$m\ge  n$ and $\tau(a_i)$ with $i\ge 1$. The irreducible element $[e_n]$ is $\le$ this combination. Invoking Riesz decomposition,
we find that $[e_n]$ must be $\le$ either some $\tau(x_m)$, $\tau(y_m)$, $\tau(z_m)$ with $m\ge  n$ or some $\tau(a_i)$.
But we have shown above that for $m\ge n$, we have $[e_n]\nleq \tau(x_m)$, $[e_n]\nleq \tau(y_m)$, and $[e_n]\nleq \tau(z_m)$.
Thus, the only possibility is that $[e_n]\le \tau(a_i)= [e_i]$ for some $i$. This forces $n=i$,
meaning that $a_n$ is one of the terms in the $\Zplus$-linear decomposition of $b$. Hence, $d= a_n \le b$, and (4) is
established.

We next show surjectivity. Let $a\in V(R)$. Then there are $b\in
\calM$ and $s,t\in \ped V(R)$ such that $a+s=\tau (b)+t$. After cancelling $s\wedge t$, we may assume that $s\wedge t =0$. In particular, it follows that $s\le \tau(b)$. Since $s=\tau(d)$ for some $d\in \ped(\calM)$, (4) implies that $b=d+e$ for some
$e\in \calM$. Applying (2) to the equation $a+s=\tau (b)+t$, we obtain $a=c+t$ and $\tau
(b)=c+s$, for some $c\in V(R)$. At this point, $c+s= \tau(b)= \tau(d+e)= s+\tau(e)$, so $c=\tau(e)$ because $s$ cancels. Since also $t\in \ped V(R)= \tau(\ped(\calM))$, we conclude that $a=c+t \in \tau(\calM)$, showing surjectivity.

Finally, we show injectivity. Let $a,b\in \calM$ such that $\tau (a)= \tau (b)$. Then, using that $\ol{\tau}$ is an isomorphism, we get $a+s=b+t$ for some
$s,t\in \ped (\calM)$. We can cancel $s\wedge t$ and
assume therefore that $s\wedge t=0$. It follows from (2) that $a=c+t$ and $b=c+s$ for some $c\in \calM$. By (3), $c=d+w$ for some $d\in \calM$ and $w\in \ped(\calM)$ such that $d\wedge(t+w) =0$. Since $\tau(w)$ cancels from the equation $\tau(a)= \tau(b)$, we have $\tau(d+t)= \tau(d+s)$, and it suffices to show that $d+t=d+s$. Hence, after replacing $a$, $b$, $c$ by $d+t$, $d+s$, $d$, we may assume that $c\wedge t=0$. Similarly, we may assume that $c\wedge s =0$.

Since $\tau(t) \le \tau(a)= \tau(b)= \tau(c)+ \tau(s)$ and $\tau(s)\wedge \tau(t) =0$, we must have $\tau(t)\le \tau(c)$. Then $t\le c$ by (4), whence $t=0$. Similarly, $s=0$, and therefore $a=c=b$, proving injectivity of $\tau$.

\end{proof}


\section{An exchange ring realizing $\calM$}
\label{sect:exchangerealization}

Let $(E_0,C^0)$ be the separated graph described
in Figure \ref{fig:partialsometry}, with $C_v= \bigl\{ \{
\alpha _1,\alpha_2\}, \{ \beta _1,\beta _2 \} \bigr\}$. The corner $vL_K(E_0,C^0)v$ is easily seen to be isomorphic to
the universal unital
K-algebra generated by a partial isometry. Indeed the partial isometry corresponds to the element $\alpha_1 \beta_1^*$ (see \cite[Example 9.6]{AE}).
Since in this paper we are using the reverse conventions to those in \cite{AE}, we draw Figure \ref{fig:partialsometry} with arrows reversed from those in \cite[Example 9.6, Figure 4]{AE}.

\begin{center}{
\begin{figure}[htb]
$$
\xymatrixrowsep{4.5pc}\xymatrixcolsep{5pc}\def\labelstyle{\displaystyle}
\xymatrix{
 &v  \ar@/_4ex/[dl]_{\alpha_2}  \ar@/_4ex/[dl]|(0.65){\circ}="e1"  \ar@/_1pc/[d]_(0.6){\alpha_1}  \ar@/_1pc/[d]|(0.45){\circ}="e2"  \ar@/^1pc/[d]^(0.6){\beta_1}  \ar@/^1pc/[d]|(0.45){\circ}="f2"  \ar@/^4ex/[dr]^{\beta_2}  \ar@/^4ex/[dr]|(0.65){\circ}="f1"    \dotedge"e1";"e2"  \dotedge"f1";"f2"  \\
w_2 &w_1 &w_3
}$$
\caption{The separated graph of a partial isometry}
\label{fig:partialsometry}
\end{figure}
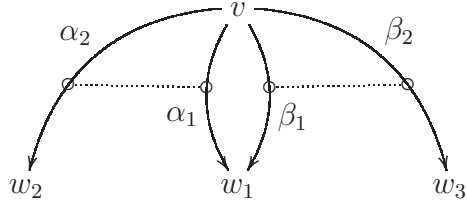}
\end{center}

Let $U$ be the multiplicative subsemigroup of $L_K(E_0,C^0)$ generated by $E_0^1\cup (E_0^1)^*$, and set
$$\Lab _K(E_0,C^0) := L_K(E_0,C^0)/J\, ,$$
where $J$ is the ideal of $L_K(E_0,C^0)$ generated by all the additive commutators $[e(u), e(u')]$, for $u,u'\in U$,
where $e(u)=uu^*$ for $u\in U$. It is shown in \cite[Corollary 5.8]{AE} that $V(\Lab _K(E_0,C^0))\cong M(F_{\infty}, D^{\infty})$,
where $(F_{\infty}, D^{\infty})$ is the complete multiresolution of $(E_0,C^0)$. As we observed
in \cite[\S 4.1]{TW}, the complete multiresolution of $(E_0, C^0)$ is precisely the separated graph $(E,C)$ considered in Subsection \ref{subs:MandMbar} and in \cite[Section 4]{TW},
so we obtain
\begin{equation}
\label{eq:VLabE,C}
V(\Lab_K(E_0,C^0))\cong M(E,C)\cong \calM.
 \end{equation}

Recall that an \emph{inverse semigroup} is a (not necessarily commutative) semigroup $S$ such that for each $a$ in $S$ there is a unique $a^*$
in $S$ such that $aa^*a=a$ and $a^*aa^*=a^*$.
This is equivalent to saying that $S$ is von Neumann regular and that the idempotents of $S$
form a commutative family of elements of $S$ (\cite[Theorem 3]{Lawson}). Moreover, the map $a \mapsto a^*$ is an involution on $S$. The reader is refered to \cite{Lawson} for the general theory of inverse semigroups.
The free inverse semigroup has been studied in several places, see for example \cite{Munn}.
The structure of the C*-algebra of the monogenic free inverse semigroup (i.e., the free inverse semigroup on one element)
has been studied in e.g. \cite{HR}.

Here we are interested in the semigroup algebra of the monogenic free inverse monoid $\mathcal F$, which is just the unitization of the monogenic free inverse semigroup. The canonical generators of $\mathcal F$ will be denoted by $s$, $s^*$.

\begin{lemma}
\label{Fpres}
The monoid $\mathcal F$ can be presented by the generators $s$ and $s^*$ together with the relations
$$ss^*s = s, \;  \; s^*ss^* = s^*, \; \; \text{and} \; \; pq = qp \; \; \text{for} \; \; p,q \in \{ s^i(s^*)^i, \; (s^*)^js^j \mid i,j \in \Z^+ \}.$$
\end{lemma}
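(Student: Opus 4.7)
The plan is to identify $M$, the monoid with the given presentation, with $\mathcal F$. All defining relations of $M$ hold in $\mathcal F$: the first two express that $s^*$ is a semigroup inverse of $s$, and commutativity of the listed basic idempotents is a special case of the well-known fact that all idempotents in an inverse semigroup commute. Hence there is a canonical surjective monoid homomorphism $\pi\colon M\twoheadrightarrow \mathcal F$. I would prove $\pi$ is an isomorphism by showing $M$ is itself an inverse monoid in which $s^*$ is a semigroup inverse of $s$; the universal property of $\mathcal F$ will then supply a homomorphism $\mathcal F\to M$ sending the canonical generators to $s,s^*$, which must be inverse to $\pi$.

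Every defining relation is $*$-symmetric, so the assignment $s\leftrightarrow s^*$ extended anti-multiplicatively by $(uv)^*=v^*u^*$ descends to an involution on $M$; and the relation $ss^*s=s$ states precisely that $s^*$ is a semigroup inverse of $s$. What remains is to verify that all idempotents of $M$ commute. I would tackle this by producing a normal form for elements of $M$ and reading off the idempotents from it.

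The key computations are the absorption identities
\begin{equation*}
s^q(s^*)^q\cdot s \;=\; s\cdot s^{q-1}(s^*)^{q-1}\cdot s^*s \;=\; s\cdot s^*s\cdot s^{q-1}(s^*)^{q-1} \;=\; s^q(s^*)^{q-1} \qquad (q\ge 1),
\end{equation*}
together with their $*$-involutes and the dominance identities $s^i(s^*)^i\cdot s^k(s^*)^k = s^{\max(i,k)}(s^*)^{\max(i,k)}$ (and the analogue for the other family), each provable by induction using only $ss^*s=s$, $s^*ss^*=s^*$, and commutativity of the basic idempotents. Iterating these moves, every word in $s,s^*$ can be rewritten in the normal form
\begin{equation*}
(s^*)^p s^p \cdot s^q(s^*)^q \cdot w, \qquad p,q\in\Zplus,\ w\in\{s^m,(s^*)^m:m\in\Zplus\},
\end{equation*}
matching the familiar Munn-style parameterization of $\mathcal F$ by an ``orbit interval'' $[-p,q]$ together with a final position encoded by $w$. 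From this normal form one reads off that the idempotents of $M$ are exactly the elements with $w=1$, i.e., the members of the commutative submonoid $E_0$ generated by the basic idempotents. Commutativity of idempotents then holds by hypothesis, $M$ is inverse, and $\pi$ is the desired isomorphism.

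The main obstacle is the normal-form reduction itself. Because the relation $s=ss^*s$ can lengthen words, plain word length does not terminate the rewriting, and the argument needs a more refined invariant—for instance, the triple consisting of the maximum, minimum, and final values of the partial sums obtained by reading the word with $s\mapsto+1$ and $s^*\mapsto-1$—together with the verification that every non-normal word admits a rewrite strictly decreasing this invariant in a suitable well-founded order. Executing this bookkeeping, and in particular handling how the absorption and dominance identities interact with a suffix or prefix of the shape $(s^*)^a s^b$, is the principal computation.
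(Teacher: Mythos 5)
Your strategy is essentially the paper's: present the relations, observe they hold in $\mathcal F$ to get a surjection, prove the presented monoid $M$ is an inverse monoid with $s^*$ the inverse of $s$ by deriving a normal form from the basic relations and reducing commutativity of idempotents to the postulated commutativity of the basic ones, and then close the loop with the universal property of $\mathcal F$ (via the ``regular with commuting idempotents'' characterization of inverse semigroups, which the paper also invokes). The absorption and dominance identities you isolate are exactly the computations the paper performs, and your normal form $(s^*)^ps^p\, s^q(s^*)^q\, w$ is interconvertible with the paper's $y^i(y^*)^jy^k$ ($j\ge i\ge 0$, $j\ge k\ge 0$).

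Two execution points where the paper's proof is cleaner and your plan has soft spots. First, the termination problem you worry about does not arise in the paper: rather than setting up a rewriting system, it proves by induction on word length that the set of elements expressible in normal form contains $1$ and is closed under left multiplication by the generators; the only delicate case ($y$ times $y^j(y^*)^jy^k$) is handled by precisely your absorption identity, so no well-founded invariant is needed. Second, your claim that one can ``read off'' that the idempotents of $M$ are the normal forms with $w=1$ is not automatic, because uniqueness of the normal form is not yet available at that stage; you need an extra separating invariant, e.g.\ the degree homomorphism $M\to\Z$ sending $s\mapsto 1$, $s^*\mapsto -1$ (well defined since all relations are balanced), which kills idempotents and forces $w=1$. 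The paper avoids characterizing the idempotents of $M$ altogether: it first identifies $\mathcal F$ with the quotient of the bicyclic-type presentation by the commutation relations $(uu^*)(vv^*)=(vv^*)(uu^*)$ for \emph{all} words $u,v$ (where every idempotent is easily seen to equal $ee^*$), and then uses the normal form only to show that in $M$ each $uu^*$ is a product of two basic idempotents, so the two congruences coincide. Either route works, but yours needs the degree-map step made explicit.
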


\begin{proof}
Let $X$ be the monoid presented by two generators $x$ and $x^*$ with two relations $xx^*x = x$ and $x^*xx^* = x^*$. The natural involution on the free monoid generated by $x$, $x^*$ induces an involution $^*$ on $X$. We first prove that $\mathcal F \cong X/{\sim}$, where $\sim$ is the congruence on $X$ generated by the commutation relations $(uu^*)(vv^*) \sim (vv^*)(uu^*)$ for $u,v \in X$. The involution on $X$ induces an involution on $X/{\sim}$, and there is a unique $*$-homomorphism $f: X/{\sim} \to \mathcal F$ sending $\ol{x}$ to $s$. To see that $f$ is an isomorphism, we only need to show that $X/{\sim}$ is an inverse semigroup, since then the universal property of $\mathcal F$ provides a unique $*$-homomorphism $g: \mathcal F \to X/{\sim}$ sending $s$ to $\ol{x}$, and $g$ will be an inverse for $f$.

Write elements of $X/{\sim}$ in the form $\ol{u} = \ol{u}_1 \ol{u}_2 \cdots \ol{u}_n$ with all $\ol{u}_i \in \{ \ol{x}, \ol{x}^*\}$. An induction on $n$ implies that $\ol{u} \ol{u}^* \ol{u} = \ol{u}$. Indeed, this is trivial or given if $n=0,1$, and if it holds for an element $\ol{v} := \ol{u}_1 \ol{u}_2 \cdots \ol{u}_{n-1}$, then
$$\ol{u} \ol{u}^* \ol{u} = \ol{v} (\ol{u}_n \ol{u}_n^*) (\ol{v}^* \ol{v}) \ol{u}_n = \ol{v} (\ol{v}^* \ol{v}) (\ol{u}_n \ol{u}_n^*) \ol{u}_n = \ol{v} \ol{u}_n = \ol{u}.$$
It remains to show that all idempotents in $X/{\sim}$ commute.
Since by construction elements of the form $\ol{u}\ol{u}^*$ commute with each other, it is enough to show that idempotents have this form.
If $e= e^2$ in $X/{\sim}$, then
$$e= ee^* e = (ee^*)(e^*e) = (e^*e)(ee^*)= e^*ee^*= e^*.$$
Therefore $e= ee^*$, and idempotents in $X/{\sim}$ commute with each other. By \cite[Theorem 3]{Lawson}, we get that $X/{\sim}$ is an inverse semigroup, and thus
that $X/{\sim} \cong \mathcal F$.

Now let $Y := X/{\approx}$, where $\approx$ is the congruence on $X$ generated by the relations
$$pq \approx qp \; \; \text{for} \; \; p,q \in \{ x^i(x^*)^i, \, (x^*)^j x^j \mid i,j \in \Z^+ \},$$
and write $y$ for the congruence class of $x$ in $Y$. The involution on $X$ induces an involution on $Y$, and we shall show that $(uu^*)(vv^*) = (vv^*)(uu^*)$ for all $u,v \in Y$.This will imply that $\approx$ equals $\sim$, completing the proof of the lemma.

Observe that $y^i (y^*)^i y^i = y^i$ for all $i\ge 0$, which is trivial or given in the cases $i=0,1$. If it holds for some $i$, then
$$y^{i+1} (y^*)^{i+1} y^{i+1} = y \bigl[ \bigl( y^i(y^*)^i \bigr) (y^*y) \bigr] y^i = y \bigl[ (y^*y) \bigl( y^i(y^*)^i \bigr) \bigr] y^i = yy^i = y^{i+1},$$
establishing an induction. Applying the involution, we obtain as well $(y^*)^i y^i (y^*)^i = (y^*)^i$ for all $i \ge 0$.

We claim that any element in $Y$ can be written in the form $y^i (y^*)^j y^k$ with $j\ge i\ge 0$ and $j \ge k\ge 0$. It suffices to show that the set of elements of this form is closed under left multiplication by $y$ and $y^*$. Given such an element $u:= y^i (y^*)^j y^k$, if we multiply it on the left by $y$, there are two cases to consider. If $i<j$, then $yu= y^{i+1}(y^*)^j y^k$, with $j \ge i+1$. If $i= j$ then
\begin{align*}
yu &= y^{j+1} (y^*)^{j} y^k= y^{j+1}(y^*)^{j+1} y^{j+1}(y^*)^{j}y^k = y^{j+1}(y^*)^j(y^*y)(y^j(y^*)^j) y^k  \\
 &= y^{j+1}(y^*)^j (y^j(y^*)^j) (y^*y) y^k = y^{j+1}(y^*)^{j+1} y^{k+1}.
 \end{align*}
If we multiply $u$ on the left by $y^*$, we can assume that $i>0$, in which case we get
$$y^*u = (y^*y)(y^{i-1}(y^*)^{i-1}) (y^*)^{j-i+1} y^k = (y^{i-1}(y^*)^{i-1}) (y^*y) (y^*)^{j-i+1} y^k =
 y^{i-1}(y^*)^j y^k.$$

Finally, for $u:= y^i (y^*)^j y^k$ with $j\ge i\ge 0$ and $j \ge k\ge 0$, we have
\begin{align*}
uu^* &= y^i(y^*)^j y^k(y^*)^k y^j (y^*)^i = y^i(y^*)^{j-k} (y^*)^ky^k (y^*)^k y^j(y^*)^i = y^i(y^*)^j y^j (y^*)^i  \\
 &= (y^i(y^*)^i)((y^*)^{j-i}y^{j-i})(y^i(y^*)^i)=
  (y^i(y^*)^i)((y^*)^{j-i}y^{j-i}).
  \end{align*}
Therefore, by definition of $Y$, its elements of the form $uu^*$ commute with each other, as required.
\end{proof}

\begin{theorem}
 \label{theo:monfreeinvsem} Let $K$ be any field with involution, and endow the semigroup algebra $K[\mathcal F ]$ with its natural involution.
 Then there is a $*$-algebra isomorphism
 $$K[\mathcal F ] \cong v\Lab _K(E_0,C^0)v $$
which sends $s\mapsto \alpha_1\beta_1^*$. Moreover we have a monoid isomorphism
 $$V(K[\mathcal F])\cong \calM \, .$$
 \end{theorem}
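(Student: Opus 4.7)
The plan is to construct mutually inverse $*$-algebra homomorphisms between $K[\mathcal F]$ and $v\Lab_K(E_0,C^0)v$, using the universal properties on both sides, and then to deduce the $V$-monoid isomorphism via a full-corner argument.

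First I set $t := \alpha_1\beta_1^* \in v\Lab_K(E_0,C^0)v$. The (SCK1) relations give $tt^*t = \alpha_1(\beta_1^*\beta_1)(\alpha_1^*\alpha_1)\beta_1^* = \alpha_1 w_1 w_1 \beta_1^* = t$, and symmetrically $t^*tt^* = t^*$, so $t$ is a partial isometry. Each element $t^i(t^*)^i$ or $(t^*)^j t^j$ equals $e(u) = uu^*$ for an appropriate word $u \in U$, and by the very definition of $\Lab_K(E_0,C^0)$ all such idempotents commute. By the presentation of $\mathcal F$ established in Lemma \ref{Fpres}, these are precisely the relations needed, so I obtain a $*$-algebra homomorphism $\phi \colon K[\mathcal F] \to v\Lab_K(E_0,C^0)v$ sending $s \mapsto t$. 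Surjectivity of $\phi$ is immediate: since $vL_K(E_0,C^0)v$ is the universal unital $K$-algebra on a partial isometry (\cite[Example 9.6]{AE}), it is $*$-generated by $p := \alpha_1\beta_1^*$, so $v\Lab_K(E_0,C^0)v$ is $*$-generated by $t = \phi(s)$, which lies in the image of $\phi$.

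For injectivity I would construct an inverse. The same universal property yields a unique $*$-algebra homomorphism $\tilde\psi \colon vL_K(E_0,C^0)v \to K[\mathcal F]$ with $p \mapsto s$, and the task is to show that $\tilde\psi$ factors through $v\Lab_K(E_0,C^0)v = vL_K(E_0,C^0)v/vJv$. For any $u \in U$ with $s(u) = v$, the image $\tilde\psi(e(u))$ is an idempotent of $K[\mathcal F]$ of the form $ww^*$ for some $w \in \mathcal F$, obtained by substituting $p \mapsto s$ in a factorisation of $u$; since the idempotents of the inverse monoid $\mathcal F$ commute pairwise, every commutator $[e(u),e(u')] \in vLv$ with $s(u) = s(u') = v$ lies in $\ker \tilde\psi$. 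The main obstacle is verifying that these ``$v$-local'' commutators generate $vJv$ as a two-sided ideal of $vLv$. The key identity $\alpha_i[e(u),e(u')]\alpha_i^* = [e(\alpha_i u), e(\alpha_i u')]$ (and its analogue with $\beta_j$) translates commutators at the vertices $w_i$ into commutators at $v$; combined with the fullness of $v$ in $L_K(E_0,C^0)$, this should let me rewrite every element of $vJv$ as a $vLv$-bimodule combination of $v$-local commutators. Once $\tilde\psi$ descends to a homomorphism $\psi \colon v\Lab_K(E_0,C^0)v \to K[\mathcal F]$ with $\psi(t) = s$, the compositions $\phi \circ \psi$ and $\psi \circ \phi$ agree with the identities on generators and therefore equal the identities, so $\phi$ is the desired $*$-algebra isomorphism.

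For the $V$-monoid statement, $v$ is a full idempotent in $L_K(E_0,C^0)$: each sink $w_i$ satisfies $w_i = \alpha_i^*\alpha_i$ for $i=1,2$ or $w_3 = \beta_2^*\beta_2$, so $w_i = \alpha_i^* v \alpha_i$ (respectively $w_3 = \beta_2^* v \beta_2$) lies in $L v L$, forcing $LvL = L$. Fullness is preserved by the quotient, so $v$ remains a full idempotent in $\Lab_K(E_0,C^0)$, and the corner inclusion induces a monoid isomorphism $V(v\Lab_K(E_0,C^0)v) \cong V(\Lab_K(E_0,C^0))$. Combining this with the isomorphism built in the first part and with \eqref{eq:VLabE,C} yields $V(K[\mathcal F]) \cong V(v\Lab_K(E_0,C^0)v) \cong V(\Lab_K(E_0,C^0)) \cong \calM$, as required.
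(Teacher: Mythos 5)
Your proposal follows the same route as the paper, whose own proof is essentially a one-line appeal to the identification of $vL_K(E_0,C^0)v$ with the universal unital $K$-algebra on a partial isometry (equivalently, the semigroup algebra of $\langle x,x^*\mid x=xx^*x,\ x^*=x^*xx^*\rangle$) followed by ``it is straightforward to show''; you are supplying the details the authors omit, and both the forward map via Lemma \ref{Fpres} and the full-corner argument for the $V$-monoid statement are exactly what the paper intends.

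One intermediate claim in your injectivity argument is stated too strongly and is in fact false as written: for $u\in U$ with $s(u)=v$, the element $\tilde\psi(e(u))$ need \emph{not} have the form $ww^*$ with $w\in\mathcal F$. For instance $e(\alpha_2)=\alpha_2\alpha_2^*=v-\alpha_1\alpha_1^*$ by (SCK2), so $\tilde\psi(e(\alpha_2))=1-ss^*$, which is a difference of idempotents of $\mathcal F$ but not itself an element of $\mathcal F$. The correct (and sufficient) statement is that every $\tilde\psi(e(u))$ lies in the commutative subalgebra $\operatorname{span}_K E(\mathcal F)$ spanned by the idempotents of $\mathcal F$; this follows by induction on the length of $u$, using that each innermost pair collapses via (SCK1), that (SCK2) converts $\gamma\gamma^*$ for $\gamma\in\{\alpha_2,\beta_2\}$ into $v$ minus $pp^*$ or $p^*p$, and that $a\,E(\mathcal F)\,a^*\subseteq E(\mathcal F)$ for $a\in\mathcal F$. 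Since $\operatorname{span}_K E(\mathcal F)$ is commutative, the commutators still die, so the slip is repairable. Your identification of the remaining obstacle --- that $vJv$ is generated as an ideal of $vLv$ by the $v$-based commutators --- is the genuine crux, and your mechanism is right: commutators $[e(u),e(u')]$ vanish unless $s(u)=s(u')$, conjugation by the edges $\alpha_i,\beta_j$ (using $\gamma^*\gamma=r(\gamma)$) converts commutators based at the sinks into $v$-based ones and back, and for a $v$-based commutator $c$ one has $acb=(av)c(vb)$, so compressing the ideal of $L$ generated by such $c$ by $v$ yields exactly the ideal of $vLv$ they generate. You leave this step at the level of ``this should let me rewrite\dots'', but the outline is correct and completes the argument.
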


\begin{proof}
 As we mentioned above, $vL_K(E_0,C^0)v$ is just the universal unital $K$-algebra generated by a partial isometry.
 This is the same as the semigroup algebra of the monoid given by two generators $x$, $x^*$ and defining relations
 $x=xx^*x$, $x^*=x^*xx^*$. Using this, it is straightforward to show that $K[\mathcal F ]$ is isomorphic to $v\Lab _K(E_0,C^0) v$ in the described manner.

Finally, observe that $v$ is a full corner in $\Lab _K(E_0,C^0)$, and thus, using  (\ref{eq:VLabE,C}), we get
$$V(K[\mathcal F ]) \cong V(v\Lab _K(E_0,C^0) v)\cong V(\Lab _K (E_0,C^0)) \cong M(E,C)\cong \calM \, ,$$
as desired.
 \end{proof}

We now proceed to summarize the algebraic structure of the semigroup algebra $K[\mathcal F]$.
To simplify notation, set $A :=K[\mathcal F ]$. Recall that
the set $\{ s^i(s^*)^i ,\; (s^*)^js^j : i,j\in \Z^+ \}$ is a commuting set of projections in $A$. (That these elements are projections stems from the fact that $s^i(s^*)^i s^i = s^i$ and $(s^*)^js^j (s^*)^j = (s^*)^j$.)
Observe also that $(s^n(s^*)^n)(s^m(s^*)^m)= s^n(s^*)^n$ whenever $n\ge m$. A similar relation holds for the projections $(s^*)^ns^n$. Thus,
\begin{equation}
\label{ss*seq}
1 \ge ss^* \ge s^2(s^*)^2 \ge \cdots \qquad \text{and} \qquad 1 \ge s^*s \ge (s^*)^2s^2 \ge \cdots \; .
\end{equation}
The commutativity of the projections in $S$ also yields $(s^{i-1}(s^*)^{i-1}) (s^*s) = (s^*s) (s^{i-1}(s^*)^{i-1})$ and $((s^*)^{i-1}s^{i-1}) (ss^*) = (ss^*) ((s^*)^{i-1}s^{i-1})$. Multiplying the first equation on the left by $s$ and the second by $s^*$, and also applying the involution, we obtain
\begin{equation}
\label{sis*is}
\begin{aligned}
s^i(s^*)^is &= s s^{i-1}(s^*)^{i-1}  &\qquad (s^*)^is^is^* &= s^* (s^*)^{i-1}s^{i-1}  \\
s^* s^i (s^*)^i &= s^{i-1} (s^*)^{i-1} s^*  &\qquad s (s^*)^i s^i &= (s^*)^{i-1} s^{i-1} s
\end{aligned}
\end{equation}
for all $i>0$.

For $i,j\in \Z^+$, set
\begin{equation}
\label{q-ij}
q_{-i,j} := (s^i(s^*)^i-s^{i+1}(s^*)^{i+1})((s^*)^js^j-(s^*)^{j+1}s^{j+1}) \in A .
\end{equation}
Observe that the elements $q_{-i,j}$ are projections in $A$, such that $q_{-i,j} \le s^i(s^*)^i$ and $q_{-i,j} \le (s^*)^js^j$.

In the following, we denote by $(e_{ij})_{i,j=1}^N$ the standard matrix units in any matrix algebra $M_N(K)$, and we extend the involution on $K$ to the $*$-transpose involution on $M_N(K)$.

\begin{lemma}
 \label{lem:centralprojhn}
 The projections $q_{-i,j}$ are mutually orthogonal minimal projections in $A$.
For $n\in \Z^+$, the projections
$$h_n:= q_{0,n}+q_{-1,n-1}+ \cdots + q_{-n,0}$$
are pairwise orthogonal central projections in $A$ such that $h_nA\cong M_{n+1}(K)$, via $*$-algebra isomorphisms sending $sh_n \mapsto \sum_{j=1}^n e_{j+1,j}$ and $s^*h_n \mapsto \sum_{i=1}^n e_{i,i+1}$.
\end{lemma}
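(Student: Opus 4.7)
My plan has three main steps: orthogonality of the $q_{-i,j}$, centrality of the $h_n$, and the matrix algebra structure, with minimality of $q_{-i,j}$ as a by-product.

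\emph{Step 1 (orthogonality).} I factor $q_{-i,j} = P_i Q_j$ with $P_i := s^i(s^*)^i - s^{i+1}(s^*)^{i+1}$ and $Q_j := (s^*)^j s^j - (s^*)^{j+1} s^{j+1}$. Because all the projections $s^k(s^*)^k$ and $(s^*)^\ell s^\ell$ are idempotents of the inverse monoid $\mathcal{F}$ they form a commutative family; combined with the descending chains \eqref{ss*seq}, $(P_i)$ and $(Q_j)$ are each families of pairwise orthogonal projections which commute elementwise. Hence $q_{-i,j} q_{-i',j'} = P_i P_{i'} Q_j Q_{j'}$ vanishes whenever $(i,j) \neq (i',j')$, proving orthogonality of the $q_{-i,j}$. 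Since distinct $h_n$'s involve disjoint index pairs, they too are pairwise orthogonal projections.

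\emph{Step 2 (centrality of $h_n$).} Using \eqref{sis*is}, I derive the push-through rules $sP_k = P_{k+1}s$ for $k \geq 0$, $sQ_k = Q_{k-1}s$ for $k \geq 1$, and $sQ_0 = s - ss^*s = 0$. Combined via commutativity of the $P_i$'s with the $Q_j$'s, these give $s\, q_{-k, n-k} = q_{-(k+1), n-k-1}\, s$ for $k < n$ and $s\, q_{-n, 0} = 0$, together with the symmetric formulas $q_{-k, n-k}\, s = s\, q_{-(k-1), n-k+1}$ for $k \geq 1$ and $q_{0, n}\, s = 0$. Summing the terms of $h_n$ on either side yields $sh_n = h_n s = \sum_{j=1}^n q_{-j, n-j}\, s$; taking adjoints gives $s^* h_n = h_n s^*$, so $h_n$ is central.

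\emph{Step 3 (minimality and matrix structure).} By Lemma \ref{Fpres}, every $w \in \mathcal{F}$ has canonical form $w = s^a (s^*)^b s^c$ with $b \geq a, c$. Iterating the push-through rules shows $w q_{-i, j} = q_{-m, j+i-m}\, w$ with $m := i + a + c - b$ (when the indices are in range), so $q_{-i, j} w q_{-i, j}$ vanishes unless $a + c = b$, in which case $w = (s^a (s^*)^a)((s^*)^c s^c)$ is a product of commuting projections each dominating $q_{-i, j}$ precisely when $a \leq i$ and $c \leq j$; hence $q_{-i, j} A q_{-i, j} = K q_{-i, j}$ and $q_{-i, j}$ is minimal. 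Writing $f_k := q_{-k, n-k}$, the rules of Step 2 specialize to $f_k s^p = s^p f_{k-p}$ for $p \leq k$ (and $0$ otherwise), together with the adjoint formulas for $s^*$. I define matrix units in $h_n A$ by $e_{k+1, k+1} := f_k$, $e_{a, b} := s^{a-b} f_{b-1}$ for $a > b$, and $e_{a, b} := e_{b, a}^*$ for $a < b$; these shift rules make verification of the relations $e_{a, b} e_{c, d} = \delta_{b, c} e_{a, d}$ routine. The resulting $*$-homomorphism $\Phi \colon M_{n+1}(K) \to h_n A$ is injective by simplicity of $M_{n+1}(K)$, and by construction $\Phi\bigl(\sum_{j=1}^n e_{j+1, j}\bigr) = \sum_{k=0}^{n-1} sf_k = sh_n$ (with the statement for $s^* h_n$ following by $*$). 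For surjectivity, a case analysis analogous to the minimality argument shows that for every $w \in \mathcal{F}$ and every pair $(k, \ell)$, the element $f_k w f_\ell$ is either $0$ or exactly the matrix unit $e_{k+1, \ell+1}$; since $h_n A = \bigoplus_{k, \ell} f_k A f_\ell$, this proves $\Phi$ is surjective.

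The main bookkeeping obstacle will be the surjectivity step: showing that $f_k w f_\ell$ collapses to exactly $e_{k+1, \ell+1}$ (not merely a scalar multiple) when nonzero. This requires tracking which projections $s^p (s^*)^p$ and $(s^*)^p s^p$ dominate $f_k$ and $f_\ell$, and splitting into the cases $k \geq \ell$ (rewriting $s^a (s^*)^b s^c = s^{k-\ell} (s^{b-c}(s^*)^{b-c})((s^*)^c s^c)$) and $k < \ell$ (rewriting $s^a (s^*)^b s^c = (s^a (s^*)^a)(s^*)^{\ell-k}((s^*)^c s^c)$), then absorbing the projection factors using \eqref{ss*seq}.
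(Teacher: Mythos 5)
Your route is essentially the paper's: the same commutation/orthogonality argument for the $q_{-i,j}$, the same push-through identities $sq_{-i,j}=q_{-i-1,j-1}s$ and $q_{-i,j}s=sq_{-i+1,j+1}$ (with the boundary vanishings) to get centrality of $h_n$, the same reduction to the normal form $s^a(s^*)^bs^c$ to compute $q_{-i,j}Aq_{-i,j}=Kq_{-i,j}$, and the same matrix units $s^{a-b}f_{b-1}$. However, there is one genuine gap: nowhere do you show that the projections $q_{-i,j}$ are \emph{nonzero}. This is not cosmetic. A minimal projection is by definition nonzero, so Step 3's conclusion of minimality needs it; and your injectivity argument for $\Phi$ via simplicity of $M_{n+1}(K)$ is circular without it, since the kernel of $\Phi$ is either $0$ or everything, and ruling out the latter is exactly the assertion that some $f_k=\Phi(e_{k+1,k+1})$ is nonzero, i.e.\ that $h_n\ne 0$. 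All of your computations take place inside the abstractly presented algebra $K[\mathcal F]$, and relations-only manipulations can never certify that an element built from the generators fails to collapse to $0$.

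The paper closes this by producing, for each $n$, a concrete $*$-representation $A\to M_{n+1}(K)$ sending $s\mapsto t_n=\sum_{j=1}^n e_{j+1,j}$ (well-defined by Lemma \ref{Fpres}, since the $t_n^i(t_n^*)^i$ and $(t_n^*)^jt_n^j$ are commuting diagonal idempotents) and checking that $q_{-i,j}$ with $i+j=n$ maps to the nonzero matrix unit $e_{i+1,i+1}$; the $n=0$ case is handled by the representation $s\mapsto 0$. You should insert this (or an equivalent argument, e.g.\ that $q_{-i,j}$ expands as a $\pm1$ combination of four \emph{distinct} idempotents of $\mathcal F$, which itself requires structural input about $\mathcal F$ such as Munn's normal form). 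With that supplied, the rest of your outline — in particular the surjectivity bookkeeping via $f_kwf_\ell\in\{0,e_{k+1,\ell+1}\}$, which is a slightly more explicit version of the paper's appeal to the matrix-unit presentation of $h_nA$ — goes through.
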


\begin{proof}
In view of \eqref{ss*seq}, each of the sequences
$$ ( s^i(s^*)^i-s^{i+1}(s^*)^{i+1} \mid i \in \Z^+ ) \qquad \text{and} \qquad ( (s^*)^js^j-(s^*)^{j+1}s^{j+1} \mid j \in \Z^+ )$$
consists of mutually orthogonal projections. Since all these projections commute, it follows that the $q_{-i,j}$ are mutually orthogonal.
 Now consider the following representations of $A$ on $M_{n+1}(K)$, for $n\in \Z^+$.
 First, for $n=0$, we send $s$ and $s^*$ to $0$. Then $q_{0,0}$ is sent to $1$ so $q_{0,0}\ne 0$.
 Moreover $h_0=q_{0,0}=(1-ss^*)(1-s^*s)$ satisfies $h_0s=h_0s^*=sh_0=s^*h_0=0$, so $h_0$ is a central element
 and $h_0A= h_0K$.

 For $n\ge1$, set $t_n := \sum _{j=1}^n e_{j+1,j} \in M_{n+1}(K)$ and observe that all the matrices $t_n^i(t^*_n)^i$ and $(t^*_n)^jt_n^j$, for $i,j\ge0$, are diagonal and so commute with each other. Moreover, $t_nt^*_nt_n = t_n$ and $t^*_nt_nt^*_n = t^*_n$. Hence, by Lemma \ref{Fpres}, there exists a $*$-homomorphism from $\mathcal F$ to the multiplicative monoid of $M_{n+1}(K)$ such that $s \mapsto t_n$. This map extends to a $*$-algebra homomorphism $A\rightarrow M_{n+1}(K)$. The projections $q_{-i,j}$, with $i,j\in \Z^+$ and $i+j=n$, are sent to the minimal projections
 $e_{i+1,i+1}$ in $M_{n+1}(K)$. Therefore $q_{-i,j}\ne 0$ for all $i,j$.

 Observe that $s^*q_{0,n}=0= q_{0,n}s$ and, using \eqref{sis*is}, that $q_{-i,j}s= sq_{-i+1,j+1}$ if $i\ge 1$.
 Similarly, $sq_{-n,0}=0= q_{-n,0}s^*$ and $sq_{-i,j} = q_{-i-1,j-1}s$ if $j\ge 1$. It follows that, for each $n\ge 1$, the projections
 $q_{0,n}, q_{-1,n-1},\dots , q_{-n,0}$ are pairwise equivalent.

 Now observe that
\begin{align*}
 h_ns & = (q_{0,n}+q_{-1,n-1}+\cdots + q_{-n,0})s= q_{-1,n-1}s+\cdots +q_{-n,0}s \\
 & =sq_{0,n}+\cdots + sq_{-n+1,1}=
 s(q_{0,n}+\cdots + q_{-n+1,1}+q_{-n,0})=sh_n.
\end{align*}
Hence, $h_ns= sh_n$. Applying the involution, we get $h_ns^*=s^*h_n$ and so $h_n$ is central. Clearly $h_nh_m=0$ if $n\ne m$.

 We are going to check that $q_{-i,j}Aq_{-i,j}=q_{-i,j}K$.
In order to prove that, note first that every element in $\mathcal F$ can be written in the form $s^k(s^*)^ls^m$, where
$l\ge k\ge 0$ and $l\ge m\ge 0$ (see e.g. \cite{Preston}). Now if we have a product of the form $q_{-i,j}s^k(s^*)^ls^m$,
and $l=k+m$ then $q_{-i,j}s^k(s^*)^ls^m$ is $0$ if either $k>i$ or $m>j$ and it is $q_{-i,j}$ if $k\le i$ and $m\le j$.
Therefore $q_{-i,j}s^k(s^*)^ls^mq_{-i,j}$ is either $0$ or $q_{-i,j}$ in this case. If $l\ne k+m$, then
$q_{-i,j}s^k(s^*)^ls^m$ is either $0$ or $s^k(s^*)^ls^mq_{-i+k+m-l,j+k+m-l}$ and so $q_{-i,j}s^k(s^*)^ls^mq_{-i,j}$ is $0$ in this case.
This shows that $q_{-i,j}Aq_{-i,j}=q_{-i,j}K$. In particular, $q_{-i,j}$ is a minimal projection.

Now each $h_n$ is the sum of $n+1$ orthogonal equivalent projections  $q_{0,n}, q_{-1,n-1},\dots , q_{-n,0}$ with $q_{-i,j}Aq_{-i,j} = q_{-i,j}K$. It follows that there is an isomorphism $h_nA \cong M_{n+1}(K)$. To pin down a particular isomorphism, observe that the elements
$$\varepsilon_{ij} := \begin{cases} q_{1-i,n+1-i} (s^*)^{j-i} = (s^*)^{j-i} q_{1-j,n+1-j} &\text{(for} \; i\le j\text{)}  \\
q_{1-i,n+1-i} s^{i-j} = s^{i-j} q_{1-j,n+1-j} &\text{(for} \; i> j\text{)} \end{cases}$$
form a set of $(n+1)\times(n+1)$ matrix units in $h_nA$, with $\varepsilon_{11}+ \cdots+ \varepsilon_{n+1,n+1} = h_n$. Hence, we may choose the isomorphism $h_nA \rightarrow M_{n+1}(K)$ to send $\varepsilon_{ij} \mapsto e_{ij}$ for all $i$, $j$. Consequently,
$$sh_n= sq_{0,n}+ \cdots+ sq_{1-n,1} = \varepsilon_{21}+ \cdots+ \varepsilon_{n+1,n} \mapsto \sum_{j=1}^n e_{j+1,j}$$
and similarly $s^*h_n \mapsto \sum_{i=1}^n e_{i,i+1}$.
 This concludes the proof.
 \end{proof}

 \begin{lemma}
  \label{lem:AmodsocA}
  Let $B=A/S_1$, where $S_1 :=\sum _{n=0}^{\infty} h_nA$, and denote by $\ol{x}$ the class in $B$ of an element $x$ in $A$.
  Then $\soc (B)= \ol{I}\oplus \ol{J}$, where $\ol{I}=B(1-\ol{s}\ol{s}^*)B$ and $\ol{J}=B(1-\ol{s}^*\ol{s})B$, and $\soc (B)$ is an essential left or right ideal of $B$.
  Moreover, the family $\{ \ol{s}^i(1-\ol{s}\ol{s}^*)(\ol{s}^*)^j \mid i,j\ge 0 \}$ is a set of matrix units forming a $K$-basis for $\ol{I}$, and similarly the family
  $\{ (\ol{s}^*)^i(1-\ol{s}^*\ol{s})\ol{s}^j \mid i,j\ge 0 \}$ is a set of matrix units forming a $K$-basis for $\ol{J}$. We have $B/\soc (B) \cong K[x,x^{-1} ]$.
   \end{lemma}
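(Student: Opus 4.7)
My plan begins by extracting the working relations in $B$ from the vanishing of the $\ol{h}_n$. Since the projections $\ol{q}_{-i,j}$ with $i+j = n$ are pairwise orthogonal and sum to $\ol{h}_n = 0$, each $\ol{q}_{-i,j} = 0$ in $B$. Combined with commutativity of the projections $\ol{s}^a(\ol{s}^*)^a$ and $(\ol{s}^*)^b\ol{s}^b$, this yields the telescoping identities $(1-\ol{s}\ol{s}^*)(\ol{s}^*)^m\ol{s}^m = 1-\ol{s}\ol{s}^*$ and $(1-\ol{s}^*\ol{s})\ol{s}^m(\ol{s}^*)^m = 1-\ol{s}^*\ol{s}$ for all $m\ge 0$, together with the local annihilations $(1-\ol{s}\ol{s}^*)\ol{s} = \ol{s}^*(1-\ol{s}\ol{s}^*) = \ol{s}(1-\ol{s}^*\ol{s}) = (1-\ol{s}^*\ol{s})\ol{s}^* = 0$ that come straight from $\ol{s}\ol{s}^*\ol{s} = \ol{s}$ and its $*$-mirror.

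Using these, the matrix-unit verifications become routine. Writing $E^I_{ij} := \ol{s}^i(1-\ol{s}\ol{s}^*)(\ol{s}^*)^j$, the case $j=k$ of $E^I_{ij}E^I_{kl}$ collapses to $E^I_{il}$ via the first telescoping identity applied to $(\ol{s}^*)^j\ol{s}^j$, while the cases $j\ne k$ leave a middle factor $(1-\ol{s}\ol{s}^*)\ol{s}^{k-j}(1-\ol{s}\ol{s}^*)$ (for $k>j$) or $(1-\ol{s}\ol{s}^*)(\ol{s}^*)^{j-k}(1-\ol{s}\ol{s}^*)$ (for $j>k$), both zero by the local annihilations. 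The cross product $E^I_{ij}E^J_{kl} = \ol{s}^i(1-\ol{s}\ol{s}^*)(\ol{s}^*)^{j+k}(1-\ol{s}^*\ol{s})\ol{s}^l$ vanishes by the key step of replacing $1-\ol{s}\ol{s}^*$ via the first telescoping identity at exponent $j+k+1$, commuting the projection $\ol{s}^{j+k}(\ol{s}^*)^{j+k}$ past $(1-\ol{s}^*\ol{s})$ using the second telescoping identity, and finishing with $\ol{s}(1-\ol{s}^*\ol{s}) = 0$; $E^J_{ij}E^I_{kl} = 0$ follows symmetrically. Bumping relations such as $\ol{s}E^I_{ij} = E^I_{i+1,j}$ and $\ol{s}^*E^I_{ij} = E^I_{i-1,j}$ (or $0$ when $i=0$), together with their right-hand analogues, reduce any monomial sandwich $uE^I_{00}v$ to $0$ or a single $E^I_{ij}$, so $\ol{I}$ is $K$-spanned by $\{E^I_{ij}\}$ and similarly $\ol{J}$ by $\{E^J_{ij}\}$.

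Non-vanishing and linear independence come from two representations. The right-shift representation $\rho_+$ of $A$ on $V_+ = \bigoplus_{n\ge 0}Ke_n$ (with $s\mapsto R$, so $R^*R = 1$ and $1-RR^*$ is the projection onto $e_0$) annihilates every $\ol{q}_{-i,j}$, hence factors through $B$, and sends $E^I_{ij}$ to the matrix unit $e_j\mapsto e_i$; the mirror left-shift representation $\rho_-$ on $V_-$ does the same for $\{E^J_{ij}\}$ while killing $\{E^I_{ij}\}$, so $\{E^I_{ij}\}\cup\{E^J_{ij}\}$ is linearly independent and $\ol{I}\cap\ol{J} = 0$. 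For the quotient, the map $\phi: A \to K[x, x^{-1}]$ sending $s\mapsto x$, $s^*\mapsto x^{-1}$ is well-defined (every $h_n$ maps to $0$ because all projections $s^i(s^*)^i$ and $(s^*)^js^j$ map to $1$) and factors through $B$ as $\ol{\phi}$, killing $\ol{I}+\ol{J}$; the universal property of $K[x,x^{-1}]$ applied to the invertible image of $\ol{s}$ in $B/(\ol{I}\oplus\ol{J})$ supplies an inverse, yielding $B/(\ol{I}\oplus\ol{J})\cong K[x,x^{-1}]$.

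Each $E^I_{jj}B$ is a minimal right ideal (right-multiplication by $\ol{s},\ol{s}^*$ yields a transitive walk on the column index in $\mathbb{Z}_{\ge 0}$), so $\ol{I}\oplus\ol{J}\subseteq \soc(B)$; conversely any minimal right ideal of $B$ with nonzero image in the commutative domain $K[x,x^{-1}]$ would be a minimal right ideal there, which is impossible, so $\soc(B) = \ol{I}\oplus\ol{J}$. For essentiality I plan to show $\ker\rho_+ = \ol{J}$ (and symmetrically $\ker\rho_- = \ol{I}$): since $\ol{s}^*\ol{s} = 1$ in $B/\ol{J}$, the quotient identifies with the algebraic Toeplitz algebra with $K$-basis $\{\ol{s}^i(\ol{s}^*)^j : i,j\ge 0\}$, and a triangular linear-independence check shows $\rho_+$ sends these to linearly independent operators on $V_+$. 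Granting this, for any $b\in B$ with $b\cdot\soc(B) = 0$, the condition $bE^I_{ij} = 0$ for all $i,j$ forces $\rho_+(b)e_i = 0$ for every $i$, hence $b\in\ker\rho_+ = \ol{J}$; symmetrically $b\in\ol{I}$, so $b = 0$. This $\lann(\soc(B)) = 0$ gives right-essentiality of the socle, and the symmetric argument gives left-essentiality. I expect the main technical obstacle to be the faithfulness of $\rho_+$ on $B/\ol{J}$, which I handle via the normal-form and shift-operator linear-independence argument sketched above.
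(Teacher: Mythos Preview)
Your proof is correct and complete, but the strategy differs from the paper's in two places worth noting.

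\textbf{Spanning and independence.} The paper reaches the spanning of $\ol{I}$ by the $E^I_{ij}$ via the monoid normal form $s^k(s^*)^ls^m$ ($l\ge k,m\ge 0$) for $\mathcal F$, computing $(1-\ol{s}\ol{s}^*)\ol{s}^k(\ol{s}^*)^l\ol{s}^m$ case by case and showing each product lies in $\sum_j K(1-\ol{s}\ol{s}^*)(\ol{s}^*)^j$ modulo $S_1$. Linear independence is left implicit (it follows once one knows $1-\ol{s}\ol{s}^*\ne 0$, which the paper ultimately gets from the embedding into $\prod M_n(K)$). You instead derive the telescoping identities directly from $\ol{q}_{-i,j}=0$ and then import two Toeplitz-type representations $\rho_\pm$, which give nonzero-ness and linear independence of both matrix-unit families in one stroke. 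Your route avoids the Preston normal form entirely.

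\textbf{Essentiality.} The paper proves right-essentiality by an explicit multiplier: writing a general nonzero $x\in B$ as (a polynomial in $\ol{s}$ with nonzero constant term) $+\;\alpha+\beta$ with $\alpha\in\ol{I}$, $\beta\in\ol{J}$, it right-multiplies by $\ol{s}^N(1-\ol{s}\ol{s}^*)$ for $N$ large enough to kill $\alpha$ and $\beta$, landing in $\ol{I}\setminus\{0\}$. You take the annihilator route: once you know $\ker\rho_+=\ol{J}$ (which is exactly the identification $B/\ol{J}\cong K\langle x,y\mid yx=1\rangle$ together with faithfulness of the shift representation, and this is sound since $S_1\subseteq J$), the condition $bE^I_{ij}=0$ for all $i,j$ forces $\rho_+(b)=0$, hence $b\in\ol{J}$; symmetrically $b\in\ol{I}$, so $b=0$. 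This gives $\lann(\soc B)=0$, and since $\soc B$ is a two-sided ideal, that indeed yields right-essentiality (any right ideal disjoint from $\soc B$ would left-annihilate it).

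What each buys: the paper's hands-on argument is self-contained and its explicit multiplier trick is reused verbatim in the next proposition (showing $S_1$ essential in $A$). Your approach is cleaner conceptually and makes the Toeplitz-algebra structure of $B/\ol{J}$ explicit, which is a genuinely useful observation; it also handles linear independence and essentiality with the same tool.
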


   \begin{proof}
    Recall that every element in $\mathcal F$ can be written in the form $s^k(s^*)^ls^m$ with $l\ge k\ge 0$ and $l\ge m\ge 0$.
    We claim that
\begin{equation*}
(1-\ol{s}\ol{s}^*)\ol{s}^k(\ol{s}^*)^l\ol{s}^m= \begin{cases}  0  &(\text{if\ } k>0)\\
(1-\ol{s}\ol{s}^*)(\ol{s}^*)^{l-m}
&(\text{if\ } k=0). \end{cases}
\end{equation*}
    It is clear that the product is $0$ when $k>0$, so assume that $k=0$.
   If $m>0$, we have in $A$
 \begin{align*} (1-ss^*) & (s^*)^ls^m  = (1-ss^*)(s^*)^{l-m}((s^*)^ms^m) \\
 & = (1-ss^*)(s^*)^{l-m}((s^*)^{m-1}s^{m-1})-(1-ss^*)(s^*)^{l-m}((s^*)^{m-1}s^{m-1}-(s^*)^ms^m).
 \end{align*}
    So, by induction, it suffices to check that $(1-ss^*)(s^*)^{l-m}((s^*)^{m-1}s^{m-1}-(s^*)^ms^m)$ belongs to $S_1$.
   We have
\begin{align*}
    (1 -ss^*) &  (s^*)^{l-m}((s^*)^{m-1}s^{m-1}-(s^*)^ms^m)  \\
    &= (1 -ss^*)   (s^*)^{l-m} s^{l-m} (s^*)^{l-m} ((s^*)^{m-1}s^{m-1}-(s^*)^ms^m)  \\
    & = (s^*)^{l-m}\Big[s^{l-m}(1-ss^*)(s^*)^{l-m}((s^*)^{m-1}s^{m-1}-(s^*)^ms^m)\Big]  \\
    &= (s^*)^{l-m}q_{-(l-m), m-1}\in S_1 .
    \end{align*}
   Analogously, since each element of $\mathcal F$ can be also expressed in the form $(s^*)^ks^l(s^*)^m$ with $l\ge k\ge 0$ and $l\ge m\ge 0$,
   we obtain that an element of $B(1-\ol{s}\ol{s}^*)$ can be expressed as a linear combination of terms $\ol{s}^i(1-\ol{s}\ol{s}^*)$.
   Summing up, we get that any element of $B(1-\ol{s}\ol{s}^*)B$ can be expressed as a linear combination of the elements
   $\ol{s}^i(1-\ol{s}\ol{s}^*)(\ol{s}^*)^j$, where $i,j\ge 0$. Essentially the same computation shows that the family $\{ \ol{s}^i(1-\ol{s}\ol{s}^*)(\ol{s}^*)^j\mid i,j\ge 0 \}$
   is a set of matrix units, and that $(1-\ol{s}\ol{s}^*)B(1-\ol{s}^*\ol{s})= 0$. The latter gives that $\ol{I}\ol{J}=0$, and applying the involution, one gets $\ol{J} \ol{I} =0$. Since $\ol{I}$ is spanned by the matrix units $\ol{s}^i(1-\ol{s}\ol{s}^*)(\ol{s}^*)^j$, any nonzero ideal contained in $\ol{I}$ must contain $1-\ol{s}\ol{s}^*$ and so cannot be nilpotent. As $(\ol{I}\cap \ol{J})^2 =0$, we thus have $\ol{I}\cap \ol{J} = 0$.

   An analogous proof works for $\ol{J}$. Now observe that $(1-\ol{s}\ol{s}^*)B(1-\ol{s}\ol{s}^*)=(1-\ol{s}\ol{s}^*)K$, and so $1-\ol{s}\ol{s}^*$ is a minimal
   projection of $B$. Hence, $\ol{I}\subseteq \soc (B)$ and similarly $\ol{J}\subseteq \soc (B)$. It is clear that $B/(\ol{I}\oplus \ol{J})\cong K[x,x^{-1}]$. In particular, $\soc( B/(\ol{I}\oplus \ol{J}) ) = 0$, from which it follows that $\ol{I}\oplus \ol{J} = \soc(B)$.

It remains to prove that $\ol{I}\oplus \ol{J}$ is an essential left or right ideal of
   $B$. Because of the involution, it is enough to show the statement on the right. Since $B/(\ol{I}\oplus \ol{J})\cong K[x,x^{-1}]$ to show this it is enough to show that
   for any element $x$ in $B$ of the form $a_0+a_1\ol{s}+\dots + a_n\ol{s}^n+\alpha+\beta$, with $a_i\in K$, $a_0\ne 0$, $\alpha\in \ol{I}$ and $\beta \in \ol{J}$, there exists $r\in B$ such that
$xr\ne 0$ and $xr\in \ol{I}\oplus \ol{J}$. Observe that since $\ol{I}$ is spanned by the matrix units $\ol{s}^i(1-\ol{s}\ol{s}^*)(\ol{s}^*)^j$, we can choose $N\ge0$ so that $\alpha \ol{s}^N (1-\ol{s}\ol{s}^*) =0$. Since $\beta \ol{s}^N (1-\ol{s}\ol{s}^*) \in \ol{J}\ol{I} = 0$, we obtain
$$y := x  \ol{s}^N (1-\ol{s}\ol{s}^*) = \sum_{i=0}^n a_i  \ol{s}^{N+i} (1-\ol{s}\ol{s}^*) \in \ol{I}.$$
Moreover,
\begin{align*}
(1-\ol{s}\ol{s}^*) (\ol{s}^*)^N y &= (\ol{s}^*)^N \ol{s}^N \sum_{i=0}^n a_i  (1-\ol{s}\ol{s}^*) \ol{s}^i  (1-\ol{s}\ol{s}^*)  \\
 &= a_0  (\ol{s}^*)^N \ol{s}^N(1-\ol{s}\ol{s}^*) = a_0 (1-\ol{s}\ol{s}^*) \ne 0,
 \end{align*}
 so that $y\ne 0$. This concludes the proof.
   \end{proof}

 We are now ready to show that $S_1=\soc (A)$.

 \begin{proposition}
  \label{prop:S_1=socA}
  The ideal $S_1=\sum _{n=0}^{\infty} h_nA$ is essential in $A$ {\rm(}as a right or left ideal\/{\rm)}. Consequently, $S_1=\soc (A)$.
  Moreover, there is an embedding of $*$-algebras $A\hookrightarrow \prod_{i=1}^{\infty} M_i(K)$ such that
  $s\mapsto (x_i)$, where $x_i =\sum _{j=1}^{i-1} e_{j+1,j}$ for all $i\in \N$. The image of $\soc(A)$ under this embedding equals the
  ideal $\bigoplus _{i=1}^{\infty} M_i(K)$.
   \end{proposition}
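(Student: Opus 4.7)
The strategy is to build the claimed embedding $\pi \colon A \hookrightarrow \prod_{i=1}^{\infty} M_i(K)$ first, and then read off both essentiality of $S_1$ and the identification $S_1 = \soc(A)$ from its injectivity. By Lemma \ref{lem:centralprojhn}, for each $n \ge 0$ the assignment $s \mapsto t_n := \sum_{j=1}^{n} e_{j+1,j}$ extends to a $*$-algebra homomorphism $\pi_n \colon A \to M_{n+1}(K)$ factoring through the isomorphism $h_n A \cong M_{n+1}(K)$. Assembling these coordinate-by-coordinate produces a $*$-algebra map $\pi=(\pi_n)_{n\ge 0}$ with $\pi(s) = (x_i)_{i\ge 1}$. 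A direct computation shows $\pi_n(h_m) = \delta_{mn}\, I_{n+1}$, so $\pi$ maps each $h_n A$ isomorphically onto the $(n{+}1)$-st factor $M_{n+1}(K)$; in particular $\pi(S_1) \subseteq \bigoplus_{i=1}^\infty M_i(K)$.

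The crucial step is injectivity of $\pi$. I would write a candidate $0\ne a\in A$ as a finite sum $a=\sum_{u\in F}\lambda_u u$ over distinct normal-form elements $u=s^{i(u)}(s^*)^{l(u)}s^{m(u)}$ with $l(u)\ge i(u),m(u)$. A short calculation with the matrix units gives
\[
\pi_n(u) \;=\; \sum_{r=1}^{n+1-l(u)} e_{r+i(u),\; r+l(u)-m(u)} \qquad (n\ge l(u)),
\]
a segment of length $n+1-l(u)$ lying on the diagonal $d(u):= i(u)-l(u)+m(u)$. Assuming $\pi_n(a)=0$ for every $n$, I would fix a diagonal $d$ and reparameterize the $u\in F$ with $d(u)=d$ by $(p,q)=(i(u),\, l(u)-i(u))$, so that the row-support of $\pi_n(u)$ becomes $[p+1,\, n+1-q]$. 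Vanishing of $\pi_n(a)$ on each row of diagonal $d$ for every $n$ then forces the double partial sums $S(x,y):=\sum_{p\le x,\; q\le y}\lambda_{p,q}$ to vanish for all $x,y\ge 0$, and a double application of inclusion-exclusion recovers $\lambda_{p,q}=0$. Since this holds on every diagonal, all $\lambda_u$ must vanish, contradicting $a\ne 0$; hence $\pi$ is injective.

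With injectivity in hand, essentiality of $S_1$ is automatic: given $0\ne a\in A$, choose $n$ with $\pi_n(a)\ne 0$; then $h_n a \ne 0$, and since $h_n$ is central, $h_n a = ah_n$ is a nonzero element of both $aA\cap S_1$ and $Aa\cap S_1$, establishing essentiality on either side. For the socle identification, each $h_n A\cong M_{n+1}(K)$ is simple artinian, and since $h_n$ is central the right $A$-action on $h_n A$ factors through $h_n A$ itself, making $h_n A$ a semisimple right $A$-module contained in $\soc(A)$; hence $S_1\subseteq \soc(A)$. Conversely, any minimal right ideal $M$ of $A$ satisfies $M\cap S_1 \ne 0$ by essentiality and so $M\subseteq S_1$, giving $\soc(A)\subseteq S_1$. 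Finally, since $\pi(h_n A)$ fills the $(n{+}1)$-st factor $M_{n+1}(K)$, injectivity of $\pi$ yields $\pi(\soc(A))=\pi(S_1)=\bigoplus_{i=1}^\infty M_i(K)$.

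The main obstacle is the linear-independence argument in the middle paragraph: a single $\pi_n$ is far from injective, and on each diagonal of a fixed $M_{n+1}(K)$ the characteristic functions of distinct intervals admit many nontrivial linear relations, so the argument genuinely requires using all of the $\pi_n$ simultaneously and exploiting the flexibility of varying $n$.
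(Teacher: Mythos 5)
Your proof is correct, but it reverses the logical order of the paper's argument and replaces its key computation with a different one. The paper establishes essentiality of $S_1$ first: it uses Lemma \ref{lem:AmodsocA} to reduce a nonzero $x\in A$ modulo $S_1$ to a special form, and then exhibits an explicit right multiplier (built from the projections $q_{-i,j}$) carrying $x$ to a nonzero element of $S_1$; injectivity of the regular representation $\tau(x)=(xh_n)$ is then deduced softly, since $\ker\tau$ is the annihilator of $\soc(A)$, and a nonzero annihilator would meet the essential ideal $\soc(A)$ in a nonzero square-zero ideal, contradicting regularity of $\soc(A)$. You instead prove injectivity of the embedding first, by a direct linear-independence argument on the normal-form basis of $K[\mathcal F]$, and then read off essentiality and the socle identification as corollaries. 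I checked your middle paragraph: the formula $\pi_n\bigl(s^i(s^*)^ls^m\bigr)=\sum_{r=1}^{n+1-l}e_{r+i,\,r+l-m}$ is correct, distinct elements of $\mathcal F$ have distinct parameter triples so the $\lambda_u$ are genuine coordinates, the achievable pairs $(X,Y)$ cover the support $\{p\ge\max(0,d),\ q\ge\max(0,-d)\}$ of each diagonal, and the boundary terms $S(p-1,q)$, $S(p,q-1)$, $S(p-1,q-1)$ in the inclusion-exclusion vanish for support reasons even when they are not realized by a pair $(n,x)$ — so the argument closes. What your route buys is self-containedness (it needs only the normal form of $\mathcal F$ and the explicit truncated-shift matrices, bypassing Lemma \ref{lem:AmodsocA} entirely) and it isolates the clean statement that the finite-dimensional representations $\pi_n$ are jointly faithful on $K[\mathcal F]$; what the paper's route buys is that injectivity comes for free from a general annihilator-plus-regularity argument once essentiality is known.
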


 \begin{proof}
It is clear from Lemma \ref{lem:centralprojhn} that $S_1\subseteq \soc (A)$. So, in order to prove the equality, we only have to show that
  $S_1$ is essential as a left and as a right ideal. Again, it suffices to show the right-handed version. Since
  $\soc (B) $ is essential in $B$ by Lemma \ref{lem:AmodsocA}, and given the description of $\soc (B)$ obtained
  in that lemma, we see that it suffices to show that for an element $x \in A$ of the form
  $\alpha +\beta + \gamma$ such that
 $$\alpha =\sum _{i=0}^N\sum _{j=0}^M \lambda_{ij} s^i(1-ss^*)(s^*)^j \qquad \text{and} \qquad \beta =\sum_{k=0}^{N'} \sum_{l=0}^{M'} \mu _{k,l} (s^*)^k(1-s^*s)s^l,$$
 with $\lambda_{i,j},\mu _{k,l}\in K$ not all $0$, and $\gamma \in S_1$, there exists
  $r\in A$ such that $xr$ is a nonzero element of $S_1$. We will suppose that $\lambda _{N,M}\ne 0$. The case
  where all $\lambda _{ij}$ are $0$, and some $\mu _{kl}$ in nonzero is handled
  in a similar fashion. Now there exists a positive integer $L$ such that $\beta (s^*)^L=0= \gamma (s^*)^L$, and obviously
  $\alpha (s^*)^L\ne 0$. We may also assume that $L \ge N$. So multiplying $x$ on the right by $(s^*)^L$ and changing notation, we can assume that
  $x= \sum _{i=0}^N\sum _{j=0}^M \lambda_{ij} s^i(1-ss^*)(s^*)^j$, with $\lambda_{N,M}\ne 0$ and $M \ge N$.
  We now compute
  \begin{align*}
   x(s^M(s^*)^M(1-s^*s)s^M ) & = \Big[\sum _ {i=0}^N\sum _{j=0}^M \lambda_{ij}s^i(1-ss^*)(s^*)^j\Big]s^M(s^*)^M(1-s^*s) s^M \\
   & = \Big[ \sum _{i=0}^N \lambda _{iM} s^i(1-ss^*) (s^*)^M \Big] (1-s^*s) s^M \\
   & = \sum _{i=0}^N \lambda_{iM} [s^i(1-ss^*)(s^*)^i(s^*)^{M-i}(1-s^*s)s^{M-i}]s^i \\
   & = \sum _{i=0}^N \lambda _{iM} q_{-i,M-i}s^i
 = \sum_{i=0}^N \lambda _{iM} s^i q_{0,M} \ne 0.
  \end{align*}
This shows that $S_1$ is an essential ideal of $A$.

  The embedding $A\hookrightarrow \prod _{i=1}^{\infty} M_i(K)$ is essentially the regular representation of $A$ on $\soc (A)$.
  Namely, we define $\tau \colon A \to \prod_{n=0}^{\infty} h_nA$ by $\tau (x) = (xh_n)$. This is clearly a $*$-algebra homomorphism, and the kernel of
  $\tau $ is $\text{Ann} (\soc (A))$, the annihilator of $\soc (A)=S_1$. If $ \text{Ann}(\soc (A)) \ne 0$, then $T:= \soc (A)\cap \text{Ann} (\soc (A))\ne 0$,
  because we have shown before that the socle of $A$ is essential.   But then $T$ is a nonzero ideal of $\soc (A)$ with $T^2=0$.
  Since $\soc (A)$ is regular, this is impossible. So $\tau $ is injective, and clearly $\bigoplus _{n=0}^{\infty} h_nA \subseteq \tau (A)$.

  Now by Lemma \ref{lem:centralprojhn}, $h_nA\cong M_{n+1}(K)$ under a $*$-algebra isomorphism sending $sh_n$ to the partial isometry $x_{n+1}=\sum_{j=1}^n e_{j+1,j}$ of $M_{n+1}(K)$.
   \end{proof}

  Summarizing, the socle of $A$ is $S_1$, and the second socle $S_2$ of $A$ is the ideal of $A$ generated by $1-ss^*$ and $1-s^*s$.
 Note that $S_2$ is a regular ideal of $A$ and that $A/S_2\cong K[x,x^{-1}]$. So the ring $A$ does not seem to be far from being a regular ring. Nevertheless, we have the following negative result:

 \begin{proposition}
  \label{prop:calMnotrealizable-uncountable}
  The monoid $\calM$ cannot be realized by a regular algebra over any uncountable field.
   \end{proposition}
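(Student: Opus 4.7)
The plan is to invoke \cite[Proposition 4.1]{Areal}, which says that a conical refinement monoid admitting a faithful state but failing cancellation cannot be realized as $V(R)$ for any regular $K$-algebra over an uncountable field $K$. Thus I would reduce the proof of Proposition \ref{prop:calMnotrealizable-uncountable} to two verifications: (a) $\calM$ carries a faithful state, and (b) $\calM$ is not cancellative.

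For (a), I would construct the state explicitly on generators by setting
$$\sigma(x_n) = \sigma(y_n) = \sigma(z_n) = 2^{-(n+1)} \quad (n\ge 0), \qquad \sigma(a_n) = 2^{-(n+1)} \quad (n\ge 1),$$
and check each of the defining relations \eqref{MECrelns}. For instance, $\sigma(y_l) = 2^{-(l+1)} = 2^{-(l+2)} + 2^{-(l+2)} = \sigma(y_{l+1}) + \sigma(a_{l+1})$, and analogously for $z_l$; the relation $x_l = x_{l+1} + y_{l+1}$ yields $2^{-(l+1)} = 2^{-(l+2)} + 2^{-(l+2)}$, and similarly with $z_{l+1}$; and $\sigma(x_0) + \sigma(y_0) = 1 = \sigma(x_0) + \sigma(z_0)$ since $\sigma(y_0) = \sigma(z_0) = 1/2$. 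This extends uniquely to a monoid homomorphism $\sigma\colon \calM \to [0,\infty)$ with $\sigma(u) = 1$. Since every element of the conical monoid $\calM$ is a $\Zplus$-linear combination of generators, and each generator has strictly positive $\sigma$-value, it follows that $\sigma(m) = 0$ forces $m = 0$, so $\sigma$ is faithful.

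For (b), the relation $x_0 + y_0 = x_0 + z_0$ is built into the presentation of $\calM$, while $y_0 \ne z_0$ in $\calM$: indeed, their images $\ybar, \zbar$ are distinct irreducible elements of the quotient $\calM/\ped(\calM) \cong \Mbar$, as recalled in Subsection~\ref{subs:MandMbar} and proved in \cite[Lemma 4.14(a)]{TW}. Hence $x_0$ cannot be cancelled from both sides of the relation, confirming non-cancellativity.

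The argument is genuinely short: there is no serious obstacle once the state has been written down, and the only computation of substance is the bookkeeping to see that the defining relations of $\calM$ are preserved by $\sigma$, which works out cleanly with the geometric values $2^{-(n+1)}$. With (a) and (b) in hand, \cite[Proposition 4.1]{Areal} immediately yields the conclusion.
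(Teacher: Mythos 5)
Your proof is correct and follows essentially the same route as the paper: both reduce to \cite[Proposition 4.1]{Areal} by exhibiting a faithful state on $\calM$ (the paper cites the homomorphism $s(x_n)=s(y_n)=s(z_n)=s(a_n)=1/2^n$ from the proof of \cite[Lemma 4.5]{TW}, which is your $\sigma$ up to normalization) and by noting that $x_0+y_0=x_0+z_0$ with $y_0\ne z_0$ shows non-cancellativity. The only cosmetic differences are that you verify the state on the relations explicitly rather than citing it, and you deduce $y_0\ne z_0$ via the quotient $\Mbar$ instead of citing \cite[Lemma 4.1]{TW} directly.
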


   \begin{proof}
    As observed in the proof of \cite[Lemma 4.5]{TW}, there is a homomorphism $s\colon \calM \to \mathbb Q$ such that
    $$s(x_n)=s(y_n)=s(z_n)=s(a_n)=1/2^n$$
    for all $n$, and $s^{-1}(0)=\{ 0 \}$. Moreover $\calM$ is not cancellative, since $x_0+y_0=x_0+z_0$ but $y_0\ne z_0$ (by \cite[Lemma 4.1]{TW}).
    Therefore, by \cite[Proposition 4.1]{Areal}, there is no regular algebra $R$ over an uncountable field such that $V(R)\cong \calM$.
    \end{proof}

 Returning to our main example, let $\Sigma $ be the set of elements of $A$ of the form $f(s)$, where $f\in K[x]$ is a polynomial such that $f(0)=1$.

 \begin{lemma}
  \label{lem:prodembeddSigmaA}
  With the above notation, the following properties hold:
  \begin{enumerate}
  \item The embedding $\rho_0: A \rightarrow \prod_{n=0}^\infty h_nA \cong \prod_{n=1}^\infty M_{n+1}(K)$ of Proposition {\rm\ref{prop:S_1=socA}} extends uniquely to a $K$-algebra homomorphism $\rho: \Sigma^{-1} A \rightarrow \prod_{n=1}^\infty M_{n+1}(K)$ such that $\rho \iota_\Sigma = \rho_0$, and the image of $\rho$ contains $\bigoplus_{n=1}^\infty M_{n+1}(K)$.
   \item The map $\iota _{\Sigma}\colon A\to \Sigma^{-1}A$ is injective, and $\iota _{\Sigma}$ induces an isomorphism from $\soc (A)$ onto $\Sigma^{-1}\soc(A)$.
   \item Let $S_2$ be the ideal of $A$ generated by $1-ss^*$ and $1-s^*s$. Then every element of $\Sigma ^{-1}S_2$ can be written
   as a linear combination of terms of the following forms {\rm(}where we suppress the map $\iota_{\Sigma}$ from the notation{\rm)}:
   \begin{enumerate}
   \item[(A)] $f^{-1}s^i(1-ss^*)(s^*)^j$, for $i,j\ge 0$ and $f\in \Sigma$,
   \item[(B)] $(s^*)^i(1-s^*s) s^jf^{-1}$, for $i,j\ge 0$ and $f\in \Sigma$,
   \item[(C)] $(s^*)^i(1-s^*s)s^jf^{-1}(1-ss^*)(s^*)^k$, for $i,j,k\ge 0$ and $f\in \Sigma$,
   \item[(D)] elements from $\soc (A)$.
   \end{enumerate}
 \item Let $I$ and $J$ be the ideals of $A$ generated by $1-ss^*$ and $1-s^*s$ respectively. Every element of $\Sigma^{-1} I$ {\rm(}respectively, $\Sigma^{-1} J${\rm)} can be written as a linear combination of terms of the forms {\rm (A), (C), (D) (}respectively, {\rm (B), (C), (D))}.
\end{enumerate}
  \end{lemma}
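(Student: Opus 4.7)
For part (1), the key point is that in each matrix component $h_nA\cong M_{n+1}(K)$ the image $\rho_0(s)=x_{n+1}$ is nilpotent of index $n+1$, so every $f\in\Sigma$ maps to $1+(\text{nilpotent})$ and is invertible. The universal property of $\Sigma^{-1}A$ then produces the unique extension $\rho$, and $\operatorname{Im}\rho$ contains $\rho_0(\soc(A))=\bigoplus_n M_{n+1}(K)$ by Proposition~\ref{prop:S_1=socA}. For part (2), injectivity of $\iota_\Sigma$ is inherited from that of $\rho_0=\rho\,\iota_\Sigma$. To identify $\iota_\Sigma(\soc(A))$ with $\Sigma^{-1}\soc(A)$, I would show that $\iota_\Sigma(\soc(A))$ is already a two-sided ideal of $\Sigma^{-1}A$: since $f(s)h_n=f(sh_n)$ is a unit of the matrix algebra $h_nA$ with inverse some $g_n\in h_nA$, we have $\iota_\Sigma(f)^{-1}\iota_\Sigma(h_n)=\iota_\Sigma(g_n)$, so multiplication by $\iota_\Sigma(f)^{-1}$ on either side sends $\iota_\Sigma(\soc(A))$ into itself.

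For parts (3) and (4), let $W_I$, $W_J$, $W$ denote the $K$-linear spans of elements of types (A)+(C)+(D), (B)+(C)+(D), and (A)+(B)+(C)+(D), respectively. A direct check in each component $h_nA$ shows $\soc(A)\subseteq I\cap J$, so type (D) lies in both $\Sigma^{-1}I$ and $\Sigma^{-1}J$, and the inclusions $W_I\subseteq\Sigma^{-1}I$, $W_J\subseteq\Sigma^{-1}J$, $W\subseteq\Sigma^{-1}S_2$ are clear. Moreover, $1-ss^*$ is type (A) with $f=1$ and $i=j=0$, and $1-s^*s$ is the analogous type (B). The plan is thus to show $W_I$, $W_J$, $W$ are two-sided ideals; parts (3) and (4) will then follow at once. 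The key identities driving the closure calculations are
\[
(1-ss^*)s = s^*(1-ss^*) = s(1-s^*s) = (1-s^*s)s^* = 0,
\]
which give the absorbing relations $(1-ss^*)f(s)^{-1}=1-ss^*$ and $f(s)^{-1}(1-s^*s)=1-s^*s$ in $\Sigma^{-1}A$; the observation (verified componentwise) that $(1-ss^*)(s^*)^k(1-s^*s)\in h_kA\subseteq\soc(A)$ for every $k\ge0$; the rewriting rules $s^*s^i=s^{i-1}-(1-s^*s)s^{i-1}$ and $s^js^*=s^{j-1}-s^{j-1}(1-ss^*)$ (for $i,j\ge1$); and $(1-ss^*)g(s)=g(0)(1-ss^*)$ for $g\in K[s]$.

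Most closure verifications under multiplication by the generators $s$, $s^*$, $f^{-1}$ of $\Sigma^{-1}A$ are then routine. For instance, $s\cdot(\text{type (A)})$ and $f^{-1}\cdot(\text{type (A)})$ are again type (A); $f^{-1}\cdot(\text{type (B)})$ is type (B) for $i=0$ (by the absorbing relation); and $(\text{type (A)})\cdot s$ splits via $(s^*)^js=(s^*)^{j-1}-(s^*)^{j-1}(1-s^*s)$ into type (A) plus a type (D) contribution coming from $(1-ss^*)(s^*)^{j-1}(1-s^*s)\in\soc(A)$. The main obstacle is moving $s^*$ past $f^{-1}$, which is needed for $s^*\cdot(\text{type (A)})$, $(\text{type (B)})\cdot s^*$, and $(\text{type (A)})\cdot f^{-1}$ with $j\ge1$. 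Writing $f(s)=1+sg(s)$ with $g\in K[s]$, the computation $s^*f(s)=s^*+g(s)-(1-s^*s)g(s)$ combined with right-multiplication by $f(s)^{-1}$ yields
\[
s^*f(s)^{-1} = s^* - g(s)f(s)^{-1} + (1-s^*s)g(s)f(s)^{-1}.
\]
For $s^*\cdot(\text{type (A)})$ with $i=0$, applying this and using $s^*(1-ss^*)=0$, the commutativity of $g$ with $f^{-1}$, and the absorbing identity expresses the result as a finite sum of type (A) and type (C) terms; for $i\ge1$, one first reduces via $s^*s^i=s^{i-1}-(1-s^*s)s^{i-1}$, producing a type (A) plus a type (D) contribution (from $(1-s^*s)s^{i-1}(1-ss^*)(s^*)^j\in\soc(A)$). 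A mirror identity $f(s)s^*=s^*+g(s)-g(s)(1-ss^*)$ handles $(\text{type (B)})\cdot s^*$, and inducting on the power $j$ of $s^*$ with the analogous formula handles $(\text{type (A)})\cdot f^{-1}$.

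For part (4), observe that in every closure computation above, an input of type (A), (C), or (D) produces only (A)-, (C)-, or (D)-terms in the output (never a pure type (B) term). Hence $W_I$ is closed under the ring generators of $\Sigma^{-1}A$, is a two-sided ideal of $\Sigma^{-1}A$, and contains $1-ss^*$; it follows that $W_I=\Sigma^{-1}I$. Symmetrically, $W_J=\Sigma^{-1}J$, whence $W=W_I+W_J=\Sigma^{-1}S_2$, giving part (3) as well.
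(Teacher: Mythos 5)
Your proposal is correct and follows essentially the same route as the paper: parts (1) and (2) are argued identically, and for (3)--(4) both proofs show that the linear span of the terms (A)--(D) is stable under left and right multiplication by the generators $s$, $s^*$, $f^{-1}$ of $\Sigma^{-1}A$, using the same kind of identities to move $s^*$ past $f(s)^{-1}$ at the cost of correction terms lying in $\soc(A)$ or of type (C). One cosmetic slip: in the case $s^*\cdot(\text{A})$ with $i\ge 1$ the correction term is $(1-s^*s)s^{i-1}f^{-1}(1-ss^*)(s^*)^j$, which is of type (C) rather than type (D) when $f\ne 1$; since type (C) belongs to every span under consideration, this does not affect the argument.
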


 \begin{proof}
  (1) For $n\in \Z^+$, consider the projection $A\to h_nA$ given by $x\mapsto h_nx$. With the identification $h_nA\cong M_{n+1}(K)$
  obtained in Lemma \ref{lem:centralprojhn}, we see that, for $f\in \Sigma$,   $h_nf$ corresponds to a unipotent matrix in $M_{n+1}(K)$, and thus
  to an invertible matrix. Hence, $\rho_0$ maps the elements of $\Sigma$ to invertible elements of $\prod_{n=1}^\infty M_{n+1}(K)$. This yields existence and uniqueness of $\rho$. The ideal $\bigoplus_{n=1}^\infty M_{n+1}(K)$ is already contained in the image of $\rho_0$, by Proposition \ref{prop:S_1=socA}.

 (2) Since $\rho_0$ is injective, so is $\iota_\Sigma$. Observe that the $\iota_{\Sigma}(h_n)$ are also central projections in $\Sigma^{-1}A$.
  Take $f\in \Sigma$. Then there are elements $y_n\in h_nA$ such that $(fh_n)y_n =y_n(h_nf)=h_n$. Hence,
  $$\iota_{\Sigma}(f)^{-1}\iota _{\Sigma}(h_n)=\iota_{\Sigma}(f)^{-1}\iota _{\Sigma}(f)\iota_{\Sigma}(h_n)\iota _{\Sigma}(y_n)= \iota_{\Sigma}(y_n)\in h_nA \, . $$
It follows that $\Sigma^{-1}\soc (A) =\iota_{\Sigma}(\soc (A))$, and so $\iota _{\Sigma}$ induces an isomorphism from
  $\soc (A)$ onto $\Sigma^{-1}\soc (A)$.

  From now on, we will identify $\soc (A)$ with $\Sigma^{-1} \soc (A)$.

  (3) It is clear that the set of linear combinations of elements of the forms (A), (B), (C) and (D) contains $\iota _{\Sigma} (S_2)$.
  So, to show that this set equals $\Sigma^{-1} S_2$ it is enough to prove that it is invariant under right and left multiplication by
  the elements $s,s^*, f^{-1}$, where $f\in \Sigma$. We will only check the corresponding property for right multiplication. The proof for
  the left multiplications is similar.

  To deal with a term of the form (A), it is enough to consider the particular case where the term is of the form $(1-ss^*)(s^*)^j$ for some $j\ge 0$.
  Then clearly $(1-ss^*)(s^*)^js^*=(1-ss^*)(s^*)^{j+1}$ is again of the form (A). Also, we have seen in the proof of Lemma \ref{lem:AmodsocA} that
  $(1-ss^*)(s^*)^js$ is congruent modulo $\soc (A)$ to $(1-ss^*)(s^*)^{j-1}$, so it is a linear combination of terms of the form (A) and (D).
  Now take $f$ in $\Sigma $. We will show the result for the term $(1-ss^*)(s^*)^jf^{-1}$ by induction on $j$.
  If $j=0$, we have $(1-ss^*)f= (1-ss^*)$ and so $(1-ss^*)f^{-1}= (1-ss^*)$. Assume that $j>0$ and that the result is true for terms of the form
  $(1-ss^*)(s^*)^kf^{-1}$ for $0\le k\le j-1$. Then using again that $(1-ss^*)(s^*)^js^m\equiv (1-ss^*)(s^*)^{j-m}$ if $m\le j$ and
  $(1-ss^*)(s^*)^js^m\equiv 0$ if $m>j$, where $\equiv$ denotes congruence modulo $\soc (A)$, we get
  $$(1-ss^*)(s^*)^jf\equiv (1-ss^*)(s^*)^j+ \sum _{k=0}^{j-1} \lambda _k (1-ss^*)(s^*)^k\, ,$$
  for some $\lambda _k\in K$. Therefore we get
  $$(1-ss^*)(s^*)^jf^{-1}\equiv (1-ss^*)(s^*)^j -\sum _{k=0}^{j-1} \lambda_k (1-ss^*)(s^*)^{k}f^{-1} \, ,$$
  and the result follows by induction. Note that only terms of the forms (A) and (D) appear in this case.

  Now we consider a term of the form (B). We can assume it is of the form $(1-s^*s)s^jf^{-1}$ for some $j \ge 0$ and $f\in \Sigma$.
  The products $(1-s^*s)s^jf^{-1}s$ and $(1-s^*s)s^jf^{-1}g^{-1}$, for $g\in \Sigma$, are clearly of the form (B).
 So we need to deal with the product $(1-s^*s)s^jf^{-1} s^*$.
  Assume first that $j=0$. Write $f=1+a_1s+\cdots +a_ns^n$. Then we have
  $$f^{-1}s^*= s^*-a_1f^{-1}ss^*-\cdots -a_n f^{-1}s^ns^*\, , $$
  so that
  $$(1-s^*s) f^{-1}s^*= -a_1(1-s^*s)f^{-1}(ss^*)-\cdots -  a_n (1-s^*s) f^{-1} s^ns^*$$
  and we reduce to the study of the case where $j>0$.
  So assume that $j>0$. Observe from \eqref{sis*is} that
  $s^j(s^*)^js^{j-1}  = s^js^*$.
  Using this we obtain
  \begin{align*}
   (1-s^*s)s^jf^{-1}s^* & = (1-s^*s)f^{-1} (s^js^*)= (1-s^*s) f^{-1}s^j(s^*)^j s^{j-1} \\
   & = (1-s^*s)f^{-1}s^{j-1}-(1-s^*s)f^{-1}(1-s^j(s^*)^j)s^{j-1} \, .
  \end{align*}
The first term $(1-s^*s) f^{-1}s^{j-1}$ is of the form (B), so we only have to deal with the second term, namely
$(1-s^*s)f^{-1}(1-s^j(s^*)^j)s^{j-1}$. We have
\begin{align*}
 (1 & -s^*s) f^{-1}(1-s^j(s^*)^j)s^{j-1} = (1-s^*s)f^{-1} \sum_{k=0}^{j-1} (s^k(s^*)^k - s^{k+1}(s^*)^{k+1}) s^{j-1}  \\
& = \sum _{k=0}^{j-1} (1-s^*s)f^{-1}s^k(1-ss^*)(s^*)^{j-1+k} \, ,
 \end{align*}
which is a sum of terms of the form (C). Therefore terms of the form (B) and (C) appear in this case.

  Now consider a term $[(s^*)^i(1-s^*s)][s^jf^{-1}(1-ss^*)(s^*)^k]$ of the form (C). Then the term $[s^jf^{-1}(1-ss^*)(s^*)^k]$ is of the form (A),
  and so when multiplied on the right by $s$, $s^*$ or $g^{-1}$, with $g\in \Sigma$, it becomes a linear combination of terms of the forms (A) and (D).
  Therefore the product $[(s^*)^i(1-s^*s)][s^jf^{-1}(1-ss^*)(s^*)^k]$ will become a linear combination of terms of the forms (C) and (D)
  when multiplied on the right by $s$, $s^*$ or $g^{-1}$, with $g\in \Sigma$.

  Since we have seen that $\soc(A)= \Sigma^{-1} \soc (A)$, the terms of the form (D) are stable under multiplication by elements in $\Sigma^{-1}A$.

 (4) This follows just as in (3).
 \end{proof}

  Observe that $\Sigma^{-1} A/\Sigma ^{-1} S_2\cong \ol{\Sigma}^{-1} (A/S_2)\cong \ol{\Sigma}^{-1} K[x,x^{-1}]\cong K(x)$.
  The linear span of $\mathcal S = \{ s^if(s)^{-1}  \mid i\in \Z^+ , f\in \Sigma \}$ is a subalgebra of $\Sigma ^{-1}A$ isomorphic to
  the localization $K[x]_{(x)}$ of $K[x]$ at the prime ideal $(x)$. Denote this subalgebra by $K[s]_{(s)}$.
   \medskip

We can now describe more precisely the structure of $\Sigma^{-1} A$.

 \begin{lemma}
  \label{lem:IJ}
  Let $\mathcal L $ be a subset of the family $\mathcal S$ described above such that the cosets of the
 elements
 of $\mathcal L$ form a $K$-basis of $K[s]_{(s)} / K[s]$. Let $I$ and $J$ be
 the ideals of $A$ generated by
 $1-ss^*$ and $1-s^*s$ respectively. Then $(\Sigma^{-1}I)(\Sigma^{-1}J)=\soc (A) \subseteq (\Sigma^{-1}J)(\Sigma^{-1}I)$, and the following is a $K$-basis
 for $(\Sigma^{-1}J)(\Sigma^{-1}I)/\soc (A)$:
 $$\mathcal B := \{ (s^*)^i(1-s^*s) \mathfrak f (1-ss^*) (s^*) ^k + \soc (A) \mid \,  i,k\in \Z^+, \;\mathfrak f\in \mathcal L \} .$$
  \end{lemma}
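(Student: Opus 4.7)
The plan is to prove the lemma in three steps: first the two ideal equalities/inclusions; then the spanning statement via explicit matrix computations in the embedding $\rho\colon \Sigma^{-1}A \hookrightarrow \prod_{n\ge 0} M_{n+1}(K)$ of Lemma~\ref{lem:prodembeddSigmaA}(1); and finally linear independence using the hypothesis on $\mathcal L$.

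For the ideal relations, the key factorization is
$$q_{-i,j} = [s^i(1-ss^*)(s^*)^i]\cdot[(s^*)^j(1-s^*s)s^j]= [(s^*)^j(1-s^*s)s^j]\cdot [s^i(1-ss^*)(s^*)^i],$$
where the two equalities come from the commutativity of the projections $s^i(s^*)^i$ and $(s^*)^js^j$. Since $\soc(A)=\bigoplus h_nA$ is generated by the $q_{-i,j}$'s, this gives $\soc(A)\subseteq IJ\cap JI\subseteq (\Sigma^{-1}I)(\Sigma^{-1}J)\cap(\Sigma^{-1}J)(\Sigma^{-1}I)$. For the reverse inclusion $(\Sigma^{-1}I)(\Sigma^{-1}J)\subseteq \soc(A)$, I would expand both factors using the spanning sets (A),(C),(D) and (B),(C),(D) from Lemma~\ref{lem:prodembeddSigmaA}(4). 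The pivotal computation is that $(1-ss^*)(s^*)^m(1-s^*s)\in \soc(A)$: in $M_{n+1}(K)$ this product equals $e_{11}(s^*)^me_{n+1,n+1}$, which is $e_{1,n+1}$ if $n=m$ and $0$ otherwise. Every product of an (A) or (C) term with a (B) or (C) term contains a middle factor of the shape $(1-ss^*)(s^*)^{a+b}(1-s^*s)$, and since $\soc(A)=\Sigma^{-1}\soc(A)$ is an ideal of $\Sigma^{-1}A$, the whole product lands in $\soc(A)$.

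For the basis claim, I compute the image of a generic basis element under $\rho$: in $M_{n+1}(K)$, using $(s^*)^i(1-s^*s)=e_{n+1-i,n+1}$ and $(1-ss^*)(s^*)^k=e_{1,k+1}$ (for $n\ge \max(i,k)$), and the fact that any element of $K[s]_{(s)}$ acts as a lower-triangular Toeplitz matrix with $(p,q)$-entry equal to the $(p-q)$-th coefficient of its power-series expansion, I get
$$(s^*)^i(1-s^*s)\,\mathfrak f\,(1-ss^*)(s^*)^k\Big|_{M_{n+1}(K)} = d_n(\mathfrak f)\cdot e_{n+1-i,\, k+1},$$
where $d_n(\mathfrak f)$ is the coefficient of $s^n$ in $\mathfrak f\in K[s]_{(s)}$. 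For spanning, I take any product of a (B)/(C) term with an (A)/(C) term and verify by the same kind of matrix calculation that, for all large enough $n$, it also has the form $\varphi(n)\,e_{n+1-i,k+1}$ for some $\varphi=d_n(\mathfrak h)$ with $\mathfrak h\in K[s]_{(s)}$ read off from the parameters of the product (for instance, in (B)(A) the middle collapses to $s^{b+c}(fg)^{-1}$ using that $s$ commutes with polynomials in $s$). Agreement in all but finitely many $M_{n+1}(K)$ means agreement modulo $\soc(A)$, so the product equals $(s^*)^i(1-s^*s)\mathfrak h(1-ss^*)(s^*)^k$ mod $\soc(A)$. Decomposing $\mathfrak h = p(s) + \sum \lambda_t\mathfrak f_t$ with $p\in K[s]$ and $\mathfrak f_t\in \mathcal L$, the polynomial part contributes an element of $\soc(A)$ (by the same calculation that gave $(1-ss^*)(s^*)^m(1-s^*s)\in \soc(A)$), while the $\mathcal L$-part gives a $K$-linear combination from $\mathcal B$.

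Linear independence is the cleanest step. If $\sum c_{ik\mathfrak f}(s^*)^i(1-s^*s)\mathfrak f(1-ss^*)(s^*)^k$ lies in $\soc(A)$, then its image in $M_{n+1}(K)$ vanishes for all sufficiently large $n$. Since the matrix units $e_{n+1-i,k+1}$ indexed by the pairs $(i,k)$ are all distinct (for $n$ large enough), we get $\sum_{\mathfrak f}c_{ik\mathfrak f}\,d_n(\mathfrak f)=0$ for every fixed $(i,k)$ and all large $n$. This says the formal power series $\sum_{\mathfrak f}c_{ik\mathfrak f}\mathfrak f\in K[s]_{(s)}$ has only finitely many nonzero coefficients, i.e.\ it lies in $K[s]$; the defining property of $\mathcal L$ then forces every $c_{ik\mathfrak f}=0$. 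The main obstacle is a bookkeeping one: verifying that each of the four product types (B)(A), (B)(C), (C)(A), (C)(C) yields a single scaled matrix unit $\varphi(n)\,e_{n+1-i,k+1}$ in $M_{n+1}(K)$ whose $n$-dependent scalar is exactly the $s^n$-coefficient of an explicit element of $K[s]_{(s)}$, so that the power-series identification really does match products of the ideals to the proposed basis.
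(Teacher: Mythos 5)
Your treatment of the ideal identities and of linear independence is correct and essentially the paper's own argument: the factorization of $q_{-i,j}$ as a product of an element of $I$ and an element of $J$ in either order gives $\soc(A)\subseteq IJ\cap JI$; the middle factor $(1-ss^*)(s^*)^m(1-s^*s)\in\soc(A)$ forces $(\Sigma^{-1}I)(\Sigma^{-1}J)\subseteq\soc(A)$; and identifying the $M_{n+1}(K)$--component of $(s^*)^i(1-s^*s)\mathfrak f(1-ss^*)(s^*)^k$ with $d_n(\mathfrak f)\,e_{n+1-i,k+1}$ reduces independence to the fact that an element of $K[s]_{(s)}$ with only finitely many nonzero power-series coefficients lies in $K[s]$, after which the defining property of $\mathcal L$ finishes the job. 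This is exactly the route taken in the paper.

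The gap is in your spanning step. You infer ``the product is congruent to $(s^*)^i(1-s^*s)\mathfrak h(1-ss^*)(s^*)^k$ modulo $\soc(A)$'' from the fact that the two sides have the same image in all but finitely many components of $\prod_n M_{n+1}(K)$. That inference needs $\ker\rho\subseteq\soc(A)$ on $\Sigma^{-1}A$ (i.e., that $\rho$ separates points of $\Sigma^{-1}A$ modulo the socle). But at this point only the injectivity of $\rho_0$ on $A$ itself is available (Proposition \ref{prop:S_1=socA}); injectivity of $\rho$ on the universal localization is Proposition \ref{prop:socSigmaA}, whose proof \emph{uses} Lemma \ref{lem:IJ}, so your argument is circular. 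A priori $\Sigma^{-1}A$ could contain elements outside $\soc(A)$ that $\rho$ sends into $\bigoplus_n M_{n+1}(K)$, and then ``agreement of images for large $n$'' would not give congruence mod $\soc(A)$. Your computation is legitimate only for the (B)(A) case, where you give the purely algebraic collapse $f^{-1}g^{-1}s^c=s^c(fg)^{-1}$ producing a genuine (C) term; for the remaining product types the identities must likewise be derived algebraically inside $\Sigma^{-1}A$, using the normal-form manipulations from the proof of Lemma \ref{lem:prodembeddSigmaA}(3) (for instance, the (C)(C) middle is literally $(1-ss^*)(s^*)^{c+a'}(1-s^*s)\in\soc(A)$, and terms such as $(1-ss^*)(s^*)^cg^{-1}$ reduce modulo $\soc(A)$ to combinations of terms $(1-ss^*)(s^*)^lh^{-1}$). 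This is how the paper justifies that $(\Sigma^{-1}J)(\Sigma^{-1}I)$ is spanned modulo $\soc(A)$ by type (C) terms; only then does your decomposition $\mathfrak h=p(s)+\sum_t\lambda_t\mathfrak f_t$ complete the spanning claim.
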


\begin{proof} Since the ideal $\soc(A)$ is generated by the projections
$$q_{-i,j} = s^i (1-ss^*) (s^*)^{i+j} (1-s^*s) s^j = (s^*)^j (1-s^*s) s^{j+i} (1-ss^*) (s^*)^i \, ,$$
we have $\soc(A) \subseteq IJ \subseteq (\Sigma^{-1}I)(\Sigma^{-1}J)$, and similarly $\soc(A) \subseteq (\Sigma^{-1}J)(\Sigma^{-1}I)$. The inclusion $(\Sigma^{-1}I)(\Sigma^{-1}J) \subseteq \soc(A)$ follows from Lemma \ref{lem:prodembeddSigmaA} and the fact that
 $(1-ss^*)(s^*)^j (1-s^*s)\in \soc (A)$ for all $j\ge 0$.

 The elements in $(\Sigma^{-1}J)(\Sigma^{-1}I)/\soc (A)$ must be linear combinations of terms of the form (C), and so they can be certainly
 expressed as linear combinations
 of the elements in $\mathcal B$. It remains to show that the latter are linearly independent. We make use of the homomorphism $\rho$ of Lemma \ref{lem:prodembeddSigmaA}(1).
It will be enough to show that  the elements
 $$b_{i,k,\mathfrak f} := \rho ((s^*)^i(1-s^*s) \mathfrak f (1-ss^*) (s^*) ^k ), \; \;  i,k\in \Z^+, \; \mathfrak f\in \mathcal L  $$
are linearly independent modulo $\bigoplus _{n=0}^{\infty} M_{n+1} (K) $.
Suppose that $\mathfrak f = s^j f^{-1}$,
 for some $j\in \Z^+$ and $f\in \Sigma $. Let $f(x)^{-1} = 1+\sum _{l=1}^{\infty} \beta_lx^l$ be the expansion of the element $f(x)^{-1}$ in the
 formal power series algebra $K[[x]]$. Then, for $n > j$,  the component of $b_{i,k,\mathfrak f}$  in $M_{n+1}(K)$ is exactly
 $\beta_{n-j}e_{n+1-i,k+1}$.

 Consider a linear combination of the elements $b_{i,k,\mathfrak f}$ which lies in  $\bigoplus _{n=0}^{\infty} M_{n+1} (K) $, and write this relation in the form
 $$\sum_{i,k,m=0}^t \alpha_{ikm} b_{i,k,\mathfrak f_m} \; \in \; \bigoplus _{n=0}^{\infty} M_{n+1} (K) ,$$
 where  the $\alpha_{ikm} \in K$ and $\mathfrak f_0, \dots, \mathfrak f_t$ are distinct elements of $\mathcal L$. Write $\mathfrak f_m = s^{j_m} f_m(s)^{-1}$ with $j_m \in \Z^+$ and $f_m\in \Sigma $, and let $f_m(x)^{-1} = 1+ \sum_{l=1}^\infty \beta_{ml} x^l$ in $K[[x]]$. For fixed $i$, $k$, we see from the previous paragraph that $\sum_{m=0}^t \alpha_{ikm} \beta_{m,n-j_m} = 0$ for $n \gg 0$. Thus, $\sum_{m=0}^t \alpha_{ikm} x^{j_m} f_m(x)^{-1} \in K[x]$. Since the $\mathfrak f_m$ are linearly independent modulo $K[s]$, it follows that $\alpha_{ikm} = 0$ for all $m$. Therefore the $b_{i,k,\mathfrak f}$ are indeed linearly independent modulo $\bigoplus _{n=0}^{\infty} M_{n+1} (K) $, as desired.
 \end{proof}

 \begin{proposition}
 \label{prop:socSigmaA} With the above notation, we have that
$\soc (A)$ is an essential ideal of $\Sigma^{-1}A$, and $\soc (\Sigma^{-1}A) =  \soc (A)$.
Moreover, the $K$-algebra homomorphism $\rho \colon \Sigma^{-1} A\to \prod_{n=0}^{\infty} M_{n+1}(K)$ is injective.
\end{proposition}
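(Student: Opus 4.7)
The plan is to extract injectivity of $\rho$ as the central statement and deduce the other two assertions from it. The key observation is that each $h_n$ is a central idempotent with $h_n\Sigma^{-1}A=h_nA\cong M_{n+1}(K)$, because elements of $\Sigma$ act invertibly on $h_nA$; hence for every $x\in\Sigma^{-1}A$ the product $xh_n$ lies in $h_nA\subseteq\soc(A)$ and corresponds exactly to the $n$-th component of $\rho(x)$. Therefore $\rho(x)=0$ iff $xh_n=0$ for all $n$, so injectivity of $\rho$ is equivalent to essentiality of $\soc(A)$ (as a right, and by symmetry as a left, ideal) in $\Sigma^{-1}A$. Granting essentiality, $\soc(\Sigma^{-1}A)\subseteq\soc(A)$ is the general fact that the socle lies in every essential left ideal, while $\soc(A)\subseteq\soc(\Sigma^{-1}A)$ follows from $q_{-i,j}(\Sigma^{-1}A)q_{-i,j}=q_{-i,j}h_{i+j}Aq_{-i,j}=Kq_{-i,j}$, so each $q_{-i,j}$ is a minimal idempotent also in $\Sigma^{-1}A$.

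To prove $\rho$ is injective, suppose $\rho(x)=0$ and argue in two cases using the quotient $\Sigma^{-1}A/\Sigma^{-1}S_2\cong K(x)$. If $x\notin\Sigma^{-1}S_2$, there is $y\in\Sigma^{-1}A$ with $xy=1+t$ for some $t\in\Sigma^{-1}S_2$, so applying $\rho$ forces $\rho(t)=-(I_{n+1})_n$. Writing $t$ via Lemma \ref{lem:prodembeddSigmaA}(3) as a finite $K$-linear combination of terms of types (A)--(D) and computing each image in $M_{n+1}(K)$ under $s\mapsto x_n$, one finds: a type (A) term $f^{-1}s^i(1-ss^*)(s^*)^j$ populates column $j+1$, a type (B) term populates row $n+1-i$, a type (C) term gives the single entry $\beta^{(h)}_{n-j}e_{n+1-i,k+1}$, and type (D) vanishes for all but finitely many $n$. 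For $n$ sufficiently large there exists $q\in\{1,\dots,n+1\}$ outside the finite union of columns $\{j_\alpha+1\}$ (from (A)) and rows $\{n+1-i_\beta\},\{n+1-i_\gamma\}$ (from (B), (C)); the diagonal entry $(q,q)$ of $\rho(t)_n$ must then be $0$, contradicting $-I_{n+1}$.

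In the case $x\in\Sigma^{-1}S_2$, decompose $x=x_A+x_B+x_C+x_D$ via Lemma \ref{lem:prodembeddSigmaA}(3). An auxiliary input is the injection $\phi\colon K[s]_{(s)}\hookrightarrow\Sigma^{-1}A$, which one checks by composing with $\rho$ and observing that $K[s]_{(s)}\hookrightarrow K[[x]]$ is detected by the truncations $K[x]/(x^{n+1})$, so a nonzero element of $K[s]_{(s)}$ has nonzero image. Step one: for $k$ fixed and $n$ large, entry $(k,q)$ of $\rho(x)_n$ receives contributions only from type (A) with $j_\alpha=q-1$; equating to zero for all $k$ produces the power-series identity $\sum_\alpha\lambda_\alpha x^{i_\alpha}f_\alpha(x)^{-1}=0$ in $K[[x]]$, and via $\phi$ this kills the type-(A) piece of $x$ with $j=q-1$, giving $x_A=0$ after summing over $q$. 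Step two: for fixed $i_0,k_0$ and $n$ very large, entry $(n+1-i_0,n+1-k_0)$ of $\rho(x)_n$ receives contributions only from type (B) with $i_\beta=i_0$ (type (A) misses column $n+1-k_0$, type (C) misses since $k_\gamma=n-k_0$ exceeds the fixed $k_\gamma$-set, type (D) is gone); the symmetric power-series argument yields $x_B=0$. Step three: now $x=x_C+x_D$, and since $x_D\in\soc(A)$ has finite support, $\rho(x_C)_n=0$ for all large $n$; vanishing of entries $(n+1-i_0,k_0+1)$ forces $\sum_\gamma\lambda_\gamma y^{j_\gamma}h_\gamma(y)^{-1}$ to be a polynomial in $K[y]$, so by $\phi$ each $(i_0,k_0)$-piece of $x_C$ collapses to $(s^*)^{i_0}(1-s^*s)p(s)(1-ss^*)(s^*)^{k_0}$ for some $p\in K[s]$, which lies in $\soc(A)$ since $(1-s^*s)s^l(1-ss^*)$ is a matrix unit in $h_lA$. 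Hence $x_C+x_D\in\soc(A)$, and injectivity of $\rho$ on $\soc(A)$ (Proposition \ref{prop:S_1=socA}) gives $x_C+x_D=0$, whence $x=0$.

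The main obstacle is the second case: the three nontrivial term types populate $\prod M_{n+1}(K)$ with very different $n$-dependencies (a fixed column for (A), a moving row for (B), a single moving entry for (C)), so isolating their contributions requires carefully chosen entry positions and ranges of $n$, and the resulting vanishing identities in $K[[x]]$ must be pulled back to equalities in $\Sigma^{-1}A$ through the auxiliary injection $\phi$.
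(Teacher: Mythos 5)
Your proof is correct, and it reaches the conclusion by a route that differs from the paper's in its core step. You share with the paper the reduction of all three assertions to injectivity of $\rho$ (via $\rho(x)_n = xh_n$ and the fact that a nonzero ideal of the regular ring $\soc(A)$ cannot square to zero), and your Case 1 is morally the paper's final step: the paper introduces the ideal $L$ of bounded-rank sequences in $\prod_n M_{n+1}(K)$ and uses maximality of $\Sigma^{-1}I+\Sigma^{-1}J$ to get $\rho^{-1}(L)=\Sigma^{-1}S_2$, which is exactly your observation that $\rho(t)_n$ has support confined to boundedly many fixed columns and bottom rows and so cannot equal $-I_{n+1}$. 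The real divergence is in handling $x\in\Sigma^{-1}S_2$: the paper does \emph{not} redo an entry analysis for types (A), (B), (D); it reuses the linear-independence computation of Lemma \ref{lem:IJ} to get injectivity on $(\Sigma^{-1}J)(\Sigma^{-1}I)$ (your type-(C) step), and then bootstraps multiplicatively up the chain $(\Sigma^{-1}J)(\Sigma^{-1}I)\subseteq \Sigma^{-1}I\subseteq \Sigma^{-1}I+\Sigma^{-1}J$, using regularity of $I+J$ and the matrix-unit structure of $\ol I$ to push any element not already handled into $(\Sigma^{-1}J)(\Sigma^{-1}I)\setminus\soc(A)$ by left and right multiplication. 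Your approach instead isolates each term type by choosing entry positions $(k,q)$, $(n+1-i_0,n+1-k_0)$, $(n+1-i_0,k_0+1)$ whose asymptotic behaviour in $n$ separates the types, extracts power-series identities, and pulls them back through the canonical map $K[x]_{(x)}\to\Sigma^{-1}A$ (for which you only need well-definedness, not the injectivity you invoke). What your version buys is self-containedness and uniformity -- one mechanism handles all four types, and it makes transparent why the socle is the exact kernel of the ``bounded part''; what the paper's version buys is economy, since the hard linear-independence computation is done once in Lemma \ref{lem:IJ} and everything else is soft ring theory. Both arguments are complete.
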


\begin{proof} Observe that the proof of Lemma \ref{lem:IJ} gives that $\rho $ induces an injective homomorphism
from $(\Sigma^{-1}J)(\Sigma^{-1} I)/\soc (A)$ into $\prod _{n=0}^{\infty} M_{n+1} (K)/\bigoplus _{n=0}^{\infty} M_{n+1} (K)$, where $I$ and $J$ are
 the ideals of $A$ generated by
 $1-ss^*$ and $1-s^*s$ respectively.
Using this, it follows that $\rho$ is injective on $(\Sigma^{-1}J)(\Sigma^{-1} I)$.

Next, consider an element $x \in \Sigma^{-1}I \setminus (\Sigma^{-1}J)(\Sigma^{-1} I)$. By Lemma \ref{lem:prodembeddSigmaA}, $x = f^{-1}y+z$ for some $f\in \Sigma$, $y \in I \setminus \soc(A)$, and $z\in (\Sigma^{-1}J)(\Sigma^{-1} I)$. Since the ideal $I+J$ of $A$ is regular, $y=yay$ for some $a\in A$. Then $yafz \in I(\Sigma^{-1}J) \subseteq \soc(A)$ by Lemma \ref{lem:IJ}, and so $yafx \in I \setminus \soc(A)$. In view of Lemma \ref{lem:AmodsocA}, there are elements $b,c\in I$ such that $b(yafx)c \equiv 1-ss^*$ modulo $\soc(A)$. We then see from Lemma \ref{lem:IJ} that for any choice of $\mathfrak f \in \mathcal L$, we have
$$(1-s^*s) {\mathfrak f} b(yafx)c \, \in \, (\Sigma^{-1}J)(\Sigma^{-1} I) \setminus \soc(A).$$
Since $\rho$ is injective on $(\Sigma^{-1}J)(\Sigma^{-1} I)$, it follows that $\rho(x) \ne 0$. This shows that $\rho$ is injective on $\Sigma^{-1}I$.

A similar argument now shows that $\rho$ is injective on $\Sigma^{-1}I + \Sigma^{-1}J$.

Finally, consider the ideal
$$L := \biggl\{ c \in \prod _{n=0}^{\infty} M_{n+1} (K) \biggm| \{ \rank(c_n) \mid n\in \Z^+ \} \; \text{is bounded} \biggr\}$$
of $\prod _{n=0}^{\infty} M_{n+1} (K)$, and observe that $\rho$ maps $1-ss^*$ and $1-s^*s$ into $L$. Since $\Sigma^{-1}I + \Sigma^{-1}J$ is a maximal ideal of $\Sigma^{-1}A$, we see that $\rho^{-1}(L) = \Sigma^{-1}I + \Sigma^{-1}J$. This, together with the injectivity of $\rho$ on $\Sigma^{-1}I + \Sigma^{-1}J$, implies that  $\rho $ is injective.

The statements about the socle of $\Sigma^{-1} A$ follow.
\end{proof}

We can now obtain one of our main results.

 \begin{theorem}
  \label{thm:exchangerealization} Let $K$ be a field with involution {\rm(}where the identity involution is allowed\/{\rm)}, let $A=K[\mathcal F]$ be the semigroup algebra of the monogenic free inverse monoid with its natural involution, and define $\Sigma \subset A$ as before. Then the universal localization
  $\Sigma^{-1}A$ is an exchange {\rm(}but not regular\/{\rm)} $K$-algebra and $V(\Sigma^{-1}A)\cong \calM$.
   \end{theorem}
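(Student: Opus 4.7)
The strategy is to apply Proposition \ref{prop:generaliso} with $R = \Sigma^{-1}A$ and the monoid homomorphism $\tau \colon \calM \to V(\Sigma^{-1}A)$ obtained from Theorem \ref{theo:monfreeinvsem}: compose the isomorphism $\calM \cong V(A)$ with the map $V(A) \to V(\Sigma^{-1}A)$ induced by $\iota_\Sigma$. Since $\iota_\Sigma(1) = 1$, we have $\tau(u) = [1_{\Sigma^{-1}A}]$. By Lemma \ref{lem:prodembeddSigmaA}(2) and Proposition \ref{prop:socSigmaA}, $\iota_\Sigma$ identifies $\soc(A)$ with $\soc(\Sigma^{-1}A) \cong \bigoplus_{n\ge 1} M_n(K)$, which is essential in $\Sigma^{-1}A$. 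Since the isomorphism $\calM \cong V(A)$ sends $\ped(\calM)$ onto $V(\soc(A))$, it follows that $\tau$ restricts to an isomorphism of $\ped(\calM)$ onto $V(\soc(\Sigma^{-1}A))$.

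The central technical step is to identify the quotient $\Sigma^{-1}A/\soc(A)$ with the regular algebra $Q$ from Construction \ref{constrQ}. I would proceed by using the universal properties of $\Sigma^{-1}A$ and $Q_1 = \Sigma^{-1}K\langle x,y\mid xy=1\rangle$ to produce compatible surjective $K$-algebra homomorphisms $\Sigma^{-1}A \to Q_1$ (killing $\Sigma^{-1}I$, where the image of $s^*$ becomes a right inverse of the image of $s$ and the polynomials in $\Sigma$ remain invertible) and $\Sigma^{-1}A \to Q_1^{\mathrm{opp}}$ (killing $\Sigma^{-1}J$). Combining these via the pullback defining $Q$ gives a surjection $\pi \colon \Sigma^{-1}A \to Q$ with $\soc(A) \subseteq \ker\pi$. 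Lemma \ref{lem:IJ} provides an explicit $K$-basis for $\Sigma^{-1}S_2/\soc(A)$; matching this with the corresponding basis of $\soc(Q) = I_1 \oplus I_1^{\mathrm{opp}}$, together with the identification $\Sigma^{-1}A/\Sigma^{-1}S_2 \cong K(x) \cong Q/\soc(Q)$, yields $\ker\pi = \soc(A)$. By Proposition \ref{prop:realizingMbar}, $V(Q) \cong \Mbar \cong \calM/\ped(\calM)$, and tracking the images of $\xbar_n$, $\ybar$, $\zbar$ through both paths confirms that the induced map $\overline{\tau}$ agrees with this isomorphism.

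Once this identification is in place, the exchange property of $\Sigma^{-1}A$ follows from the standard principle that if $I$ is a regular ideal of $R$ and $R/I$ is an exchange ring, then $R$ is an exchange ring, since idempotents lift automatically over a regular ideal. Here the socle is regular, and the quotient $Q$ is regular. Proposition \ref{prop:generaliso} then yields that $\tau$ is an isomorphism, so $V(\Sigma^{-1}A) \cong \calM$. For the non-regularity, the cleanest route when $K$ is uncountable is to invoke Proposition \ref{prop:calMnotrealizable-uncountable} directly, since $\calM$ admits a faithful state and fails to be cancellative. For general $K$, one must instead exhibit a non-regular element of $\Sigma^{-1}A$ by a direct computation, or argue via the structure of the intermediate ideal $\Sigma^{-1}S_2/\soc(A)$, which, unlike $\soc(Q) \subseteq Q$, is not regular.

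The main obstacle is the identification $\Sigma^{-1}A/\soc(A) \cong Q$: while setting up the surjection $\pi$ from universal properties is routine, verifying $\ker\pi = \soc(A)$ requires careful bookkeeping with the normal forms from Lemmas \ref{lem:AmodsocA}, \ref{lem:prodembeddSigmaA}, and \ref{lem:IJ}. A secondary subtlety is ensuring that $\overline{\tau}$ really equals the composed isomorphism rather than some non-trivial automorphism of $\Mbar$, which is handled by checking images of the generators explicitly.
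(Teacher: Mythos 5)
Your overall architecture (reduce to Proposition \ref{prop:generaliso}, identify the pedestal with $V(\soc)$, understand the quotient modulo the socle via the pullback algebra $Q$) matches the paper's, but there is a genuine error at the central step. You claim that the surjection $\pi\colon \Sigma^{-1}A \to Q$ has kernel exactly $\soc(A)$, i.e.\ that $\Sigma^{-1}A/\soc(A)\cong Q$. This is false, and it cannot be true: $\soc(A)$ is a regular ideal and $Q$ is regular, so by \cite[Lemma 1.3]{vnrr} your identification would force $\Sigma^{-1}A$ to be regular, contradicting the very statement you are proving. The actual situation, which is the point of Lemma \ref{lem:IJ}, is that $S:=\Sigma^{-1}A/\soc(A)$ surjects onto $Q$ with kernel $\mathcal J = \Sigma^{-1}\ol I\cap \Sigma^{-1}\ol J$, and Lemma \ref{lem:IJ} exhibits an explicit nonzero $K$-basis for a piece of $\mathcal J$ (namely $(\Sigma^{-1}\ol J)(\Sigma^{-1}\ol I)$, indexed by $K[s]_{(s)}/K[s]$), so $\mathcal J\ne 0$. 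You invoke Lemma \ref{lem:IJ} as if it helped you prove $\ker\pi=\soc(A)$, when in fact it proves the opposite. Your closing remark that $\Sigma^{-1}S_2/\soc(A)$ is "not regular, unlike $\soc(Q)$" is the correct intuition, but it is inconsistent with your main claim: under your identification that quotient would be $\soc(Q)=I_1\oplus I_1^{\mathrm{opp}}$, which is regular.

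This error propagates into both remaining assertions. For the exchange property you cannot apply the principle "regular ideal plus exchange quotient" with quotient $Q$; you must first show that $S$ itself is an exchange ring, which the paper does by noting that $\mathcal J^2=0$ (since $(\Sigma^{-1}\ol I)(\Sigma^{-1}\ol J)=0$ by Lemma \ref{lem:IJ}), so $\mathcal J$ is a radical, hence exchange, ideal, $S/\mathcal J\cong Q$ is regular, and idempotents lift modulo nilpotent ideals; only then does \cite[Corollary 2.4]{Aext} give that $\Sigma^{-1}A$ is exchange. For non-regularity, no case split on the cardinality of $K$ is needed and no "direct computation" is missing: $\mathcal J$ is a nonzero nilpotent ideal of $S$, so $S$, and hence $\Sigma^{-1}A$, is not regular over any field. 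Finally, the monoid computation survives because $V(\mathcal J)=0$, so $V(S)\cong V(Q)\cong \Mbar$ anyway; your map $\ol\tau$ can then be matched with $\psi$ on generators as you indicate, though the paper prefers to define $\tau$ directly on the generators of $\calM$ by explicit idempotents rather than routing through Theorem \ref{theo:monfreeinvsem}.
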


\begin{proof} Set $R :=\Sigma^{-1} A$.
 We first show that $R$ is an exchange ring. We will use \cite[Theorem 2.2]{Aext}, which says that, whenever $J$
 is an ideal of a ring $T$, the ring $T$ is an exchange ring if and only if both  $J$ and $T/J$ are exchange rings
 and idempotents from $T/J$ can be lifted to idempotents in $T$. Indeed, in our applications of this result
 we will be under the hypothesis of \cite[Corollary 2.4]{Aext}, in which the lifting of idempotents is automatic.

Set $B=A/\soc (A)$ and $S= \Sigma ^{-1} B= R/\soc (A)= R/\soc (R)$. We first determine the structure of $S$.
Let $Q_0 :=K\langle x,y \mid xy=1 \rangle $ and $Q_1 := \Sigma_0^{-1} Q_0$, where $\Sigma_0 := \{ f\in K[x] \mid f(0) = 1\}$, and let $Q$ be the pullback built in Construction \ref{constrQ}.
Let $I$ and $J$ be the ideals of $A$ generated by $1-ss^*$ and $1-s^*s$ respectively. There is a surjective $K$-algebra
homomorphism $\gamma _1\colon B\to Q_0$ such that $\gamma _1 (\ol{s})= x$ and
$\gamma _1 (\ol{s}^*) = y$. The kernel of $\gamma _1$ is the ideal $\ol{I} := I/\soc(A)$. Similarly, there is a surjective $K$-algebra homomorphism $\gamma _2\colon
B\to Q_0^{\text{opp}}$ such that $\gamma _2 (\ol{s})= x$ and $\gamma _2 (\ol{s}^*)= y$, with kernel $\ol{J} := J/\soc(A)$.

These maps induce unique homomorphisms, also denoted by $\gamma _1 $ and $\gamma _2$, from $S=\Sigma^{-1} B $ onto
$Q_1$ and $
\Sigma^{-1}( Q_0 ^{\text{opp}})= Q_1^{\text{opp}}$,
with respective kernels $\Sigma^{-1}\ol{I}$ and $\Sigma^{-1}\ol{J}$.

Clearly we have $\pi_1'\circ \gamma _1 = \pi_2'\circ \gamma _2$, so the universal property of the pullback gives us a unique
homomorphism $\gamma \colon S \to Q$ such that $\gamma _i = \pi_i \circ \gamma $ for $i=1, 2$. Surjectivity of $\gamma _1$ and $\gamma _2$, together with the fact that $\gamma_1( \Sigma^{-1} \ol{J}) = I_1 = \ker(\pi'_1)$,
implies surjectivity of $\gamma $, so $Q$ is a factor ring of $S$. Observe that the kernel of $\gamma $ is precisely
$\mathcal J := \Sigma^{-1}\ol{I} \cap \Sigma^{-1} \ol{J}$. Note that $\mathcal J ^2=0$ because, by Lemma \ref{lem:IJ}, we have
$(\Sigma^{-1}\ol{I})(\Sigma^{-1}\ol{J})=0$. Now by Proposition \ref{prop:realizingMbar}, $Q$ is a regular ring. Since $\mathcal J$
is a radical ring, it is also an exchange (non-unital) ring by \cite[Example 2, p. 412]{Aext}. Moreover, idempotents lift
modulo nilpotent ideals, so by an application of \cite[Theorem 2.2]{Aext} or  \cite[Proposition 1.5]{nichol} we get that $S$ is
an exchange ring. By Lemma \ref{lem:IJ}, we have $\mathcal J\ne 0$, and so $S$ is not regular. Also observe that, by \cite[Proposition 1.4]{AGOP},
we have $V(Q) \cong V (S/\mathcal J) \cong V(S)/ V(\mathcal J)= V(S)$, so by Proposition  \ref{prop:realizingMbar}, there is an
isomorphism $\widetilde{\psi}\colon \Mbar \to V(S)$ sending $\ol{x}_n$ to $[\ol{s}^{n+1}(\ol{s}^*)^{n+1}]$, $\ol{y}$ to $[1-\ol{s}\ol{s}^*]$, and
$\ol{z}$ to $[1-\ol{s}^*\ol{s}]$.

Now we have that $\soc (R) = \soc (A)$ is a regular (non-unital) ring, and we have shown that  $S=R/\soc (R)$ is an exchange ring.
So it follows from \cite[Corollary 2.4]{Aext} that $R$ is an exchange ring. Since $S$ is not regular, we see that $R$ is not regular either.

Finally, there is a monoid homomorphism
 $\tau \colon \calM\to V(R)$ such that
\begin{align*}
\tau(x_n) &= [s^{n+1}(s^*)^{n+1}] = [(s^*)^{n+1}s^{n+1}]  &\tau(y_n) &= [s^n(1-ss^*)(s^*)^n] = [(1-ss^*)(s^*)^n s^n]  \\
\tau(a_{n+1}) &= [q_{0,n}] = [q_{-n,0}]  &\tau(z_n) &= [(s^*)^n(1-s^*s)s^n] = [s^n(s^*)^n(1-s^*s)]
\end{align*}
for all $n \in \Z^*$. To see this, observe that the indicated elements of $V(R)$ satisfy the relations \eqref{MECrelns}. Note also that $\tau$ sends the order-unit $u = x_0+y_0$ to $[1_R]$. Now by \cite[Lemma 4.8]{TW}, the map $(m_n)_{n=0}^\infty \mapsto \sum_{n=0}^\infty m_n a_{n+1}$ from $(\Z^+)^{(\Z^+)} \rightarrow \ped(\calM)$ is an isomorphism. In view  of Proposition \ref{prop:S_1=socA}, we also have an isomorphism $(\Z^+)^{(\Z^+)} \rightarrow V(\soc(R)) = V(\soc(A))$, given by $(m_n)_{n=0}^\infty \mapsto \sum_{n=0}^\infty m_n [q_{0,n}]$. Hence, $\tau$ restricts to an isomorphism from $\ped (\calM)$ onto $V(\soc (R))$.
Moreover, $\soc (R)$ is an essential ideal of $R$ (by Proposition \ref{prop:socSigmaA}) such that $\soc (R)\cong \bigoplus_{i=1}^{\infty} M_i(K)$, and the map
induced by $\tau$ from $\Mbar = \calM/ \ped(\calM)$ to $V(R/\soc (R))=V(S)$ is easily seen to agree with the map $\widetilde{\psi}$ described before,
which is a monoid isomorphism. Therefore we conclude from Proposition \ref{prop:generaliso} that $\tau $ is an isomorphism from $\calM$ onto
$V(R)$. The proof is complete.
\end{proof}

\begin{remark}
 \label{noinvo} The involution of $A$ cannot be extended to $R=\Sigma^{-1}A$. Indeed, suppose there is an involution $*$ on $R$
 extending the involution on $A$. Since $\soc (A)$ is $*$-invariant,
 the involution on $B=A/\soc (A)$ would extend to an involution on $S=R/\soc (A)$.
 Now the ideals
 $\widetilde{I}= \Sigma^{-1}\ol{I}$ and $\widetilde{J}= \Sigma^{-1} \ol{J}$ would be self-adjoint ideals of $S$, and
 $$0\ne (\widetilde{J}\widetilde{I})^*= \widetilde{I}\widetilde{J} =0, $$
 a contradiction.
 \end{remark}

\section{Realizing $\calM$ by a von Neumann regular ring}
\label{sect:realizMbyvnrrr}

In this section, we realize the monoid $\calM$ as the $V$-monoid of a regular $F$-algebra for any \emph{countable} field $F$.
The key idea is to use a ``skew'' version of the construction performed in the previous section, so that the fundamental relations
are satisfied only in a ``relaxed way''. More precisely, the regular algebra will be built as a subalgebra of $T :=\prod_{n=1}^{\infty} M_n(F)$
so that each of the relations holds modulo $\soc(T)=\bigoplus _{n=1}^{\infty} M_n(F)$. To make this construction we need the countability of the field $F$.

\medskip

Let $F$ be a countable field, and let $f_0=1, f_1, f_2, \dots$ be an
enumeration of the set
$$\Sigma := \{ f\in F[x] \mid f(0)=1 \}.$$
Also, set $F_i=f_0f_1\cdots f_i$ for all $i$. We do not assume any involution on $F$ or its matrix rings; the symbols $w^*_n$ below just denote elements paired with elements $w_n$ in our construction.

We will define  certain
elements $w_n$, $w_n^*$ in $M_n(F)$ for all $n$. We start with
$w_1=w_1^* :=0\in F$. Now for all $n>1$, we put
$$w_n := \sum _{i=1}^{n-1} e_{i+1,i} \in M_n(F).$$

For each $k\in \Z^+$, let $(1,a_{k,1}, \dots ,a_{k,N(k)})$  be the row of
coefficients of the polynomial $F_k$, so that the degree of $F_k$ is
$N(k)$.  Choose positive integers $M(1)< M(2)< \cdots$ such that
$$M(k) \ge 2(k+1)N(k)$$
for all $k$. Define
$$w_n^* := \sum _{i=1}^{n-1} e_{i, i+1} \in M_n(F)$$
for $n=2,\dots , M(1)-1$. In this range, we have $1-w_nw_n^*= e_{11}$ and $1-w_n^*w_n= e_{nn}$. For $k\in\N$ and $M(k)\le n< M(k+1)$, define
$$w_n^* :=\sum _{i=1}^{n-1}e_{i,i+1}- \sum _{j=1}^{N(k)} a_{k,j} (e_{j1} + e_{n,n-j+1}) \in M_n(F).$$
Then we have, for $M(k)\le n< M(k+1)$,
\begin{equation}
\label{rank1eqn}
1-w_nw_n^* = e_{11}+\sum _{j=1}^{N(k)} a_{kj} e_{j+1, 1}  \qquad\quad \text{and} \qquad\quad
1-w_n^*w_n= e_{nn} +\sum _{j=1}^{N(k)} a_{kj}e_{n,n-j}\, .
\end{equation}

\begin{lemma}
\label{wnwn*}
{\rm (a)} $w_nw_n^*w_n= w_n$ and $w_n^*w_nw_n^*= w_n^*$ for all $n$.

{\rm (b)} $1-w_nw_n^*$ and $1-w_n^*w_n$ are rank one idempotents for each $n$, and they are orthogonal for $n\ge 2$.

{\rm (c)} Let $0\le l\le i$. Then $(w_n^*)^lw_n^i(1-w_nw_n^*)= w_n^{i-l}(1-w_nw_n^*)$ for $n\gg 0$.

{\rm (d)} Let $0\le i< l$. Then $(w_n^*)^lw_n^i(1-w_nw_n^*)= 0$ for $n\gg 0$.

{\rm (e)} Let $0\le l\le j$. Then $(1-w_nw_n^*)(w_n^*)^jw_n^l= (1-w_nw_n^*)(w_n^*)^{j-l}$ for $n\gg 0$.

{\rm (f)} Let $0\le j<l$. Then $(1-w_nw_n^*)(w_n^*)^jw_n^l= 0$ for $n\gg 0$.
\end{lemma}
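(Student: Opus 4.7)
The plan is to prove all six parts by direct computation using the explicit formulas for $w_n$, $w_n^*$, $1 - w_n w_n^*$, and $1 - w_n^* w_n$, handling the initial range $2 \le n < M(1)$ (where $w_n^*$ has no correction terms and $w_n w_n^* = 1 - e_{11}$, $w_n^* w_n = 1 - e_{nn}$) separately from the generic range $M(k) \le n < M(k+1)$. The calibration $M(k) \ge 2(k+1)N(k)$ gives $N(k) \le n/(2(k+1))$ for $n$ in that range, which is precisely what is needed to keep the correction terms $-\sum a_{k,j}(e_{j1} + e_{n,n-j+1})$ away from the ``interior'' of the matrix once $n$ is large.

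For (a), I would reduce to showing $w_n(1 - w_n^* w_n) = 0$ and $w_n^*(1 - w_n w_n^*) = 0$, since $w_n w_n^* w_n = w_n - w_n(1 - w_n^* w_n)$ and the analogous identity on the other side. The first is immediate from \eqref{rank1eqn}: the matrix $1 - w_n^* w_n$ is supported in row $n$, while $w_n = \sum_{i=1}^{n-1} e_{i+1,i}$ has no entry in row $n+1$, so left multiplication by $w_n$ annihilates it. The second is a column-$1$ check: the shift-up contribution $(\sum e_{i,i+1})(1-w_nw_n^*)$ and the correction contribution $\sum a_{k,l}e_{l1}(1-w_nw_n^*)$ are equal (both having first column $(a_{k,1},\dots,a_{k,N(k)},0,\dots)^T$), while the remaining correction $\sum a_{k,l}e_{n,n-l+1}(1-w_nw_n^*)$ vanishes once $n > 2N(k)$, so that the index $n-l+1$ falls outside the column-$1$ support of $1-w_nw_n^*$.

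Part (b) then follows directly: idempotence is formal from (a), rank-one visibility is built into \eqref{rank1eqn} (one matrix lives in column $1$, the other in row $n$), and orthogonality is a short support argument. The product $(1-w_nw_n^*)(1-w_n^*w_n)$ vanishes for $n \ge 2$ because the column-$1$ support of the left factor and the row-$n$ support of the right factor cannot share an index. The reverse product has its only possibly nonzero entry at $(n,1)$, where the relevant sum ranges over $k \in \{n-N(k),\dots,n\} \cap \{1,\dots,N(k)+1\}$; this intersection is empty for $n > 2N(k)+1$, again guaranteed by $M(k) \ge 2(k+1)N(k)$.

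For (c)--(f), I would argue by induction on $l$, using the telescoping identity $w_n^*w_n = 1 - (1-w_n^*w_n)$ to peel off one factor at a time. In each inductive step the product splits into a main term (matching the desired expression by the inductive hypothesis) and an error term of the shape $(1-w_n^*w_n)w_n^m(1-w_nw_n^*)$ (for (c), (d)) or $(1-w_nw_n^*)(w_n^*)^m(1-w_n^*w_n)$ (for (e), (f)); in either case the error term is a row-$n$ matrix multiplied against a column-$1$ matrix, possibly with a shift in between, and a direct support calculation shows it vanishes provided $n - N(k) > m + N(k)$, which holds for $n \gg 0$ by the bound on $N(k)$. Part (d) then follows from (c) with $l = i$, giving $(w_n^*)^i w_n^i (1 - w_n w_n^*) = 1 - w_n w_n^*$, together with one further application of $w_n^*$, which annihilates by (a); higher powers of $w_n^*$ then annihilate as well. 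Parts (e) and (f) are the right-handed mirrors of (c) and (d), proved by the same induction applied on the right. The only real obstacle is bookkeeping, namely confirming at each inductive step that the supports of the intermediate matrices do not touch rows $1$ or $n$ (respectively columns $1$ or $n$) prematurely, and the calibration $M(k) \ge 2(k+1)N(k)$ is engineered precisely to make this uniform across all $n \gg 0$.
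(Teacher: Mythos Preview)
Your proposal is correct and follows essentially the same approach as the paper. The paper proves (a) by the same direct computation (phrased via the observations $w_n e_{nj}=0$ and $e_{i1}w_n=0$), handles (c) and (e) by establishing the single-step identity and then iterating---which is exactly your induction on $l$ via $w_n^*w_n = 1-(1-w_n^*w_n)$ rewritten---and derives (d), (f) from (c), (e) just as you do.
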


\begin{proof} (a) Direct computation. It is helpful to note that $w_n e_{nj} =0$ for all $j$ and that $e_{i1}w_n=0$ for all $i$.

(b) It is immediate from (a) that $1-w_nw_n^*$ and $1-w_n^*w_n$ are idempotents, and they clearly have rank one. Orthogonality is clear from \eqref{rank1eqn} when $n\ge M(1)$, and from the
relations $1-w_nw_n^*= e_{11}$ and $1-w_n^*w_n= e_{nn}$ when $2\le n< M(1)$.

(c) The relation obviously holds when $l=0$. Assume $i\ge l>0$. Choose $k_0$ such that $M(k_0)> 2i$, and take $n\ge M(k_0)$. Then $M(k)\le n< M(k+1)$ for some $k\ge k_0$.
Now $i< M(k)/2 \le n/2$ and $N(k)\le M(k)/2(k+1)\le n/2(k+1) \le n/4$.

All nonzero entries of $1-w_nw_n^*$ lie in the first column, in rows $1$ through $N(k)+1$. Since $i<n/2< n-N(k)$, we see
that forming $w_n^i(1-w_nw_n^*)$ just amounts to shifting the first column of $1-w_nw_n^*$ down $i$ positions without cutting off any nonzero entry.

Nonzero entries of $w_n^*$ lie in the superdiagonal, or in the first column, or in the last row beyond column $n-N(k)$. Since $i$ is positive, $e_{j1}w_n^i(1-w_nw_n^*) =0$ for all $j$. Also, $N(k)\le n/4$ implies $n-2N(k) \ge n/2> i$, so $N(k)+i < n-N(k)$, and hence $e_{nj} w_n^i (1-w_nw_n^*) =0$ for all $j> n-N(k)$. Thus,
\begin{equation}
\begin{aligned}
\label{leftmultwn*}
w_n^*w_n^i(1-w_nw_n^*) &= \biggl(\, \sum_{m=1}^{n-1} e_{m,m+1} \biggr) w_n^i(1-w_nw_n^*)  \\
 &= (1-e_{nn}) w_n^{i-1}(1-w_nw_n^*) = w_n^{i-1}(1-w_nw_n^*).
\end{aligned}
\end{equation}

We can repeat \eqref{leftmultwn*} (i.e., multiplying on the left by $w_n^*$) as long as $i-1>0$. Part (c) follows.

(d) By (c), $(w_n^*)^iw_n^i(1-w_nw_n^*)= 1-w_nw_n^*$ for $n\gg 0$. Since $w_n^*(1-w_nw_n^*) =0$, (d) follows.

(e) Assume $j\ge l >0$. Choose $k_0$ such that $M(k_0)> 2j$, and take $n\ge M(k_0)$. Then $M(k)\le n< M(k+1)$ for some $k\ge k_0$. Now $j<n/2$ and $N(k)\le n/4$.

First, observe that for $0\le m\le j$, all nonzero entries of $(1-w_nw_n^*)(w_n^*)^m$ lie in the upper
left $(N(k)+1)\times (m+1)$ corner. This is clear for $m=0$, so assume it holds for some $m<j$. Since $m+1<n$,
we have $(1-w_nw_n^*)(w_n^*)^m e_{nt} =0$ for all $t$, so $(1-w_nw_n^*)(w_n^*)^{m+1}$ consists of new entries
in the left column down to row $N(k)+1$, together with the upper left $(N(k)+1)\times (m+1)$ block of $(1-w_nw_n^*)(w_n^*)^m$
shifted one column to the right. Therefore $(1-w_nw_n^*)(w_n^*)^{m+1}$ has the claimed form.

In particular, $(1-w_nw_n^*)(w_n^*)^{j-1}e_{nt} =0$ for all $t$. Since $e_{s1}w_n=0$ for all $s$, it follows that
$$(1-w_nw_n^*)(w_n^*)^jw_n= (1-w_nw_n^*)(w_n^*)^{j-1} \biggl( \sum_{i=1}^{n-1} e_{i,i+1} \biggr) w_n=  (1-w_nw_n^*)(w_n^*)^{j-1} \biggl( \sum_{i=1}^{n-1} e_{ii} \biggr) .$$
This equals $(1-w_nw_n^*)(w_n^*)^{j-1}$, because the latter has no nonzero entries in its last column.

Therefore $(1-w_nw_n^*)(w_n^*)^jw_n= (1-w_nw_n^*)(w_n^*)^{j-1}$. Part (e) follows.

(f) This follows from (e).
\end{proof}

\begin{corollary}
\label{higherwnwn*}
Let $i,j>0$. Then
$$(1-w_n^i(w_n^*)^i)(1-(w_n^*)^jw_n^j) = (1-(w_n^*)^jw_n^j)(1-w_n^i(w_n^*)^i) =0$$
for $n\gg 0$.
\end{corollary}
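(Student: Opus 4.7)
The plan is to reduce the corollary to Lemma \ref{wnwn*}(c) and (e) by writing both factors as telescoping sums of simpler building blocks.

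Concretely, first I would observe the telescoping identities
\begin{align*}
1 - w_n^i(w_n^*)^i &= \sum_{k=0}^{i-1}\bigl(w_n^k(w_n^*)^k - w_n^{k+1}(w_n^*)^{k+1}\bigr) = \sum_{k=0}^{i-1} w_n^k(1-w_nw_n^*)(w_n^*)^k, \\
1 - (w_n^*)^jw_n^j &= \sum_{l=0}^{j-1}\bigl((w_n^*)^lw_n^l - (w_n^*)^{l+1}w_n^{l+1}\bigr) = \sum_{l=0}^{j-1} (w_n^*)^l(1-w_n^*w_n)w_n^l,
\end{align*}
which hold identically in $M_n(F)$ (no asymptotics needed). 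Multiplying these sums together, the first product becomes
\[
(1-w_n^i(w_n^*)^i)(1-(w_n^*)^jw_n^j) = \sum_{k=0}^{i-1}\sum_{l=0}^{j-1} w_n^k(1-w_nw_n^*)(w_n^*)^{k+l}(1-w_n^*w_n)w_n^l,
\]
and analogously the opposite product is a finite sum of terms of the form $(w_n^*)^l(1-w_n^*w_n)w_n^{l+k}(1-w_nw_n^*)(w_n^*)^k$.

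Next I would show that each individual summand vanishes once $n$ is large. Expanding $(1-w_n^*w_n)w_n^l = w_n^l - (w_n^*)w_n^{l+1}$ (wait, simply $(1-w_n^*w_n)w_n^l = w_n^l - w_n^*w_n^{l+1}$ is not quite the move—rather I use $w_n^*(w_n^*)^{k+l}w_n \cdot w_n^l$ structure); more cleanly, write
\[
(1-w_nw_n^*)(w_n^*)^{k+l}(1-w_n^*w_n)w_n^l = (1-w_nw_n^*)(w_n^*)^{k+l}w_n^l - (1-w_nw_n^*)(w_n^*)^{k+l+1}w_n^{l+1}.
\]
By Lemma \ref{wnwn*}(e) applied twice (with $j=k+l$ and $j=k+l+1$ respectively), both terms equal $(1-w_nw_n^*)(w_n^*)^k$ for $n\gg 0$, so their difference is $0$. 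A symmetric argument using Lemma \ref{wnwn*}(c) handles the opposite-order product, via
\[
(w_n^*)^l(1-w_n^*w_n)w_n^{l+k}(1-w_nw_n^*) = (w_n^*)^lw_n^{l+k}(1-w_nw_n^*) - (w_n^*)^{l+1}w_n^{l+k+1}(1-w_nw_n^*),
\]
both pieces reducing to $w_n^k(1-w_nw_n^*)$ for $n\gg 0$.

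Finally, since $i$ and $j$ are fixed, only finitely many pairs $(k,l)$ appear, and each summand vanishes for all sufficiently large $n$; taking the maximum of these thresholds yields the conclusion that the full products vanish for $n\gg 0$. There is no real obstacle here: the proof is essentially a bookkeeping argument, and the only mild subtlety is recognizing the right telescoping decomposition and matching indices when invoking parts (c) and (e) of Lemma \ref{wnwn*}.
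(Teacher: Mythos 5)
Your proof is correct and follows essentially the same route as the paper: both arguments telescope $1-w_n^i(w_n^*)^i$ into the pieces $w_n^m(1-w_nw_n^*)(w_n^*)^m$ and then invoke Lemma \ref{wnwn*}(c),(e); the paper telescopes only one factor (showing each piece is fixed under right multiplication by $(w_n^*)^jw_n^j$), whereas you telescope both factors and check that each cross term cancels, which is a cosmetic difference. The index bookkeeping in your application of parts (c) and (e) is accurate, and the final ``take the maximum of finitely many thresholds'' step is exactly what is needed.
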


\begin{proof} By Lemma \ref{wnwn*}(e), $(1-w_nw_n^*)(w_n^*)^{m+j}w_n^j= (1-w_nw_n^*)(w_n^*)^m$ for $m<i$ and $n\gg 0$. Hence,
\begin{align*}
(w_n^m(w_n^*)^m- w_n^{m+1}(w_n^*)^{m+1})(w_n^*)^jw_n^j &= w_n^m(1-w_nw_n^*)(w_n^*)^{m+j}w_n^j  \\
 &= w_n^m(1-w_nw_n^*)(w_n^*)^m= w_n^m(w_n^*)^m- w_n^{m+1}(w_n^*)^{m+1}
 \end{align*}
for $m<i$ and $n\gg 0$. Summing these equations gives $(1-w_n^i(w_n^*)^i)(w_n^*)^jw_n^j= 1-w_n^i(w_n^*)^i$ for $n\gg 0$. Thus, $(1-w_n^i(w_n^*)^i)(1-(w_n^*)^jw_n^j) = 0$ for $n\gg 0$. The second part of the corollary follows from Lemma \ref{wnwn*}(c) in the same way.
\end{proof}

Set $T:=\prod_{n=1}^\infty
M_n(F)$, and write $w :=(w_n)\in T$ and $w^* :=(w_n^*)\in T$. Observe
that every element of the set
$$\Sigma(w) := \{ f(w) \mid f \in \Sigma \}$$
 is invertible in $T$. Let $R$
be the unital subalgebra of $T$ generated by $\soc (T)$, $w$, $w^*$
and all the inverses of the elements of $\Sigma(w)$. Observe that $\soc(R_R)= \soc(_RR)= \soc(T)$.

To simplify computations in $R$, write $\equiv$ for congruence modulo $\soc(T)$. From parts (c)--(f) of Lemma \ref{wnwn*}, we obtain
\begin{equation}
\label{ww*modsoc}
\begin{aligned}
(w^*)^lw^i(1-ww^*) &\equiv \begin{cases} w^{i-l}(1-ww^*)  &(l\le i)\\  0  &(l>i) \end{cases}  \\
(1-ww^*)(w^*)^jw^l &\equiv \begin{cases} (1-ww^*)(w^*)^{j-l}  &(l\le j)\\  0  &(l>j). \end{cases}
\end{aligned}
\end{equation}

\begin{proposition}
\label{structure} Let $M_1$ be the ideal of $R$ generated by
$1-ww^*$ and $\soc (T)$, and let $M_2$ be the ideal of $R$ generated
by $1-w^*w$ and $\soc (T)$. Then $M_1M_2= M_1\cap M_2=\soc(T)$.
\end{proposition}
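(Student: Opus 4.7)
Since $M_1 M_2 \subseteq M_1 \cap M_2$ is automatic and $\soc(T) \subseteq M_1 \cap M_2$ holds by the very construction of the $M_i$, the proposition reduces to proving $\soc(T) \subseteq M_1 M_2$ and $M_1 \cap M_2 \subseteq \soc(T)$. The first inclusion is immediate: $\soc(T) = \bigoplus_{n=1}^\infty M_n(F)$ is a direct sum of simple rings, hence an idempotent ideal, so $\soc(T) = \soc(T)^2 \subseteq M_1 \cdot M_2$.

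For $M_1 \cap M_2 \subseteq \soc(T)$ the plan is to first establish a normal form mirroring Lemma~\ref{lem:prodembeddSigmaA}. Using the exact identities $(1-ww^*)w = 0 = w(1-w^*w)$ (consequences of $ww^*w=w$) together with $(1-ww^*)f(w)^{-1} = 1-ww^*$ and $f(w)^{-1}(1-w^*w) = 1-w^*w$ (since $f(0)=1$), and the fact that each $f(w)^{-1}$ commutes with $w$, one shows by stability under left and right multiplication by the generators of $R$ that every element of $M_1$ is a $F$-linear combination of terms of type (A) $f(w)^{-1}w^i(1-ww^*)(w^*)^j$, type (C) $(w^*)^i(1-w^*w)w^j f(w)^{-1}(1-ww^*)(w^*)^k$, or type (D) $\soc(T)$, and analogously every element of $M_2$ decomposes via (B) $(w^*)^i(1-w^*w)w^j g(w)^{-1}$, (C), (D). The key step is then to show that each (C)-term lies in $\soc(T)$. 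By \eqref{rank1eqn}, for $M(k) \le n < M(k+1)$ the projections $1-w_nw_n^* = u_k e_1^T$ and $1-w_n^*w_n = e_n v_k^T$ are rank-one, so the $n$-th component of a (C)-term is a scalar multiple of a fixed rank-one matrix, the scalar being the coefficient of $x^{n-j-2}$ in $F_k(x)^2 f(x)^{-1}$. Once $k$ exceeds the enumeration index $m$ with $f = f_m$, $f$ divides $F_k$, so $F_k^2/f$ is a polynomial of degree $2N(k)-\deg f$; the growth hypothesis $M(k) \ge 2(k+1)N(k)$ then forces $n-j-2 > 2N(k)-\deg f$ for all $n \ge M(k)$ with $k$ large, making the coefficient vanish. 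Hence each (C)-term has nonzero components in only finitely many $M_n(F)$, placing it in $\soc(T)$.

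With the (C)-terms absorbed into $\soc(T)$, the images $\bar M_1, \bar M_2$ in $R/\soc(T)$ are spanned solely by (A)- and (B)-type images. To finish I would invoke the homomorphism $\phi = (\phi_1, \phi_2): R/\soc(T) \to Q$, with $Q$ the pullback of Construction~\ref{constrQ}, where $\phi_1: R/\soc(T) \to Q_1$ and $\phi_2: R/\soc(T) \to Q_1^{\mathrm{opp}}$ each send $\bar w \mapsto x$ and $\bar w^* \mapsto y$, so that $1-\bar w\bar w^* \mapsto (0,\, 1-yx)$ and $1-\bar w^*\bar w \mapsto (1-yx,\, 0)$; well-definedness follows from verifying that all relations of $R/\soc(T)$ survive under $\phi$ (using $xy = 1$ in $Q_1$ and $y \circ x = 1$ in $Q_1^{\mathrm{opp}}$, together with $\pi_1'\circ\phi_1 = \pi_2'\circ\phi_2$). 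An element $\bar x \in \bar M_1 \cap \bar M_2$ is, by the normal form, a nontrivial (A)-combination whose $\phi_2$-image $\sum \alpha_{i,j,f}\, y^j(1-yx)x^i f(x)^{-1}$ in $Q_1^{\mathrm{opp}}$ must vanish; the $F$-linear independence of these expressions, verified on the standard representation of $Q_1$ on $F[[t]]$ in which $y^j(1-yx)x^i$ realises the rank-one operator $t^n \mapsto \delta_{n,i}\,t^j$, forces all $\alpha_{i,j,f} = 0$, so $\bar x = 0$ in $R/\soc(T)$, i.e.\ $x \in \soc(T)$. The main obstacle throughout is the matrix computation showing (C)-terms lie in $\soc(T)$: this is where the explicit form of $w_n^*$ via the polynomials $F_k$ and the growth condition $M(k) \ge 2(k+1)N(k)$ both play essential roles and distinguish $R$ from the related ring $\Sigma^{-1}A$ of the previous section, in which the analogue of (C) is genuinely nonzero modulo socle.
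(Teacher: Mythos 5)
Your reduction to $\soc(T)\subseteq M_1M_2$ (idempotence of $\soc(T)$) and $M_1\cap M_2\subseteq\soc(T)$ is correct, and the computational heart of your argument coincides with the paper's: a normal form for $M_1$ and $M_2$ modulo $\soc(T)$, plus the observation that the growth condition $M(k)\ge 2(k+1)N(k)$ kills the ``cross terms.'' Indeed your key computation is right: for $M(k)\le n<M(k+1)$ one has $1-w_nw_n^*=F_k(w_n)e_{11}$ and $1-w_n^*w_n=e_{nn}F_k(w_n)$, so the $n$-th component of a (C)-term is $e_{nn}\,(x^jF_k^2/f)(w_n)\,e_{11}$, a multiple of $e_{n1}$ by the coefficient of $x^{n-1-j}$ in $F_k^2/f$, which vanishes since $\deg(F_k^2/f)\le 2N(k)<n-1-j$ for $n\gg0$. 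The paper packages this differently but equivalently: it derives the normal form \eqref{M1form} and shows directly that every generator of $M_1$ has its $n$-th component supported in an upper-left corner of size $<n/2$ for $n\gg0$ (and dually for $M_2$ in a lower-right corner), which yields $M_1\cap M_2\subseteq\soc(T)$ and $M_1M_2\subseteq\soc(T)$ in one stroke.

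The genuine gaps are in your concluding step via the pullback $Q$. First, the homomorphisms $\phi_1,\phi_2$ out of $R/\soc(T)$ cannot be obtained by ``verifying that relations survive'': $R$ is a concrete subalgebra of $\prod_nM_n(F)$ with no presentation, so there is no universal property to invoke. The maps that do exist naturally go the other way ($Q_1\to R/M_1$, as in the proof of Theorem \ref{thm:realizing}), and inverting them requires knowing, e.g., that $1-w^*w\notin M_1$ --- which is exactly the corner-support information you are trying to bypass. Second, the elements $y^j(1-yx)x^if(x)^{-1}$ are \emph{not} $F$-linearly independent in $Q_1$: for example $(1-x)^{-1}-x(1-x)^{-1}-1=0$. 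What the representation on $F[t]$ actually gives is that $\sum_{i,f}\alpha_{ijf}\,y^j(1-yx)x^if^{-1}=0$ forces each rational function $\sum_{i,f}\alpha_{ijf}\,x^if(x)^{-1}$ to vanish, not that all $\alpha_{ijf}=0$; one must then add the component identity $f(w_n)^{-1}w_n^i(1-w_nw_n^*)=(x^iF_k/f)(w_n)e_{11}$ to conclude that the corresponding (A)-combination itself lies in $\soc(T)$. Both defects are repairable, but the simplest repair makes the detour through $Q$ unnecessary: once the (C)-terms are absorbed into $\soc(T)$, the same component formula shows every (A)-combination is supported in a small upper-left corner and every (B)-combination in a small lower-right corner for $n\gg0$, so their intersection (and product) is $\soc(T)$ --- which is precisely the paper's argument.
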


\begin{proof}
We aim to prove that
\begin{enumerate}
\item For any element $z\in M_1$, the matrix $z_n$ for $n\gg 0$ has all its nonzero entries in
an upper left corner of size less than $(n/2) \times (n/2)$.
\end{enumerate}
 A similar argument
gives that
\begin{enumerate}
\item[(2)] For any element $z\in M_2$, the matrix $z_n$ for $n\gg 0$ has all
its nonzero entries in a lower right corner of size less than $(n/2) \times (n/2)$.
\end{enumerate}
These properties clearly imply the proposition. Before proving them, we establish simplified forms for the elements of $M_1$.

Set $A:= \sum_{j\ge 0} F(1-ww^*)(w^*)^j$. We claim that
\begin{equation}
\label{A+soc}
(1-ww^*)R+\soc(T)= A+\soc(T).
\end{equation}
To prove this, it suffices to show that $A+\soc(T)$ is a right ideal of $R$. Obviously $A+\soc(T)$ is closed under right multiplication by $w^*$, and it follows from \eqref{ww*modsoc} that $A+\soc(T)$ is closed under right multiplication by $w$. It remains to show that
$$(1-ww^*)(w^*)^jf^{-1} \in A+\soc(T)$$
for $j\ge0$ and $f\in \Sigma(w)$. Assume that $\deg f >0$.

First, observe that if $f=1+a_1w+\cdots +a_tw^t$ we have
$$(1-ww^*)f=1-ww^*,$$
so that $(1-ww^*)f^{-1}=1-ww^* \in A+\soc(T)$. This covers the case $j=0$. For $j>0$, we have
\begin{align*}
(1-ww^*)(w^*)^jf &= (1-ww^*) \bigl( (w^*)^j+ a_1(w^*)^jw +\cdots+ a_t(w^*)^jw^t \bigr)  \\
 &\equiv (1-ww^*) \biggl( (w^*)^j+ \sum_{l=1}^{\min\{j,t\}} a_i (w^*)^{j-l} \biggr)
 \end{align*}
by \eqref{ww*modsoc}. Consequently,
\begin{align*}
(1-ww^*)(w^*)^jf^{-1} & \equiv (1-ww^*) \biggl[ (w^*)^jf -  \sum_{l=1}^{\min\{j,t\}} a_i (w^*)^{j-l} \biggr] f^{-1}  \\
 & \equiv (1-ww^*)(w^*)^j -  \sum_{l=1}^{\min\{j,t\}} a_i (1-ww^*)(w^*)^{j-l} f^{-1} \,,
 \end{align*}
which is in $A+\soc(T)$ by induction on $j$.

We have now established \eqref{A+soc}. In particular, it follows that $M_1= RA+\soc(T)$.

Now set $B:= \sum_{l,i\ge0} \sum_{g\in\Sigma(w)} (w^*)^l w^i g^{-1} A$. We claim that
\begin{equation}
\label{B+soc}
M_1= B+\soc(T).
\end{equation}
To prove this, it suffices to show that $B+\soc(T)$ is an ideal of $R$. It is clearly a right ideal, since $A+\soc(T)$ is a right ideal of $R$. Hence, we need only show that $B+\soc(T)$ is closed under left multiplication by $w$, $w^*$, and $f^{-1}$ for $f\in \Sigma(w)$. Closure under left multiplication by $w^*$ is built in.

As for closure under left multiplication by $w$, we need $w (w^*)^l w^i g^{-1} A \subseteq B+\soc(T)$ for $l,i\ge 0$ and $g\in \Sigma(w)$. This is clear if $l=0$, so assume that $l>0$. In that case,
\begin{align*}
w (w^*)^l w^i g^{-1} A &= \bigl[ 1-(1-ww^*) \bigr] (w^*)^{l-1} w^i g^{-1} A  \\
 &\subseteq (w^*)^{l-1} w^i g^{-1} A + (1-ww^*)R \subseteq B+A+\soc(T)= B+\soc(T).
 \end{align*}
 In fact, we shall need a bit more than this, namely
 \begin{equation}
 \label{bitmore}
 w^j (w^*)^l w^i g^{-1} A \subseteq \begin{cases} (w^*)^{l-j} w^i g^{-1} A + \sum_{m<j} w^mA +\soc(T)  &(j\le l)\\  w^{i+j-l}g^{-1} A + \sum_{m<j} w^mA +\soc(T)  &(j>l) \end{cases}
 \end{equation}
for $i,j,l\ge0$, and $g\in \Sigma(w)$, as we see from the above by induction on $j$.

 It remains to show that $f^{-1} (w^*)^l w^i g^{-1} A \subseteq B+\soc(T)$ for $l,i\ge 0$ and $f,g\in \Sigma(w)$. Assume that $\deg f>0$. When $l=0$, we have
 $$f^{-1} w^i g^{-1} A = w^i (fg)^{-1} A \subseteq B$$
 because $fg\in \Sigma(w)$. Now suppose that $l>0$, and write $f=1+a_1w+\cdots +a_tw^t$. Then
 $$fw^*= w^*+ a_1ww^*+ \cdots+ a_tw^tw^* = w^*+ (a_1+ \cdots+ a_tw^{t-1}) - (a_1+ \cdots+ a_tw^{t-1})(1-ww^*),$$
 and consequently
\begin{equation}
\label{finvw*}
f^{-1}w^*= w^* - f^{-1}(a_1+ \cdots+ a_tw^{t-1}) + f^{-1}(a_1+ \cdots+ a_tw^{t-1})(1-ww^*).
\end{equation}
Now $w^* \bigl( (w^*)^{l-1} w^i g^{-1} A \bigr) \subseteq B$ by definition of $B$, while
$$f^{-1}w^j (1-ww^*) \bigl( (w^*)^{l-1} w^i g^{-1} A \bigr) \subseteq f^{-1}w^j (1-ww^*)R \subseteq f^{-1}w^j A +\soc(T) \subseteq B+\soc(T)$$
for all $j$, by \eqref{A+soc} and the definition of $B$. Finally,
\begin{align*}
f^{-1} w^j \bigl( (w^*)^{l-1} w^i g^{-1} A \bigr) &\subseteq \begin{cases} f^{-1} \bigl( (w^*)^{l-1-j} w^i g^{-1} A + \sum_{m<j} w^mA +\soc(T) \bigr)  &(j< l)\\  f^{-1} \bigl( w^{i+j-l+1}g^{-1} A + \sum_{m<j}  w^mA +\soc(T) \bigr)  &(j\ge l) \end{cases}  \\
 &\subseteq \begin{cases} f^{-1} (w^*)^{l-1-j} w^i g^{-1} A + \sum_{m<j} w^m f^{-1}A +\soc(T)  &(j< l)\\   w^{i+j-l+1}(fg)^{-1} A + \sum_{m<j} w^m f^{-1}A +\soc(T)  &(j\ge l) \end{cases}  \\
 &\subseteq B+\soc(T)
 \end{align*}
by \eqref{bitmore} and induction on $l$. In view of \eqref{finvw*} and the above inclusions, we obtain
$$f^{-1} (w^*)^l w^i g^{-1} A = f^{-1} w^* \bigl( (w^*)^{l-1} w^i g^{-1} A \bigr) \subseteq B+\soc(A),$$
which concludes the final induction on $l$.

We have now verified \eqref{B+soc}. Taking this together with \eqref{A+soc}, we see that
\begin{equation}
\label{M1form}
M_1= \sum_{l,i,j\ge0} \sum_{g\in\Sigma(w)} F(w^*)^l w^i g^{-1} (1-ww^*)(w^*)^j +\soc(T).
\end{equation}

Property (1) is trivially satisfied for elements $z\in \soc(T)$, since then $z_n=0$ for $n\gg 0$. In view of \eqref{M1form}, it only remains to verify (1) for any $z= (w^*)^l w^i g^{-1} (1-ww^*)(w^*)^j$ where $l,i,j\ge 0$ and $g\in \Sigma(w)$.

Choose $k_0\ge 6$ such that $M(k_0) > 14\max\{i,j\}$ and $g= f_t$ for some $t\le k_0$. Then for all $k\ge k_0$, $g$ divides the polynomial
$F_k$, so $F_k=gG_k$ for some $G_k\in F[x]$. Recall that
$$F_k(x)= 1+a_{k,1}x+ \cdots+ a_{k,N(k)}x^{N(k)}.$$
In particular, $\deg G_k \le \deg F_k = N(k)$.

 For $M(k)\le n< M(k+1)$, we have
 $$F_k(w_n)e_{11} = e_{11}+ \sum_{i=1}^{N(k)} a_{k,i}e_{i+1,1} = 1-w_nw_n^*,$$
 whence
$F_k(w_n)^{-1} (1-w_nw_n^*)=e_{11}$.
It follows that
$$g(w_n)^{-1}(1-w_nw_n^*)= G_k(w_n)F_k(w_n)^{-1}(1-w_nw_n^*)=G_k(w_n)e_{11}\,,$$
so that all nonzero entries of $g(w_n)^{-1}(1-w_nw_n^*)$ lie in the left column and the first $N(k)+1$ rows.

Note that $n\ge M(k_0)\ge k_0\ge 6$ and $N(k)\le M(k)/2(k+1)\le n/14$, while $i< M(k)/14\le n/14$. In particular, $N(k)+i+2 < n/2$.

Since left multiplication by $w_n$ just moves matrix entries down one row, all nonzero entries of $w_n^i g(w_n)^{-1} (1-w_nw_n^*)$ lie in the left column and the first $N(k)+i+1$ rows, and hence in the upper left $(\lfloor n/2\rfloor-1)\times1$ corner. Left multiplication of $w_n^*$ on any matrix concentrated in this corner does not place nonzero entries outside that corner, because $N(k)< n/2$ and $n/2 < n-N(k)$. Thus, all nonzero entries of $(w_n^*)^l w_n^i g(w_n)^{-1} (1-w_nw_n^*)$ lie in the upper left $(\lfloor n/2\rfloor-1)\times1$ corner. Finally, for any matrix whose nonzero entries are in the upper left $(\lfloor n/2\rfloor-1)\times s$ corner for some $s<n$,  right multiplication by $w_n^*$ does not place nonzero entries outside the upper left  $(\lfloor n/2\rfloor-1)\times (s+1)$ corner. Since $j< n/14$ and so $j+2 < n/2$, we conclude that all nonzero entries of $z_n$ lie in the upper left  $(\lfloor n/2\rfloor-1)\times (\lfloor n/2\rfloor-1)$ corner. This establishes property (1)
 .

Property (2) is proved similarly.
\end{proof}

\begin{lemma}
\label{lem:standdecom} Let $R$ be a unital ring, let $n\ge 2$, and
let $a,a^*\in R$ such that $a=aa^*a$ and $a^*=a^*aa^*$. Assume that for
all $1\le i\le n$ we have
\begin{equation}
\label{eq:orth}
(1-a^i(a^*)^i)(1-(a^*)^ia^i)=0=(1-(a^*)^ia^i)(1-a^i(a^*)^i).
\end{equation}
Then the following properties hold:
\begin{enumerate}
\item $(a^i(a^*)^i)((a^*)^ja^j)= ((a^*)^ja^j)(a^i(a^*)^i)$ for all $1\le i,j\le
n$.
\item $a^i=a^i(a^*)^ia^i$ and $(a^*)^i= (a^*)^ia^i(a^*)^i$ for all $1\le i\le
n$. In particular, $a^i(a^*)^i$ and $(a^*)^ja^j$ are commuting
idempotents for all $1\le i,j \le n$, and
$$aa^* \ge a^2(a^*)^2 \ge \cdots\ge a^n(a^*)^n\,, \qquad\qquad a^*a \ge (a^*)^2a^2\ge \cdots\ge (a^*)^na^n \,.$$
\item For $0\le k\le n-1$, set
$f_k=a^k(1-aa^*)(a^*)^k$ and
 $g_k=(a^*)^k(1-a^*a)a^k$. Then
 \begin{enumerate}
 \item $f_0,f_1,\dots,f_{n-1},g_0,g_1,\dots,g_{n-1}$ are pairwise orthogonal idempotents.
 \item $af_ja^*=f_{j+1}$ and $a^*g_ja=g_{j+1}$ for $0\le j\le n-2$.
 \item $a^*f_ja=f_{j-1}$ and $ag_ja^*=g_{j-1}$ for $1\le j \le n-1$.
 \item $f_0\sim f_1\sim \cdots\sim f_{n-1}$ and $g_0\sim g_1\sim \cdots\sim g_{n-1}$.
 \end{enumerate}
\end{enumerate}
\end{lemma}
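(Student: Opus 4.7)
Denote $p_i := a^i(a^*)^i$ and $q_i := (a^*)^i a^i$. The partial isometry hypotheses $a=aa^*a$ and $a^*=a^*aa^*$ translate to the four support identities $(1-p_1)a=0$, $a^*(1-p_1)=0$, $a(1-q_1)=0$, $(1-q_1)a^*=0$, and expanding $(1-p_i)(1-q_i)=0=(1-q_i)(1-p_i)$ yields $p_iq_i=q_ip_i=p_i+q_i-1$, i.e., the diagonal case of (1). My strategy is to prove (2) and (1) together by induction on $i$, interleaving them, and then derive (3) via compact forms for $f_k$ and $g_k$.

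At stage $i$, first prove (2): the key computation is
\[
a q_i \;=\; (aa^*)(a^*)^{i-1}a^{i-1}\cdot a \;=\; p_1 q_{i-1} a \;=\; q_{i-1} p_1 a \;=\; q_{i-1} a,
\]
using the commutation $p_1q_{i-1}=q_{i-1}p_1$ supplied by (1) at stage $i-1$ and the identity $p_1a=a$. Iterating,
\[
a^i q_i \;=\; a^{i-1}(aq_i) \;=\; (a^{i-1}q_{i-1})a \;=\; a^{i-1}\cdot a \;=\; a^i
\]
by (2) at stage $i-1$, giving $a^i=a^i(a^*)^ia^i$; the dual identity for $(a^*)^i$ is symmetric, and idempotency of $p_i,q_i$ follows. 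The chain relations $p_lp_m=p_mp_l=p_{\max(l,m)}$ fall out immediately: for $l\le m$, $p_la^l=a^l$ (from (2)) gives $p_la^m=a^m$, hence $p_lp_m=p_m$; the reverse order uses $q_l(a^*)^l=(a^*)^l$. To then complete (1), note $p_iq_j=q_jp_i=p_i+q_j-1$ is equivalent to $(1-p_i)(1-q_j)=0=(1-q_j)(1-p_i)$. For $i\le j$, the $p$-chain gives $(1-p_i)(1-p_j)=1-p_i=(1-p_j)(1-p_i)$, and inserting the $j$-th orthogonality $(1-p_j)(1-q_j)=0=(1-q_j)(1-p_j)$ yields $(1-p_i)(1-q_j)=(1-p_i)(1-p_j)(1-q_j)=0$ and similarly $(1-q_j)(1-p_i)=0$. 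For $i>j$ the symmetric argument uses the $q$-chain $(1-q_i)(1-q_j)=1-q_j=(1-q_j)(1-q_i)$ together with the $i$-th orthogonality.

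For (3), the decisive observation is that
\[
f_k \;=\; a^k(a^*)^k - a^{k+1}(a^*)^{k+1} \;=\; p_k - p_{k+1}, \qquad g_k \;=\; q_k - q_{k+1},
\]
by direct expansion combined with (2). Since all $p$'s and $q$'s pairwise commute and satisfy $p_iq_j=p_i+q_j-1$, the products $(p_j-p_{j+1})(p_k-p_{k+1})$, $(q_j-q_{j+1})(q_k-q_{k+1})$, and $(p_j-p_{j+1})(q_k-q_{k+1})$ all telescope to zero for $j\ne k$ and to the expected values for $j=k$, giving (3)(a). Part (3)(b) is direct: $af_ja^*=a^{j+1}(1-p_1)(a^*)^{j+1}=f_{j+1}$, dually for $g_j$. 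For (3)(c), use the intertwinings $p_ia=ap_{i-1}$ and $a^*p_i=p_{i-1}a^*$ (derived from (1) together with $a(1-q_1)=0$ and $q_1a^*=a^*$) to compute $a^*f_ja=(p_{j-1}-p_j)q_1=f_{j-1}q_1=f_{j-1}$, where the last equality follows from $p_kq_1=p_k+q_1-1$ applied to both terms. For (3)(d), set $u:=af_j$ and $v:=f_ja^*$; then $uv=f_{j+1}$ by (b), while $vu=f_j(a^*a)f_j=f_jq_1f_j=f_j$ using $f_jq_1=f_j$ and idempotency, so $f_j\sim f_{j+1}$; iterating gives the full chain, and the analogous argument handles the $g_k$'s.

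The main obstacle is the circular dependence between (1) and (2): the inductive proof of (2) at level $i$ depends on the commutation $p_1q_{i-1}=q_{i-1}p_1$ from (1), while the proof of (1) at any level exploits the chain relations coming from (2). This is resolved by the interleaving described above, in which each inductive stage first strengthens (2) (using only (1) at strictly smaller levels) and then strengthens (1) on all new pairs involving $i$ (using (2) up to $i$). A secondary but important technical point is the identification $f_k=p_k-p_{k+1}$ and $g_k=q_k-q_{k+1}$, without which the verifications in (3)(a) would demand much more intricate manipulations of $(1-p_1)$-factors sandwiched between powers of $a$ and $a^*$.
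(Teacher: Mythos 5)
Your proof is correct and follows essentially the same route as the paper: parts (1) and (2) are obtained by the same bootstrapping between the commutation relations and the identities $a^i=a^i(a^*)^ia^i$ (your interleaved induction just makes explicit what the paper compresses into ``we can continue in this way''), and part (3) rests, as in the paper, on the relations $a^i(a^*)^i\cdot(a^*)^ja^j=a^i(a^*)^i+(a^*)^ja^j-1$ together with the two chains of idempotents. The telescoping identification $f_k=p_k-p_{k+1}$, $g_k=q_k-q_{k+1}$ is a clean presentational touch but not a different argument.
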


\begin{proof}
(1), (2) It follows from (\ref{eq:orth}) that
\begin{equation}
\label{eq:commut1} [a^i(a^*)^i,\, (a^*)^ia^i]=0 \quad \forall i\in
\{1,\dots ,n\}.
\end{equation}
It then follows that $a^2(a^*)^2$ and $(a^*)^2a^2$ are
commuting idempotents. For instance,
$$a^2(a^*)^2a^2= a(aa^*)(a^*a)a= (aa^*a)(aa^*a)= a^2$$
and thus $a^2(a^*)^2$ is an idempotent. We clearly have
$a^2(a^*)^2\le aa^*$, so that $1-aa^* \le 1-a^2(a^*)^2$ and
$$(1-aa^*)(1-(a^*)^2a^2)=0=(1-(a^*)^2a^2)(1-aa^*).$$
This shows that $[aa^*,\, (a^*)^2a^2]=0$ and similarly $[a^*a,\,
a^2(a^*)^2]=0$. It follows from these relations that
$a^3(a^*)^3a^3=a^3$ and $(a^*)^3a^3(a^*)^3=(a^*)^3$, and thus
$a^3(a^*)^3$ and $(a^*)^3a^3$ are commuting idempotents. We have
$aa^*\ge a^2(a^*)^2\ge a^3(a^*)^3$ and $a^*a\ge (a^*)^2a^2\ge
(a^*)^3a^3$ and thus
$$(1-a^i(a^*)^i)(1-(a^*)^3a^3)=0=(1-(a^*)^3a^3)(1-(a^*)^ia^i)$$
for $i=1,2$, and so
$$[a^i(a^*)^i,\, (a^*)^ja^j]=0\qquad \text{for } 1\le i,j\le 3\, .$$
We can continue in this way to prove the results. We also obtain
\begin{equation}
\label{moreorthog}
(1-a^i(a^*)^i)(1-(a^*)^ja^j) = (1-(a^*)^ja^j)(1-a^i(a^*)^i) = 0
\end{equation}
for $1\le i,j\le n$.

(3) Part (b) is clear.

It follows from \eqref{moreorthog} that
$$(a^j(a^*)^j)((a^*)^ka^k)=a^j(a^*)^j+(a^*)^ka^k-1=((a^*)^ka^k)(a^j(a^*)^j)$$
for $1\le j,k\le n$. Using this, we get that
$$f_jg_k=0=g_kf_j$$
for $0\le j,k\le n-1$. Part (a) follows. Since each $f_j$ is orthogonal to $g_0= 1-a^*a$ and each $g_j$ is orthogonal to $f_0= 1-aa^*$, we obtain
$$a^*f_{j+1}a = a^*af_ja^*a= f_j \qquad\quad \text{and} \qquad\quad ag_{j+1}a^*= aa^*g_jaa^*= g_j$$
for $0\le j\le n-2$, verifying (c).

Now for $1\le j\le n-1$, we have $a^*(f_ja) = f_{j-1}$ and $(f_ja)a^*= f_j$, whence $f_{j-1}\sim f_j$. Similarly, $a(g_ja^*)= g_{j-1}$ and $(g_ja^*)a= g_j$, so $g_{j-1} \sim g_j$. This establishes (d).
\end{proof}

\begin{theorem}
\label{thm:realizing} The algebra $R$ constructed above is regular, and $V (R)\cong \calM$.
\end{theorem}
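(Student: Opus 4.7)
My plan is to verify the hypotheses of Proposition \ref{prop:generaliso} for $R$. Regularity of $R$ will follow from a separate extension argument, whose key ingredient is the identification of the quotient $\bar R := R/\soc(T)$ with the regular pullback algebra $Q$ of Construction \ref{constrQ}.

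The first step is to show $\bar R \cong Q$. Let $\bar M_1, \bar M_2$ denote the images in $\bar R$ of the ideals $M_1, M_2$ from Proposition \ref{structure}. In $\bar R / \bar M_1$ we have $\bar w \bar w^* = 1$ while $\Sigma$ remains invertible, so the universal property of $Q_1$ identifies $\bar R / \bar M_1$ with $Q_1$; symmetrically $\bar R / \bar M_2 \cong Q_1^{\text{opp}}$. Proposition \ref{structure} gives $\bar M_1 \cap \bar M_2 = 0$, so $\bar R$ embeds into $Q_1 \times Q_1^{\text{opp}}$, and the image lies in the pullback $Q$ over $F(x)$. Surjectivity onto $Q$ follows by a standard pullback argument: the image of $\bar M_2$ in $Q_1$ is a nonzero proper ideal, hence equal to the unique such ideal $I_1 = \soc(Q_1)$, which provides enough flexibility to correct any candidate preimage. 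Since $Q$ is regular (Proposition \ref{prop:realizingMbar}) and $\soc(T) = \bigoplus_n M_n(F)$ is a regular non-unital ideal of $R$, the standard extension result \cite[Lemma 1.3]{vnrr} then gives regularity of $R$.

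To apply Proposition \ref{prop:generaliso}, I would then verify that $R$ is unital exchange (immediate from regularity), that $\soc(R) = \soc(T) \cong \bigoplus_n M_n(F)$, and that $\soc(R)$ is essential in $R$. The containment $\soc(T) \subseteq \soc(R)$ is clear, since for a primitive $e \in M_n(F)$ the right $R$-module $eR$ coincides with $eM_n(F)$ and is therefore simple. For the reverse inclusion, any primitive $e \in R$ with $eRe$ a field must lie in $\soc(T)$: otherwise $e \soc(T) e$ would be a proper ideal of the field $eRe$ (proper because $e \notin \soc(T)$), but nonzero because some component $e_n$ is nonzero and then $eM_n(F)e = e_n M_n(F) e_n \ne 0$. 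Essentiality of $\soc(T)$ is immediate: any nonzero $r = (r_n) \in R$ has some nonzero $r_n$, and multiplication on the right by a suitable matrix unit in $M_n(F) \subseteq \soc(T)$ produces a nonzero element of $\soc(T) \cap rR$.

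Finally, I would build $\tau \colon \calM \to V(R)$ using Corollary \ref{higherwnwn*} and Lemma \ref{lem:standdecom}, which guarantee that the elements $\bar w^n(\bar w^*)^n$, $(\bar w^*)^n \bar w^n$, $\bar w^n(1-\bar w\bar w^*)(\bar w^*)^n$, and $(\bar w^*)^n(1-\bar w^*\bar w)\bar w^n$ are idempotents in $\bar R$. Each lifts to an idempotent in $R$ since idempotents lift modulo the regular ideal $\soc(T)$. Setting $\tau(x_n), \tau(y_n), \tau(z_n)$ to be the classes of such lifts and $\tau(a_n)$ to be the class of a primitive idempotent in $M_n(F) \subseteq \soc(T)$ chosen so that the relations $y_n = y_{n+1}+a_{n+1}$ and $z_n = z_{n+1}+a_{n+1}$ hold, the defining relations \eqref{MECrelns} of $\calM$ are satisfied in $V(R)$. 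The restriction of $\tau$ to $\ped(\calM) \cong (\Z^+)^{(\Z^+)}$ is the obvious bijection onto $V(\soc(R)) = V(\bigoplus_n M_n(F)) \cong (\Z^+)^{(\Z^+)}$, and the induced map $\Mbar \to V(\bar R) \cong V(Q)$ agrees under $\bar R \cong Q$ with the isomorphism $\psi$ of Proposition \ref{prop:realizingMbar}. Proposition \ref{prop:generaliso} then yields $\tau \colon \calM \to V(R)$ as an isomorphism. The main obstacle I anticipate is the identification $\bar R \cong Q$ in the first step, particularly the surjectivity onto the pullback; the remaining steps are careful bookkeeping using machinery already developed.
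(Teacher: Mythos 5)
The first half of your argument (identifying $R/\soc(T)$ with the pullback $Q$ via Proposition \ref{structure} and deducing regularity of $R$ from \cite[Lemma 1.3]{vnrr}) is essentially the paper's argument, and the verification of the hypotheses of Proposition \ref{prop:generaliso} concerning $\soc(R)$ and its essentiality is fine.

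The genuine gap is in your construction of $\tau$. You take \emph{some} idempotent lifts of $\bar w^{n}(\bar w^*)^{n}$, $\bar w^{n}(1-\bar w\bar w^*)(\bar w^*)^{n}$, etc., and assert that the relations \eqref{MECrelns} then hold in $V(R)$. They do not, for arbitrary lifts: the whole point of the skew construction is that relations such as $w^n(w^*)^n = w^{n+1}(w^*)^{n+1} + w^n(1-ww^*)(w^*)^n$ hold only \emph{modulo} $\soc(T)$, so with generic lifts you only get $\tau(x_l)+s = \tau(x_{l+1})+\tau(y_{l+1})+t$ with $s,t\in V(\soc(R))$ and no reason for $s=t$; since pedestal elements cancel but cannot be discarded, the relation $x_l = x_{l+1}+y_{l+1}$ (which involves no $a_n$'s and hence cannot be repaired by your choice of the $\tau(a_n)$) will genuinely fail, and $\tau$ will not be well defined on $\calM$. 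The technical core of the paper's proof is precisely the correction of the naive lifts: one chooses integers $K_1<K_2<\cdots$ so that the needed orthogonality relations of Corollary \ref{higherwnwn*} hold beyond the $K_n$-th component, truncates the idempotents by $G_{K_n}=1-H_{K_n}$, and then adds explicit socle idempotents ($H_{K_n}g_n$ with $\rank (g_n)_t = t-n-1$ for $n+1\le t\le K_n$, and $p_{n+1}+\cdots+p_{K_n}$) whose componentwise ranks are computed so that each defining relation of $\calM$ holds \emph{exactly} in $V(R)$, using that idempotents in $\bigoplus_t M_t(F)$ of equal componentwise rank are equivalent. Without this rank bookkeeping (roughly half of the paper's proof), the map $\tau$ does not exist, so this step cannot be dismissed as routine.
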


\begin{proof}
We begin by applying Construction \ref{constrQ} with $K=F$.
Let $Q_1$ be the universal localization $\Sigma^{-1}F\langle x,y\mid
xy=1\rangle$, where $\Sigma$ (as above) is the set $\{ f\in F[x] \mid f(0)=1\}$. Then $Q_1$ is regular, and it has a unique proper nonzero ideal
$I_1$, such that $Q_1/I_1\cong F(x)$. Moreover, $I_1 =\soc(Q_1)$, this ideal is homogeneous (as a right or left semisimple $Q_1$-module), and
$Q_1\cong Q_1\oplus Q_1e$ for any idempotent $e\in I_1$.

Let $Q$ be the algebra in the pullback
$$\xymatrixrowsep{3pc} \xymatrixcolsep{6pc}
\xymatrix{
Q \ar[r]^{\pi_1} \ar[d]_{\pi_2}  &Q_1 \ar[d]^{\pi'_1}  \\
Q_1^{\text{opp}} \ar[r]^{\pi'_2}  &F(x) }$$
of Construction \ref{constrQ}.
By Proposition \ref{prop:realizingMbar}, $Q$ is regular, and $\Mbar \cong V(Q)$
through a specified isomorphism $\psi $.

We claim that $R/\soc (R)\cong Q$. We first note that $ww^*=1$ but
$w^*w\ne 1$ in $R/M_1$. Moreover, all elements $f(w)$, with $f\in
\Sigma$, are invertible in $R/M_1$. It follows that there is a
unique homomorphism $\gamma_1 : Q_1\to R/M_1$ sending $x$ to $w$ and $y$ to
$w^*$. Note that this map is surjective, because $R/M_1$ is
generated by $1$, $w$, $w^*$, and $f^{-1}$ for $f\in \Sigma(w)$. Since $1\ne w^*w$ in $R/M_1$,
the map is also injective. Similarly, there is an isomorphism
$\gamma_2: Q_1^{\text{opp}}\to R/M_2$ sending  $x$ to $w$ and $y$ to $w^*$.
Note that the compositions of the $\gamma_i^{-1}$ with the quotient maps $R \rightarrow R/M_i$ induce a homomorphism
$\gamma\colon R\to Q$ by the pullback property of $Q$. The kernel of
$\gamma$ is precisely $\soc (R)$ by Proposition \ref{structure}. Since $M_1+M_2$ is a maximal ideal of $R$, we see that $\ker \pi'_1\pi_1\gamma = M_1+M_2$, from which it follows easily that the map $\gamma$ is surjective. Thus, $\gamma$ is
an isomorphism from $R/\soc (R)$ onto $Q$. This shows that $R$ is regular, since $\soc(R)= \soc(T)$ is regular.

Finally we show that $\calM\cong V(R)$.

For $n\in\N$, let $h_n: T \rightarrow M_n(F)$ be the canonical projection, and set $H_n:= h_1+\cdots+ h_n$ and $G_n:= 1-H_n$. Because of Corollary \ref{higherwnwn*}, there exist integers $2\le K_1< K_2< \cdots$ such that
$$G_{K_n}(1-w^i(w^*)^i)(1-(w^*)^iw^i) = G_{K_n}(1-(w^*)^iw^i)(1-w^i(w^*)^i) =0$$
for $1\le i\le n+1$. It
then follows from Lemma \ref{lem:standdecom} that
\begin{gather*}
G_{K_n}(1-ww^*),\ G_{K_n}(ww^*- w^2(w^*)^2),\ \dots,\ G_{K_n}(w^n(w^*)^n-w^{n+1}(w^*)^{n+1})  \\
G_{K_n}(1-w^*w),\ G_{K_n}(w^*w- (w^*)^2w^2),\ \dots,\ G_{K_n}((w^*)^nw^n-(w^*)^{n+1}w^{n+1})
\end{gather*}
are lists of orthogonal, equivalent idempotents. In particular,
\begin{align*}
\rank\,  (w^i(w^*)^i- w^{i+1}(w^*)^{i+1})_t &= \rank\,  ((w^*)^iw^i- (w^*)^{i+1}w^{i+1} )_t = 1  \\
\rank\,  (1-w^{n+1}(w^*)^{n+1})_t &= \rank\,  (1-(w^*)^{n+1}w^{n+1})_t = n+1  \\
\rank\,  (w^{n+1}(w^*)^{n+1})_t &= \rank\,  ((w^*)^{n+1}w^{n+1})_t = t-n-1
\end{align*}
for $0\le i\le n$ and $t>K_n$.

We shall define a map $\tau: \calM\to V(R)$, starting by assigning values to the generators $a_n$, $x_n$, $y_n$, $z_n$ of $\calM$. First, set $\tau(a_n)= [p_n]$ where $p_n:= e_{11}\in M_n(F)$, while
\begin{align*}
\tau(x_0) &= [ww^*]= [w^*w]  &\tau(y_0) &= [1-ww^*]  &\tau(z_0) &= [1-w^*w].
\end{align*}
For $n>0$, set
$$\tau(x_n)= [H_{K_n}g_n+ G_{K_n}w^{n+1}(w^*)^{n+1}]= [H_{K_n}g_n+ G_{K_n}(w^*)^{n+1}w^{n+1}],$$
where $g_n$ is an idempotent in $\soc(T)$ such that $(g_n)_t \perp p_t$ for all $t$ and
$$\rank\, (g_n)_t= \begin{cases} t-n-1  &(n+1\le t\le K_n)\\  0  &(\text{otherwise}), \end{cases}$$
and set
\begin{align*}
\tau(y_n) &= [p_{n+1}+ \cdots+ p_{K_n}+ G_{K_n}(w^n(w^*)^n-w^{n+1}(w^*)^{n+1})]  \\
\tau(z_n) &= [p_{n+1}+ \cdots+ p_{K_n}+ G_{K_n}
((w^*)^nw^n-(w^*)^{n+1}w^{n+1})].
\end{align*}

The next step is to show that the elements $\tau(a_n)$, $\tau(x_n)$, $\tau(y_n)$, $\tau(z_n)$ in $V(R)$ satisfy the defining relations of $\calM$. Clearly $\tau(x_0)+\tau(y_0) = [1_R]= \tau(x_0)+ \tau(z_0)$. Since
$$\rank\bigl( p_t+ (g_1)_t \bigr) = t-1= \rank\bigl( (ww^*)_t \bigr)$$
for $2\le t\le K_1$, we see that
$$\tau(x_1)+\tau(y_1)= [p_2+\cdots+ p_{K_1}+ H_{K_1}g_1]+ [G_{K_1}ww^*] = [ww^*]= \tau(x_0).$$
For $n>1$, we have
$$\rank\bigl( p_t+ (g_n )_t \bigr) = t-n= \begin{cases} \rank\bigl( (g_{n-1})_t \bigr)  &(n+1\le t\le K_{n-1})\\  \rank\bigl( (w^n(w^*)^n)_t \bigr)  &(K_{n-1}<t \le K_n) \,, \end{cases}$$
and so
\begin{align*}
\tau(x_n)+\tau(y_n) &= [p_{n+1}+\cdots+ p_{K_n}+ H_{K_n}g_n]+ [G_{K_n}w^n(w^*)^n]  \\
 &= [H_{K_{n-1}}g_{n-1}]+ [G_{K_{n-1}}w^n(w^*)^n]= \tau(x_{n-1}).
 \end{align*}
Similarly, $\tau(x_n)+\tau(z_n)= \tau(x_{n-1})$ for all $n>0$.

We also have
\begin{align*}
\tau(y_1)+\tau(a_1) &= [p_1+ \cdots+ p_{K_1}]+ [G_{K_1}(ww^*-w^2(w^*)^2)]  \\
 &= [p_1+ \cdots+ p_{K_1}]+ [G_{K_1}(1-ww^*)]=  [1-ww^*] = \tau(y_0)
 \end{align*}
and
\begin{align*}
\tau(y_n)+\tau(a_n) &= [p_n+ \cdots+ p_{K_n}]+ [G_{K_n}(w^n(w^*)^n-w^{n+1}(w^*)^{n+1})]  \\
 &=  [p_n+ \cdots+ p_{K_n}]+ [G_{K_n}(w^{n-1}(w^*)^{n-1}-w^n(w^*)^n)]  \\
 &=  [p_n+ \cdots+ p_{K_{n-1}}]+ [G_{K_{n-1}}(w^{n-1}(w^*)^{n-1}-w^n(w^*)^n)] = \tau(y_{n-1})
 \end{align*}
for $n>1$. Similarly, $\tau(z_n)+\tau(a_n)= \tau(z_{n-1})$ for all $n>0$.

Therefore the defining relations \eqref{MECrelns} of $\calM$ are satisfied by $\tau(a_n)$, $\tau(x_n)$, $\tau(y_n)$, $\tau(z_n)$, and consequently these
assignments extend to a well-defined homomorphism $\tau \colon \calM\to V(R)$. Recall from \cite[Lemma 4.8]{TW} that $\ped(\calM) = \bigoplus_{n=1}^\infty \Z^+ a_n$. Clearly, the restriction of $\tau $ to $\ped (\calM)$ induces an isomorphism from $\ped (\calM)$ onto $V(\soc (R))$,
and the induced map $\Mbar =\calM/\ped (\calM)\to V(R/\soc (R)) \cong V(Q)$ is the isomorphism $\psi $ described in Proposition \ref{prop:realizingMbar}.
Moreover $\tau (u)=[1_R]$. We can therefore apply Proposition \ref{prop:generaliso} to conclude that $\tau $ is an isomorphism.
\end{proof}

\section{Relationship with Atiyah's Problem}
\label{sect:Atiyah}

Let $\Gamma$ be a discrete countable group, and denote by $\mathcal N (\Gamma )$ the von Neumann algebra of $\Gamma $ and by $\mathcal U (\Gamma )$
the regular ring of $\Gamma $, which is the classical ring of quotients of $\mathcal N (\Gamma )$. We refer the reader to \cite{luck} for information about
these algebras and background on the Atiyah Conjecture.

We recall the definition of a rank ring. Although the main interest is in regular rank rings, it is convenient here to define the notion for arbitrary
(unital) rings.

\begin{definition}
 \label{def:rankring} A \emph{rank function} on a unital ring $R$ is a function $N \colon R\to [0,1]$ satisfying the following properties:
 \begin{enumerate}
  \item $N(a) = 0$ if and only if $a= 0$, and $N(1)=1$.
  \item $N (a+b)\le N (a) + N (b)$ for all $a,b\in R$.
  \item $N (ab)\le N (a), N (b)$ for all $a,b\in R$.
  \item If $e,f \in R$ are orthogonal idempotents, then $N(e+f)=N (e) +N (f) $.
 \end{enumerate}
A \emph{regular rank ring} is a pair $(R,N )$, where $R$ is a regular ring and $N$ is a rank function on $R$. We will occasionally omit the reference to
$N$.
\end{definition}

We refer the reader to \cite[Chapter 16]{vnrr} for further information about regular rank rings. If $(R,N)$ is a regular rank ring, then $N$ extends to an
unnormalized rank function on $M_n(R)$ for all $n\ge 1$, also denoted by $N$, such that $N(I_n)= n$, where $I_n$ is the identity of $M_n(R)$.

A \emph{$*$-regular ring} is a regular ring endowed with a proper involution, i.e. $x^*x=0$ implies $x= 0$ for $x\in R$.
The $*$-transpose involution on $M_n(R)$ is proper if and only if $*$ is $n$-positive definite, i.e., $\sum _{i=1}^n x_i^*x_i =0$
implies $x_i =0$ for all $i=1,\dots ,n$. The regular ring $\mathcal U (\Gamma)$ is a $*$-regular ring, with a positive definite involution,
see \cite[Chapter 8]{luck}. If $R$ is a $*$-regular ring, then for each element $a$ in $R$ there is a unique $b\in eRf$,
termed \emph{the relative inverse of $a$}, such that $ab=f$ and $ba=e$,
where $f:=LP(a)$ and $e:=RP(a)$ are the left and right projections of $a$, that is, the unique projections in $R$ such that $aR=LP(a)R$
and $Ra=R\cdot RP(a)$, see \cite[Proposition 51.4(i)]{Berb}. All projections of $\mathcal U (\Gamma)$ are contained in $\mathcal N (\Gamma)$.

Moreover, $\mathcal U (\Gamma)$ is a regular rank ring, with a canonical rank function $\rk $ defined by $\rk (a) := \rm{tr}(LP(a))=
\rm{tr}(RP(a))$, where $\rm{tr}$ is the canonical trace on $\mathcal N (\Gamma)$.

We now show the existence of the $*$-regular closure of a subset of a $*$-regular ring, see \cite{elek},
\cite{LS}. Our proof is slightly different from that of
\cite[Proposition 3.1]{LS}. In particular, we observe that there is no need of any additional
hypothesis on the $*$-regular ring to define this closure. Moreover, the $*$-regular closure
has a description which is similar to that of the division closure of a subring of a ring.

\begin{proposition}
\label{prop:*regularclosure}
Let $R$ be a $*$-regular ring and let $S$ be a $*$-subring of $R$. Then there is a smallest $*$-regular subring
$\mathcal R (S, R)$ of $R$ containing $S$. Moreover, there is an increasing sequence
$$S= \mathcal R _0 (S,R)\subseteq \mathcal R _1 (S,R)\subseteq \mathcal R_2 (S, R)\subseteq \cdots $$
of $*$-subrings of $R$ such that $\mathcal R_{i+1}(S,R)$ is generated by $\mathcal R_i (S,R)$ and the relative inverses in
$R$ of the elements of $\mathcal R _i (S,R)$, and $\mathcal R (S, R)= \bigcup _{i=0}^{\infty} \mathcal R _i (S, R)$.
\end{proposition}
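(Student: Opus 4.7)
My plan is to construct the chain $(\mathcal{R}_i(S,R))_{i \ge 0}$ explicitly, define $\mathcal{R}(S,R)$ to be its union, and verify that this union is $*$-regular and is contained in every $*$-regular subring of $R$ containing $S$.

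First, I would set $\mathcal{R}_0(S,R) := S$ and inductively define $\mathcal{R}_{i+1}(S,R)$ as the subring of $R$ generated by $\mathcal{R}_i(S,R)$ together with the relative inverses in $R$ of all its elements (these exist and are unique since $R$ is $*$-regular). Each $\mathcal{R}_i(S,R)$ is automatically $*$-closed: granted this for $\mathcal{R}_i$, one checks directly that if $b \in eRf$ is the relative inverse of an element $a \in R$ (with $e = RP(a)$ and $f = LP(a)$), then $b^* \in fRe$ satisfies $a^*b^* = (ba)^* = e$ and $b^*a^* = (ab)^* = f$, while also $LP(a^*) = e$ and $RP(a^*) = f$; by uniqueness, $b^*$ is the relative inverse of $a^*$. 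Thus the set of adjoined relative inverses is $*$-closed, and so is the subring it generates.

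Setting $\mathcal{R}(S,R) := \bigcup_i \mathcal{R}_i(S,R)$ produces a $*$-subring of $R$ containing $S$. For regularity, given $a \in \mathcal{R}(S,R)$ pick $i$ with $a \in \mathcal{R}_i$; its relative inverse $b$ lies in $\mathcal{R}_{i+1} \subseteq \mathcal{R}(S,R)$, and from $Ra = Re$ one reads off $ae = a$, so $aba = a(ba) = ae = a$. The involution on $R$ restricts to a proper involution on $\mathcal{R}(S,R)$, making $\mathcal{R}(S,R)$ a $*$-regular ring.

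The main technical step is minimality. Let $T'$ be any $*$-regular subring of $R$ containing $S$; I would prove $\mathcal{R}_i(S,R) \subseteq T'$ by induction on $i$. Given the inductive hypothesis, for $a \in \mathcal{R}_i(S,R) \subseteq T'$ the key point is to show that the relative inverse $b$ of $a$ in $R$ coincides with its relative inverse $b'$ in $T'$, which reduces to identifying the left and right projections of $a$ computed in $R$ and in $T'$. Writing $f$ for the left projection of $a$ in $R$ and $f'$ for the left projection of $a$ in $T'$, the relations $aR = fR$ and $aT' = f'T'$ yield $f'a = a = fa$ and $ff' = f'$ (since $f' \in aT' \subseteq aR = fR$), while writing $f = ar$ for some $r \in R$ gives $f'f = f'(ar) = (f'a)r = ar = f$; taking adjoints of $ff' = f'$ and using self-adjointness of $f$ and $f'$ yields $f'f = f'$, whence $f = f'$. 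The same argument identifies the right projections, and then $b'$ lies in $eRf$ and satisfies the defining equations of the relative inverse of $a$ in $R$, forcing $b' = b$ by uniqueness in $R$. Hence $b \in T'$, completing the induction and yielding $\mathcal{R}(S,R) \subseteq T'$.
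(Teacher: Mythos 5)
Your proof is correct and follows essentially the same route as the paper's: both rest on the uniqueness of the relative inverse in a $*$-regular ring to transfer between a $*$-regular subring and the ambient ring. The only organizational difference is that the paper first shows intersections of $*$-regular subrings are $*$-regular (giving existence of $\mathcal R(S,R)$ abstractly) and then asserts the union of the chain equals it, whereas you prove minimality of the union directly; your explicit identification of $LP_{T'}(a)$ with $LP_R(a)$ via $ff'=f'$ and $f'f=f$ spells out the step the paper compresses into ``by uniqueness.''
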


\begin{proof}
 We first show that any intersection $T:=\bigcap_{i\in I} R_i$ of $*$-regular subrings $R_i$ of $R$ is a $*$-regular subring.
 If $a\in T$ then there is a unique $b\in eRf$ such that $ab=f$ and $ba= e$, where $e=RP(a)$ and $f=LP(a)$. By uniqueness we have $b\in  R_i$
 for all $i\in I$ so $b\in \bigcap _{i\in I}R_i=T$. Thus $aba=a$ and $bab=b$, showing that $T$ is regular. Since $T$ is a $*$-subring of $R$, $R$ is
 $*$-regular.

 Clearly, there is a smallest $*$-regular subring of $R$ containing $S$, the intersection of all the $*$-regular subrings of $R$ containing $S$.

 For the second part, set $\mathcal R _0(S,R)=S$ and define $\mathcal R _{i+1}(S,R)$ inductively, by letting $\mathcal R _{i+1}(S,R)$
 be the $*$-subring of $R$ generated by $\mathcal R_i (S,R)$ and the relative inverses of the elements of $\mathcal R_i (S,R)$ in $R$.
 It is clear that $\bigcup _{i=0}^{\infty} \mathcal R _i (S, R)$ is the smallest $*$-regular subring of $R$ containing $S$.
 \end{proof}

If $X$ is a subset of a $*$-regular ring $R$, then the \emph{$*$-regular closure} $\mathcal R (X, R)$ of $X$ in $R$ is defined as the smallest
$*$-regular ring of $R$ containing the $*$-subring of $R$ generated by $X$. Note that the $*$-regular closure of a $*$-subring $S$ of $R$
always contains the division closure, which is the smallest subring $T$ of $R$ containing $S$ which is closed under inversion.
(Of course, this is due to the fact that the relative inverse of an invertible element is the inverse of the element.)
Also observe that the $*$-regular closure of a countable ring is also countable. This applies to the $*$-regular rings $\mathcal R (k\Gamma, \mathcal U (\Gamma))$,
where $k$ is a countable subfield of $\mathbb C$ and $\Gamma$ is (as usual) a countable group.

G. Elek has shown in \cite{elek} that the $*$-regular closure $R(\Gamma )$ of $\mathbb C \Gamma  $ in $\mathcal U (\Gamma)$ coincides with the $*$-regular closure of $\mathbb C \Gamma$
taken in some other $*$-regular overrings of $\mathbb C \Gamma $, whenever $\Gamma $ is a finitely generated \emph{amenable} group.

The question of how big is the $*$-regular closure $\mathcal R (k\Gamma, \mathcal U (\Gamma))$ for a subfield $k$ of $\mathbb C$ closed under complex conjugation
is closely related to the Atiyah Problem for $\Gamma$ and $k$.

We note the following important property of regular rank rings. For a proof see \cite{GM}.

\begin{theorem}
 \label{thm:rrrareur} Let $(R, N)$ be a regular rank  algebra over an uncountable field. Then $R$ is unit-regular.
 In particular, $\mathcal R (k\Gamma, \mathcal U (\Gamma))$ is unit-regular for every uncountable subfield $k$ of $\mathbb C$.
\end{theorem}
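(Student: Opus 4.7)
The strategy is to reduce unit-regularity to an equivalence statement for idempotents, pass to the rank completion of $R$ where the equivalence is automatic, and then descend to $R$ via the uncountable-field hypothesis. Given $a\in R$ with pseudo-inverse $b$ (so $aba=a$, $bab=b$), put $e=ba$ and $f=ab$; then $e\sim f$ in $R$, whence $N(e)=N(f)$ and $N(1-e)=N(1-f)$. A routine check shows that unit-regularity of $a$ is equivalent to the existence of $c\in(1-e)R(1-f)$ and $d\in(1-f)R(1-e)$ with $cd=1-e$ and $dc=1-f$: for then $u:=b+c$ is a unit of $R$, with inverse $a+d$, satisfying $aua=aba+aca=a$ (as $ac=ae\cdot(1-e)c=0$). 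Thus the theorem reduces to showing that any two idempotents of $R$ of the same rank are equivalent.

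I would first pass to the rank completion $\overline{R}$ of $R$ in the metric $(x,y)\mapsto N(x-y)$. The ring $\overline{R}$ is a complete regular rank ring, which by Chapter~19 of \cite{vnrr} is unit-regular; in particular cancellation holds in $V(\overline{R})$, and equivalence of idempotents in $\overline{R}$ is detected by rank. So $1-e$ and $1-f$ are equivalent in $\overline{R}$, witnessed by some $\bar c\in(1-e)\overline{R}(1-f)$ and $\bar d\in(1-f)\overline{R}(1-e)$ with $\bar c\bar d=1-e$ and $\bar d\bar c=1-f$.

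The main obstacle is the descent step: producing an actual equivalence in $R$ from one in $\overline{R}$. My plan is to approximate $\bar c$ and $\bar d$ by $c_0\in(1-e)R(1-f)$ and $d_0\in(1-f)R(1-e)$ so that $c_0d_0-(1-e)$ and $d_0c_0-(1-f)$ have arbitrarily small rank, and then use an affine $K$-parametrized family of corrections
\[
 c_\lambda=c_0+\lambda w_1,\qquad d_\lambda=d_0+\lambda w_2,\qquad \lambda\in K,
\]
with $w_1,w_2$ in the appropriate corners. The conditions $c_\lambda d_\lambda=1-e$ and $d_\lambda c_\lambda=1-f$ translate, after suitable matrix analysis inside $\overline{R}$, into the non-vanishing of countably many polynomials in $\lambda$. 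Since $|K|$ is uncountable, one can choose $\lambda$ avoiding every root simultaneously, yielding a true equivalence in $R$. The delicate point is that the perturbations $w_1,w_2$ must be arranged so that these polynomials are not identically zero, which is where the existence of exact $\bar c,\bar d$ in $\overline{R}$ enters as an essential input.

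The second assertion is immediate from the first: $\mathcal R(k\Gamma,\mathcal U(\Gamma))$ is a $k$-subalgebra of $\mathcal U(\Gamma)$, and the restriction to it of the canonical rank $\rk$ on $\mathcal U(\Gamma)$ makes it a regular rank $k$-algebra, so the main theorem applies as soon as $k$ is uncountable.
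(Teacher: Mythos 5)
Your opening reduction is fine: the equivalence $1-e\sim 1-f$ (with $e=ba$, $f=ab$) witnessed by elements of the corners $(1-e)R(1-f)$ and $(1-f)R(1-e)$ does yield a unit $u=b+c$ with $aua=a$, and in the rank completion $\ol{R}$ (a complete regular rank ring, hence unit-regular) one does get $1-e\sim 1-f$ --- though not because ``equivalence is detected by rank'' (false already in $M_2(K)\times M_3(K)$ with a normalized rank), but because $e\sim f$ and $V(\ol{R})$ is cancellative. The fatal problem is the descent step, which is the entire content of the theorem. The conditions $c_\lambda d_\lambda=1-e$ and $d_\lambda c_\lambda=1-f$ are \emph{exact identities}, i.e.\ vanishing conditions on polynomials in $\lambda$; choosing $\lambda$ in an uncountable field so as to avoid the root sets of countably many nonzero polynomials can only guarantee that polynomials do \emph{not} vanish. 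You cannot force a quadratic expression such as $c_0d_0+\lambda(c_0w_2+w_1d_0)+\lambda^2w_1w_2$ to equal $1-e$ by generic choice of $\lambda$; generically it will fail unless the identity holds for all $\lambda$. Nor can one repair this by merely asking that $c_\lambda$ induce an isomorphism $(1-f)R\to(1-e)R$: for an affine family $c_0+\lambda w_1$ the annihilators $\rann(c_{\lambda_i})$ for distinct $\lambda_i$ need not be independent, so there is no countability bound on the bad $\lambda$'s. Some uncountability argument is indeed essential (equivalence does \emph{not} descend from $\ol{R}$ to $R$ in general --- Section 4 of this paper and \cite{CL} give regular rank algebras over countable fields that are not unit-regular), but the mechanism you propose is logically backwards.

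The proof the paper refers to (it cites \cite{GM} and gives none itself) uses uncountability in the correct direction and bypasses the completion entirely. For $a\in R$ and distinct $\lambda_1,\dots,\lambda_n\in K$, the right ideals $\rann(a-\lambda_i)$ are independent, so the sum of their ranks is at most $1$; hence $\{\lambda\in K : \rann(a-\lambda)\ne 0\}$ is countable, and likewise on the left. In a regular ring an element with zero left and right annihilator is a unit, so for all but countably many $\lambda$ the element $a-\lambda$ is a unit. Since $K$ is uncountable, for any $a,c\in R$ one finds $\lambda\in K^{\times}$ with $a-\lambda$ and $c-\lambda^{-1}$ both units; by the main theorem of \cite{GM} this ``unit $1$-stable range'' condition forces $R$ to have stable range one, and a regular ring with stable range one is unit-regular. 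If you want to complete your argument you should replace the descent step by this annihilator-independence count; your final paragraph on $\mathcal R(k\Gamma,\mathcal U(\Gamma))$ is correct as stated.
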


Note that the main example in Section \ref{sect:realizMbyvnrrr} shows that the above result is not true for algebras over countable fields, see also
\cite{CL}.

It is an old open problem, attributed to Handelman,  whether every $*$-regular ring is unit-regular, see \cite[Open Problem 48]{vnrr}.
In view of the above, the following question seems quite pertinent.

\begin{question}
\label{ques:unit-regular} Let $k$ be a countable subfield of $\mathbb C$, closed under complex conjugation, and let $G$ be a countable group.
 Is then the $*$-regular closure of $k \Gamma$ in $\mathcal U(\Gamma)$ a unit-regular ring?
\end{question}

A negative answer would solve Handelman's question in the negative. A positive answer
would be interesting in itself.

We now consider the lamplighter group, which is the wreath product $G:=\Z_2 \wr \Z=\bigl( \bigoplus_{i\in \Z} \Z_2 \bigr)\rtimes \Z$, see e.g. \cite{DS}.
We denote by $t$ the generator corresponding to $\Z$ and by $a_i$ the generator corresponding to the $i$-th copy of $\Z _2$.
We have $t^{-1}a_it = a_{i+1}$, and $a_ia_j=a_ja_i$ for all $i,j$. Let $e_i := \frac{1+a_i}{2}$ and $f_i :=1-e_i$, and set $s= e_0t$. It was shown by
Grigorchuk and Zuk \cite{GZ} that the trace of the spectral projection of $s+s^*$ corresponding to $0$ is $1/3$.
This gave the first counterexample to Atiyah's Conjecture \cite{GLSZ}.
This was generalized by Dicks and Schick \cite{DS} to the groups $\Z _p\wr \Z$. These authors computed the traces of the spectral projections working directly
on $\ell ^2 (\Z _p\wr \Z )$. We will use some of the computations in \cite{DS}.

From now on, $k$ will denote a subfield of $\mathbb C$ closed under conjugation, endowed with the involution given by complex conjugation.

Recall from Section \ref{sect:exchangerealization} the definition of the monogenic free inverse monoid $\mathcal F$.
The next result identifies the $*$-subalgebra of $k G$ generated by $s=e_0t$ with the semigroup algebra  $k [\mathcal F ]$.

\begin{proposition}
 \label{prop:Aembeds-in-U}
 Let $\mathcal F$ be the monogenic free inverse monoid. Then there exists a $*$-algebra embedding of
 $k[\mathcal F]$ into $k G$ which sends the canonical generator $s$ of $\mathcal F $ to $e_0t$.
 \end{proposition}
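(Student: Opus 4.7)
The plan is to construct the $*$-homomorphism first and then verify injectivity by computing images in the group algebra $kG$ and invoking Munn's description of $\mathcal F$.

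For the construction, set $\sigma:=e_0t$, so that $\sigma^*=t^{-1}e_0$ (using $a_0^*=a_0^{-1}=a_0$ and hence $e_0^*=e_0$). A direct calculation gives $\sigma\sigma^*=e_0^2=e_0$ and $\sigma^*\sigma=t^{-1}e_0t=e_1$, from which $\sigma\sigma^*\sigma=\sigma$ and $\sigma^*\sigma\sigma^*=\sigma^*$. A short induction, using the identity $t^{-n}e_jt^n=e_{j+n}$ that flows from $t^{-1}a_it=a_{i+1}$, yields $\sigma^n=e_0e_{-1}\cdots e_{-n+1}t^n$ and $(\sigma^*)^n=t^{-n}e_0e_{-1}\cdots e_{-n+1}$, so that
\[
\sigma^n(\sigma^*)^n=e_0e_{-1}\cdots e_{-n+1},\qquad (\sigma^*)^n\sigma^n=e_1e_2\cdots e_n.
\]
Both are products of pairwise commuting projections $e_k$, so all elements of the family $\{\sigma^i(\sigma^*)^i,(\sigma^*)^j\sigma^j\}$ commute. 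By Lemma~\ref{Fpres}, this yields a unique monoid homomorphism $\mathcal F\to(kG,\cdot)$ sending $s\mapsto\sigma$; extending $k$-linearly gives an algebra homomorphism $\phi\colon k[\mathcal F]\to kG$, and since this intertwines the involutions on generators it is a $*$-homomorphism.

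For injectivity, I will use Munn's parametrization of $\mathcal F$: its elements are in bijection with triples $(\lambda,\rho,\xi)$ satisfying $\lambda,\rho\ge 0$ and $-\lambda\le\xi\le\rho$, with $s\leftrightarrow(0,1,1)$ and $s^*\leftrightarrow(1,0,-1)$. A routine induction using the multiplication rule for Munn triples together with the identity $t^{-n}e_jt^n=e_{j+n}$ shows that
\[
\phi\bigl((\lambda,\rho,\xi)\bigr)=\Bigl(\prod_{j=-\rho+1}^{\lambda}e_j\Bigr)\,t^{\xi}.
\]
Now suppose $\sum_w c_w w\in\ker\phi$. The exponent $\xi_w$ determines the $\Z$-coset of every group element appearing in the expansion of $\phi(w)$, so for each $n\in\Z$ the partial sum over $w$ with $\xi_w=n$ must vanish separately inside $k[\bigoplus_\Z\Z_2]$. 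Expanding $e_{[m,n]}:=\prod_{j=m}^{n}e_j=\tfrac{1}{2^{n-m+1}}\sum_{S\subseteq[m,n]}a^S$ and using that the monomials $a^T$ are linearly independent in $k[\bigoplus_\Z\Z_2]$, one picks a $\subseteq$-maximal interval $[m_{w^*},n_{w^*}]$ among those appearing for the fixed $\xi=n$; the coefficient of $a^{[m_{w^*},n_{w^*}]}$ receives contributions only from $w^*$, forcing $c_{w^*}=0$. Iterating peels off all $c_w$, proving injectivity.

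The main obstacle I anticipate is making the normal-form bookkeeping rigorous — specifically, verifying that Munn's triple presentation really gives the stated image formula under $\phi$ (a formal induction on the length of a word in $s,s^*$, together with an application of the defining relations in Lemma~\ref{Fpres}), and organizing the ``peeling'' argument cleanly. An alternative route that avoids Munn entirely would be to construct, for each $n$, a finite-dimensional representation of $kG$ whose restriction along $\phi$ recovers the $(n{+}1)$-dimensional shift representation of $k[\mathcal F]$ from Lemma~\ref{lem:centralprojhn}; then injectivity of $\phi$ would follow immediately from the embedding $k[\mathcal F]\hookrightarrow\prod M_n(k)$ established in Proposition~\ref{prop:S_1=socA}. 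I expect the direct approach above to be shorter, but the Munn machinery is the delicate point.
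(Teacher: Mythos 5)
Your construction of the $*$-homomorphism is exactly the paper's: compute $\sigma^i(\sigma^*)^i$ and $(\sigma^*)^j\sigma^j$ as products of the commuting projections $e_k$ and invoke Lemma~\ref{Fpres}. The injectivity argument, however, is genuinely different. The paper makes no use of normal forms here: since $\soc(k[\mathcal F])=\bigoplus_n h_n\,k[\mathcal F]$ is an essential ideal (Lemma~\ref{lem:centralprojhn} and Proposition~\ref{prop:S_1=socA}), injectivity reduces to $\varphi(h_n)\ne 0$, hence to $\varphi(q_{-i,j})\ne 0$, which is verified by the trace computation $\rk\bigl(f_{-i}(e_{-i+1}\cdots e_j)f_{j+1}\bigr)=2^{-(i+j+2)}>0$ in $\mathcal N(G)$. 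You instead compute the image of every element of $\mathcal F$ in closed form from the normal form $s^k(s^*)^l s^m$ ($l\ge k\ge 0$, $l\ge m\ge 0$), obtaining $\bigl(\prod_{j=1-k}^{l-k}e_j\bigr)t^{k+m-l}$ — this agrees with your $\bigl(\prod_{j=-\rho+1}^{\lambda}e_j\bigr)t^{\xi}$, and the triple is recovered from the pair (interval, exponent) — and then prove these images are linearly independent in $kG$ by separating the $t$-cosets and peeling off $\subseteq$-maximal intervals via the leading monomial $a^{[m,n]}$ in the expansion of $\prod_{j=m}^n e_j$. Both arguments are correct. Yours is more self-contained: it needs no von Neumann algebra input (only $\tfrac12\in k$ and the Preston--Munn normal form, which the paper also quotes) and it yields the slightly stronger conclusion that $\varphi$ carries $\mathcal F$ onto a $k$-linearly independent subset of $kG$; the paper's argument is shorter because it leverages the structure theory of $k[\mathcal F]$ already established, and the rank computation it performs is reused immediately afterwards for Proposition~\ref{prop:prodhnaembedsinU}. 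The one point you flag as delicate — the induction establishing the image formula — does go through, so there is no gap.
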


\begin{proof} Set $A :=k[\mathcal F ]$.
Denote momentarily by $\widetilde{s}$ the element $e_0t$.
Then note that $\widetilde{s}\widetilde{s}^*= e_0$ and $\widetilde{s}^* \widetilde{s} = t^{-1}e_0 t= e_1$, so $\widetilde{s}$
is a partial isometry. Observe that
\begin{align*}
\widetilde{s}^i( \widetilde{s}^*)^i &= e_{-i+1}e_{-i+2}\cdots e_{-1}e_0  &(\widetilde{s}^*)^j \widetilde{s}^j &= e_1e_2\cdots e_j
\end{align*}
for all $i,j \in \Z^+$.
In particular, all the projections $\widetilde{s}^i( \widetilde{s}^*)^i , (\widetilde{s}^*)^j \widetilde{s}^j $, $i,j \ge 0$, commute with each other,
and so by Lemma \ref{Fpres} there is a $*$-algebra map $\varphi \colon A\to k G$, with $\varphi(s)= \widetilde{s}$, whose image is the $*$-subalgebra generated by $\widetilde{s}$.

Since $\soc (A) = \bigoplus_{n=0}^\infty h_nA \cong \bigoplus_{n=0}^\infty M_{n+1}(k)$ is essential in $A$ (Lemma \ref{lem:centralprojhn} and Proposition \ref{prop:S_1=socA}), to show that $\varphi $
is injective, it suffices to prove that $\varphi(h_n) \ne 0$ for all $n\ge 0$. For this, it is enough to show that $\varphi (q_{-i,j})\ne 0$
for all $i,j\ge 0$, where $q_{-i,j}$ is as in \eqref{q-ij}.
We have
$$\varphi (q_{-i,j}) = f_{-i}(e_{-i+1}\cdots e_{-1}e_0e_1\cdots e_j)f_{j+1} ,$$
and $\rk (f_{-i}(e_{-i+1}\cdots e_{-1}e_0e_1\cdots e_j)f_{j+1} )= 2^{-(i+j+2)}$, so $\varphi (q_{-i,j}) \ne 0$.
This completes the proof.
\end{proof}

From now on, we will denote by $A$ the $*$-subalgebra of $k G$ generated by $s=e_0t$, and we will identify this subalgebra
with $k [ \mathcal F ]$. As in Section \ref{sect:exchangerealization}, set $h_n :=\sum _{i=0}^n  q_{-i,n-i}$ for $n\ge 0$.
The $h_n$ are pairwise orthogonal central projections in $A$ and $\rk (h_n)=(n+1)2^{-(n+2)}$. It follows that $\sum _{n=0}^{\infty} \rk(h_n)= 1$, cf.
the computation in \cite[Lemma 3.6]{DS}, and so $\sum _{n=0}^{\infty} h_n =1$ in the strong topology of $\mathcal N (G)$.
We obtain:

\begin{proposition}
 \label{prop:prodhnaembedsinU} With the above notation, we have that
$\prod _{n=0}^{\infty} h_n A\cong \prod_{n=0}^{\infty} M_{n+1}(k)$
embeds unitally in $\mathcal U (G)$. Therefore
$$\mathcal R (A,  \mathcal U (G))= \mathcal R (A, \prod _{n=0}^{\infty} h_n A).$$
\end{proposition}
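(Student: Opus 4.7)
The plan is to produce a unital $*$-algebra embedding $\Psi \colon \prod_{n=0}^{\infty} h_n A \hookrightarrow \mathcal{U}(G)$ that restricts to the given inclusion $A \hookrightarrow \mathcal{U}(G)$ on the diagonal copy $a \mapsto (h_n a)$, and then to invoke a general argument about $*$-regular closures. The first isomorphism $\prod h_n A \cong \prod M_{n+1}(k)$ is immediate from Lemma \ref{lem:centralprojhn}.

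To construct $\Psi$, I would exploit the fact, noted before the proposition, that the partial sums $H_N := h_0 + h_1 + \cdots + h_N$ converge strongly to $1$ in $\mathcal{N}(G)$. For $\mathbf{x} = (x_n) \in \prod h_n A$, I would define $T_{\mathbf{x}}$ on the dense subspace $\mathcal{D} := \bigcup_{N} H_N L^2(G)$ by the \emph{finite} sum $T_{\mathbf{x}}(H_N \eta) := \sum_{n=0}^{N} x_n h_n \eta$. The analogous operator $T_{\mathbf{x}^*}$, built from $(x_n^*)$, is a densely defined formal adjoint of $T_{\mathbf{x}}$ on $\mathcal{D}$, which makes $T_{\mathbf{x}}$ closable. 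Because $\mathcal{D}$ is invariant under the commutant $\mathcal{N}(G)'$ (the $H_N$ lie in $\mathcal{N}(G)$, hence commute with $\mathcal{N}(G)'$) and each $x_n \in \mathcal{N}(G)$ commutes with $\mathcal{N}(G)'$, the closure $\overline{T}_{\mathbf{x}}$ is affiliated with $\mathcal{N}(G)$ and therefore lies in $\mathcal{U}(G)$. I would then set $\Psi(\mathbf{x}) := \overline{T}_{\mathbf{x}}$ and verify that $\Psi$ is a unital $*$-homomorphism by checking the identities on $\mathcal{D}$: orthogonality $h_m h_n = \delta_{mn} h_n$ together with $x_m \in h_m A = h_m A h_m$ immediately gives multiplicativity, unitality follows from $H_N \uparrow 1$, and injectivity is clear since $\Psi(\mathbf{x}) = 0$ forces $x_m h_m \eta = 0$ for all $\eta$ and hence $x_m = 0$. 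The sequence $(h_n a)_n$ is sent to $\sum h_n a = a$, so $\Psi$ respects the inclusion of $A$.

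Once $\Psi$ is in place, I would observe that $\prod M_{n+1}(k)$ is itself $*$-regular: the conjugate-transpose involution on each $M_{n+1}(k)$ is positive definite because $k \subseteq \mathbb{C}$ is closed under conjugation, and both regularity and positivity of the involution survive arbitrary direct products. Hence $\Psi\bigl(\prod h_n A\bigr)$ is a $*$-regular subring of $\mathcal{U}(G)$ containing $A$, so by minimality $\mathcal{R}(A, \mathcal{U}(G)) \subseteq \Psi\bigl(\prod h_n A\bigr)$. The equality $\mathcal{R}(A, \mathcal{U}(G)) = \mathcal{R}(A, \prod h_n A)$ then follows by a standard two-sided comparison: once we know $\mathcal{R}(A, \mathcal{U}(G)) \subseteq \prod h_n A$, it is a $*$-regular subring of $\prod h_n A$ containing $A$, yielding $\mathcal{R}(A, \prod h_n A) \subseteq \mathcal{R}(A, \mathcal{U}(G))$, while the reverse inclusion is immediate because $\mathcal{R}(A, \prod h_n A)$ is already a $*$-regular subring of $\mathcal{U}(G)$ containing $A$.

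The main obstacle is the affiliation argument inside the construction of $\Psi$: one must check carefully that $\overline{T}_{\mathbf{x}}$ is a bona fide closed, densely defined operator affiliated with the \emph{finite} von Neumann algebra $\mathcal{N}(G)$, rather than merely a closable operator on $L^2(G)$. This relies on the standard description of $\mathcal{U}(G)$ as the ring of such affiliated operators (\cite[Chapter 8]{luck}), but it is the only non-formal ingredient; everything else reduces to algebraic manipulations on the core $\mathcal{D}$ and an abstract minimality argument for $*$-regular closures.
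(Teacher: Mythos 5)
Your proposal is correct, but the embedding is obtained by a genuinely different mechanism than the one the paper uses. The paper's argument is purely ring-theoretic and very short: since $\sum_{n=0}^{\infty} h_n = 1$ strongly, the family $\{h_n\}$ has zero annihilator in $\mathcal U(G)$, and then the \emph{right self-injectivity} of the regular ring $\mathcal U(G)$ is invoked to conclude that $\prod_n h_n A$ (equivalently, $\prod_n h_n\,\mathcal U(G)\,h_n$ restricted to these subalgebras) embeds unitally --- in effect, an element of the product defines a homomorphism $\bigoplus_n h_n\mathcal U(G) \to \mathcal U(G)$ which extends to left multiplication by an element of $\mathcal U(G)$. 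You instead build the embedding analytically, realizing each sequence $(x_n)$ as a closed densely defined operator affiliated with $\mathcal N(G)$ via the core $\bigcup_N H_N L^2(G)$. Both routes are valid; the paper's buys brevity at the cost of citing a nontrivial structural fact about $\mathcal U(G)$ (self-injectivity), while yours is self-contained modulo the standard identification of $\mathcal U(G)$ with the affiliated operators and the standard facts that affiliated operators over a finite von Neumann algebra form a $*$-algebra and admit no proper closed affiliated extensions (needed to promote agreement on the core $\mathcal D$ to equality in $\mathcal U(G)$, e.g.\ for multiplicativity and for checking that $\Psi$ restricts to the identity on $A$). Your two-sided minimality argument for the equality of the two $*$-regular closures is a correct elaboration of the paper's one-line justification, and correctly rests on the uniqueness of relative inverses from Proposition \ref{prop:*regularclosure}.
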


\begin{proof}
 Since $\sum _{n=0}^{\infty} h_n =1$, it follows that the annihilator in $\mathcal U (G)$  of the family $\{ h_n : n\ge 0 \}$
 is $0$. Since $\mathcal U (G)$ is a self-injective ring, it follows that $\prod_{i=0}^{\infty} h_n A$ embeds unitally in $\mathcal U (G)$. The final statement follows because the image of $\prod_{n=0}^{\infty} h_n A$ is a $*$-regular subring of $\mathcal U(G)$.
\end{proof}

Observe that $\mathcal R (A,\mathcal U (G))\subseteq \mathcal R (kG, \mathcal U (G))$. Using Proposition \ref{prop:prodhnaembedsinU}, we are going
to identify some of the subalgebras of $\mathcal R (A, \mathcal U(G))$.  We denote by $\Z[\frac{1}{2}]$ the subring of $\Q$ generated by $\frac{1}{2}$.

\begin{proposition}
 \label{ranksatSigmaA}
 Let $A$ be the algebra defined before and set $\Sigma= \{ f(s)\in k[s]\mid f(0)=1 \}$. Then $\Sigma ^{-1}A$
 is isomorphic to a subalgebra $\mathcal B$ of $\mathcal U (G)$ containing $A$, and $\rk (e)\in \Z[\frac{1}{2} ]$ for all idempotents $e$
 in $M_n(\mathcal B)$, $n\ge 1$.
 \end{proposition}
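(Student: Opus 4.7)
The plan is to combine the injective $k$-algebra homomorphism $\rho \colon \Sigma^{-1}A \hookrightarrow \prod_{n=0}^{\infty} h_n A \cong \prod_{n=0}^{\infty} M_{n+1}(k)$ furnished by Lemma \ref{lem:prodembeddSigmaA}(1) and Proposition \ref{prop:socSigmaA} with the unital embedding $\prod_{n=0}^{\infty} h_n A \hookrightarrow \mathcal{U}(G)$ supplied by Proposition \ref{prop:prodhnaembedsinU}. The composition $\iota \colon \Sigma^{-1}A \hookrightarrow \mathcal{U}(G)$ is then injective, and we set $\mathcal{B} := \iota(\Sigma^{-1}A)$. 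To verify that $A \subseteq \mathcal{B}$, observe that $\iota(a)$ is characterized by $\iota(a) h_n = a h_n$ for all $n$; since $\sum_{n=0}^{\infty} h_n = 1$ in the strong topology of $\mathcal{U}(G)$, this forces $\iota(a) = \sum_n a h_n = a$, so $\iota|_A$ coincides with the original inclusion $A \hookrightarrow kG \hookrightarrow \mathcal{U}(G)$.

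For the rank statement, the plan is to transfer the computation to the monoid $\mathcal{M}$. By Theorem \ref{thm:exchangerealization} there is a monoid isomorphism $\tau \colon \mathcal{M} \to V(\Sigma^{-1}A) = V(\mathcal{B})$ sending the order-unit $u = x_0 + y_0$ to $[1_{\mathcal{B}}]$. The canonical rank function $\rk$ on $\mathcal{U}(G)$ extends to $M_m(\mathcal{U}(G))$ with $\rk(I_m) = m$ and respects equivalence of idempotents, so for any idempotent $e \in M_m(\mathcal{B})$ the value $\rk(e)$ depends only on the class $[e] \in V(\mathcal{B})$ (under the natural identification $V(M_m(\mathcal{B})) \cong V(\mathcal{B})$). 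Composing with $\tau$ thus produces a monoid homomorphism $\psi := \rk \circ \tau \colon \mathcal{M} \to \mathbb{R}$ with $\psi(u) = 1$, and $\rk(e) = \psi(\tau^{-1}([e]))$ for every idempotent $e \in M_m(\mathcal{B})$. Since $\mathcal{M}$ is generated as a commutative monoid by $\{ x_n, y_n, z_n, a_{n+1} \}_{n \ge 0}$, it suffices to show that $\psi$ takes dyadic values on each of these generators.

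This last step is a direct calculation. Using the explicit representatives for $\tau(x_n)$, $\tau(y_n)$, $\tau(z_n)$, and $\tau(a_{n+1})$ recorded in the proof of Theorem \ref{thm:exchangerealization} (namely $s^{n+1}(s^*)^{n+1}$, $s^n(1-ss^*)(s^*)^n$, $s^n(s^*)^n(1-s^*s)$, and $q_{0,n}$), together with the identification $A \cong k[\mathcal{F}]$ with $s = e_0 t$ from Proposition \ref{prop:Aembeds-in-U}, each representative becomes a product of mutually commuting projections of the form $e_i = (1+a_i)/2$ or $f_i = 1 - e_i$. Its trace in $\mathcal{N}(G)$ is therefore a negative power of $2$, exactly as in the computation of $\rk(q_{-i,j})$ at the end of the proof of Proposition \ref{prop:Aembeds-in-U} (compare \cite[Lemma 3.6]{DS}). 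Concretely, we obtain $\psi(x_n) = \psi(y_n) = \psi(z_n) = 2^{-(n+1)}$ and $\psi(a_{n+1}) = 2^{-(n+2)}$, all lying in $\Z[\tfrac{1}{2}]$.

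The main technical point is this final trace calculation on the specific generators of $\mathcal{M}$, which relies on the precise form of the idempotents produced in Theorem \ref{thm:exchangerealization} and on the product structure of $e_i$'s in $\mathcal{N}(G)$; the embedding $\Sigma^{-1}A \hookrightarrow \mathcal{U}(G)$ and the reduction from idempotents in matrix rings to classes in $V(\mathcal{B})$ are essentially bookkeeping from results already established.
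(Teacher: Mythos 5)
Your proposal is correct and follows essentially the same route as the paper: the embedding $\mathcal B\subseteq\mathcal U(G)$ is obtained by composing the injection of Proposition \ref{prop:socSigmaA} with Proposition \ref{prop:prodhnaembedsinU}, and the rank statement is reduced to evaluating the state $\rk\circ\tau$ on the generators of $\calM$ via the representatives from Theorem \ref{thm:exchangerealization}, yielding the same dyadic values $2^{-(n+1)}$ and $2^{-(n+2)}$. The only cosmetic difference is that the paper computes $\sigma\tau(x_n)$ and $\sigma\tau(y_n)$ directly and then deduces the values on $z_n$ and $a_n$ from the relations \eqref{MECrelns}, whereas you compute all four generators directly; both are fine.
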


\begin{proof}
 By Proposition \ref{prop:socSigmaA}, $\Sigma^{-1}A$ is naturally embedded in $\prod _{n=0}^{\infty} h_n A$, and so, by
 Proposition \ref{prop:prodhnaembedsinU}, $\Sigma ^{-1}A$ is naturally isomorphic to a subalgebra $\mathcal B$ of $\mathcal U (G)$.
 Observe that $\mathcal B $ is contained in the division closure of $A$ in $\mathcal U (G)$. Note that the rank function $\rk$ induces a
 state $\sigma$ on $V(\mathcal B)$ by $\sigma([e])= \rk (e)$ for any idempotent matrix over $\mathcal B$. It suffices to show that the image of $\sigma$ is contained in $\Z[\frac{1}{2} ]$.

Identify $\Sigma^{-1}A$ with $\mathcal B$, and let $\tau : \calM \to V(\mathcal B)$ be the isomorphism constructed in the proof of Theorem \ref{thm:exchangerealization}. We apply $\sigma\tau$ to the generators of $\calM$ and compute that
 \begin{align*}
 \sigma\tau(x_n) &= \sigma( [s^{n+1}(s^*)^{n+1}] ) = \rk (e_{-n} e_{-n+1} \cdots e_{-1} e_0) = 2^{-(n+1)}  \\
 \sigma\tau(y_n) &= \sigma( [s^n(1-ss^*)(s^*)^n] ) = \rk( f_{-n} e_{-n+1} \cdots e_{-1} e_0) = 2^{-(n+1)}
 \end{align*}
 for all $n\ge 0$. In view of \eqref{MECrelns}, it follows that $\sigma\tau(z_n) = \sigma\tau(y_n) = 2^{-(n+1)}$ for all $n\ge 0$ and $\sigma\tau(a_n) = \sigma\tau(y_{n-1})- \sigma\tau(y_n) = 2^{-(n+1)}$ for all $n>0$. Therefore the image of $\sigma\tau$, which equals the image of $\sigma$, is contained in $\Z[\frac{1}{2} ]$.
  \end{proof}

  Since $\mathcal B$ is contained in $\mathcal R _1(A, \prod_{n=0}^{\infty} h_nA)$, so is the algebra generated by $\mathcal B$ and $\mathcal B^*$. We are now going to describe this algebra.

  We will need to consider the algebra of rational series in one variable over $k$, endowed with the Hadamard product $\odot$.
   We will denote this algebra by $\mathcal R ^{{\rm o}}$. See \cite{BR} for details. The elements of $\mathcal R ^{{\rm o}}$
   are the formal power series in $k[[x]]$ which arise as the expansions of rational functions of the form $P(x)/Q(x)$,
   where $P(x)$ and $Q(x)$ are polynomials with coefficients in $k$, and $Q(0)=1$. The Hadamard product in $k[[x]]$ is defined by
$$(\sum _{i=0}^{\infty} a_ix^i)\odot (\sum _{i=0}^{\infty} b_ix^i) := \sum _{i=0}^{\infty} (a_ib_i)x^i. $$
Let $k[[x]]^{\rm{o}}$ denote the $k$-algebra $(k[[x]], \odot)$.

We shall consider the vector space embedding of $k[[x]]$ in $\prod_{n=0}^{\infty} h_nA$ given by $f(x)\mapsto f(s)$. We will also consider the map
$$\psi \colon k[[x]]^{{\rm o}}\to Z(\prod_{n=0}^{\infty} h_nA)$$
$$\psi (\sum _{n=0}^{\infty}
a_nx^n)= (h_na_n) .$$
The map $\psi $ is clearly an $*$-isomorphism from the algebra of power series with the Hadamard product onto the center
of $\prod_{n=0}^{\infty} h_nA$.

  \begin{proposition}
   \label{prop:BandB*}
   Let $\mathcal D $ be the $*$-subalgebra of $\prod_{n=0}^{\infty} h_nA$ generated by $\mathcal B + \mathcal B^*$. Then
   the ideal of $\mathcal D$ generated by $1-ss^*$ coincides with the ideal of $\mathcal D$ generated by $1-s^*s$.
Moreover, we have $\mathcal D /\langle 1-ss^* \rangle \cong k(x)$, with $s+ \langle 1-ss^* \rangle$ and $s^*+ \langle 1-ss^* \rangle$ mapping to $x$ and $x^{-1}$, respectively.
    \end{proposition}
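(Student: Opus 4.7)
The plan has two parts. First, I would construct a surjective $*$-algebra homomorphism $\phi\colon \mathcal D \to k(x)$, where $k(x)$ carries the involution $x^* = x^{-1}$ together with complex conjugation on $k$, satisfying $\phi(s) = x$ and $\phi(s^*) = x^{-1}$. Second, I would identify $\ker\phi$ with both $\langle 1-ss^*\rangle_{\mathcal D}$ and $\langle 1-s^*s\rangle_{\mathcal D}$, which simultaneously yields the equality of these two ideals and the isomorphism $\mathcal D/\langle 1-ss^*\rangle \cong k(x)$.

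For the construction of $\phi$, I would start from the canonical surjection $G \twoheadrightarrow G/\bigoplus_{i\in\Z}\Z_2 \cong \Z$ (killing all the $a_i$), which induces a $*$-algebra map $kG \to k[\Z] = k[x,x^{-1}] \hookrightarrow k(x)$. Restricted to $A \subseteq kG$, this sends $s = e_0 t \mapsto x$ and $s^* \mapsto x^{-1}$. Since $f(x)$ and $f(x^{-1})$ are invertible in $k(x)$ for every $f \in \Sigma$, the universal property of localization extends this uniquely to a morphism from the universal localization $\Sigma_*^{-1} A$ (where $\Sigma_* := \{f(s), f(s^*) : f \in \Sigma\}$) into $k(x)$. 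Since $\mathcal D$ is itself a homomorphic image of $\Sigma_*^{-1} A$ by construction, to obtain $\phi$ one checks that this morphism factors through $\mathcal D$; this can be done by exploiting the embedding $\mathcal D \subseteq \prod_n h_n A$ together with the pullback description of $\mathcal D/\soc(A)$ inherited from Construction \ref{constrQ}, verifying that all relations present in $\mathcal D$ are respected under evaluation in $k(x)$.

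Next, since $\phi$ kills $1-ss^*$ and $1-s^*s$, both $\langle 1-ss^*\rangle_{\mathcal D}$ and $\langle 1-s^*s\rangle_{\mathcal D}$ lie in $\ker\phi$. A useful intermediate observation is that $\soc(A) \subseteq \langle 1-ss^*\rangle_{\mathcal D} \cap \langle 1-s^*s\rangle_{\mathcal D}$, which follows from the identities $q_{-i,j} = h_{i+j}\cdot s^i(1-ss^*)(s^*)^i = h_{i+j}\cdot(s^*)^j(1-s^*s)s^j$, easily verified by direct computation in $\prod_n h_n A$: each primitive idempotent generating $\soc(A)$ then lies in both ideals, and together with the central projections $h_n$ these generate $\soc(A) = \bigoplus_n h_n A$ as an ideal of $\mathcal D$ (since each $h_nA\cong M_{n+1}(k)$ is simple with $q_{-i,j}$ a minimal idempotent for $i+j = n$).

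The main obstacle---and the essential content of the proposition---is showing $1-s^*s \in \langle 1-ss^*\rangle_{\mathcal D}$, equivalently that the image $\bar s$ of $s$ in the quotient $R := \mathcal D/\langle 1-ss^*\rangle$ is two-sided invertible. This is precisely where the enlargement from $\mathcal B = \Sigma^{-1}A$ to $\mathcal D$ matters, for the analogous claim fails in $\mathcal B$ (by Lemma \ref{lem:IJ}), and the inverses $f(s^*)^{-1}$ available in $\mathcal D$ provide the crucial extra ingredient. Concretely, in $R$ we have $\overline{ss^*} = 1$, so $s(1-\lambda s^*) = s - \lambda = -\lambda(1-\lambda^{-1}s)$ for each $\lambda \in k^\times$; since $(1-\lambda^{-1}s)$ arises from an element of $\Sigma$ and is thus invertible in $\mathcal D$, the image of $1-\lambda s^*$ is invertible in $R$, and exploiting analogous identities involving the other inverses $f(s^*)^{-1}$ through the pullback description of $\mathcal D/\soc(A)$ will yield that $\bar s$ has a left inverse in $R$. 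Once $\bar s$ is invertible in $R$, the quotient is generated over $k$ by $\bar s$, $\bar s^{-1}$ together with $f(\bar s)^{\pm 1}$ and $f(\bar s^{-1})^{\pm 1}$ for $f\in\Sigma$, whence $R\cong k(x)$. This gives $\ker\phi = \langle 1-ss^*\rangle_{\mathcal D}$, and the symmetric argument (obtained by swapping the roles of $\mathcal B$ and $\mathcal B^*$ via the involution) yields $\ker\phi = \langle 1-s^*s\rangle_{\mathcal D}$, completing the proof.
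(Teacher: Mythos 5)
Your strategy is viable and, once its two vague steps are filled in, it is essentially the paper's proof run in the opposite direction. The paper does not construct a map $\mathcal D \to k(x)$ at the outset. It first proves the equality of the two ideals directly, by exhibiting the explicit identities
$(1-ss^*)(1-s^*)^{-1}(1-s^*s)(1-s)^{-1}(1-ss^*)=1-ss^*$ and $(1-s^*s)(1-s)^{-1}(1-ss^*)(1-s^*)^{-1}(1-s^*s)=1-s^*s$,
verified componentwise in $\prod_n h_nA$; it then observes that the common ideal $\mathcal I$ is proper because every element of $\mathcal I$ has uniformly bounded rank in the components $h_nA\cong M_{n+1}(k)$ while $\operatorname{rank}(h_n)=n+1$; finally it produces a surjection $k(x)\cong (\Sigma\cup\Sigma^*)^{-1}A/\mathcal T \twoheadrightarrow \mathcal D/\mathcal I$, which is an isomorphism because the source is a field and the target is nonzero. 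Your invertibility argument for $\bar s$ in $R=\mathcal D/\langle 1-ss^*\rangle$ is the same identity in disguise, and you can close it without any appeal to ``analogous identities'' or the pullback description: $1-\lambda s^*=(1-\ol{\lambda} s)^*$ is \emph{already} invertible in $\mathcal D$ (its inverse lies in $\mathcal B^*$), so the relation $\bar s(1-\lambda\bar s^*)=-\lambda(1-\lambda^{-1}\bar s)$ with $\lambda=1$ exhibits $\bar s=-(1-\bar s)(1-\bar s^*)^{-1}$ as a product of two units of $R$. Hence $\bar s$ is invertible, its right inverse $\bar s^*$ is its two-sided inverse, and $1-s^*s\in\langle 1-ss^*\rangle$; the involution gives the other containment. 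Your computation of $\soc(A)$ inside both ideals is correct but can be shortened: each $h_nA$ is a simple ideal of $\mathcal D$ meeting $\langle 1-ss^*\rangle$ nontrivially.

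The one genuine soft spot is the existence of $\phi$. Factoring the evaluation $\Sigma_*^{-1}A\to k(x)$ through the surjection $\Sigma_*^{-1}A\twoheadrightarrow\mathcal D$ requires $\ker(\Sigma_*^{-1}A\to\mathcal D)\subseteq\mathcal T$, and since $\Sigma_*^{-1}A/\mathcal T\cong k(x)$ is simple, this containment holds \emph{if and only if} the ideal of $\mathcal D$ generated by $1-ss^*$ and $1-s^*s$ is proper. So the appeal to ``the embedding and the pullback description'' is concealing precisely the properness statement; it is not circular, but it needs an argument, and the rank-boundedness observation above is the cheapest one. Alternatively, you can dispense with $\phi$ entirely: once the two ideals coincide, are proper, and contain $\soc(A)$, the quotient $R$ is a commutative nonzero ring generated by $\bar s^{\pm1}$ together with inverses of $f(\bar s^{\pm1})$ for $f\in\Sigma$, hence a nonzero quotient of $k(x)$ (every nonzero polynomial is, up to a scalar and a power of $x$, an element of $\Sigma$), hence equal to $k(x)$.
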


    \begin{proof}
     Let $\mathcal I$ be the ideal of $\mathcal D $ generated by $1-ss^*$. By looking at the components in $\prod_{n=0}^{\infty} h_nA\cong
     \prod_{n=0}^{\infty} M_{n+1}(k)$, it is straightforward to check that
     \begin{equation}
     \label{eq:equivalence1}
      (1-ss^*) (1-s^*)^{-1}(1-s^*s) (1-s)^{-1} (1-ss^*)= 1-ss^*
     \end{equation}
\begin{equation}
\label{eq:equivalence2}
      (1-s^*s) (1-s)^{-1}(1-ss^*) (1-s^*)^{-1}(1-s^*s) = 1-s^*s .
    \end{equation}
    which proves that $\mathcal I =\langle 1-ss^*\rangle =\langle 1-s^*s \rangle $.

    We now prove that $\mathcal D / \mathcal I \cong k(x)$.
    Note that, if $a\in \mathcal I$, then
$\rank (h_na)<K$ for all $n\ge 0$, for some constant $K$, where $\rank$ is the usual rank of matrices in $h_nA\cong M_{n+1}(k)$.
It follows that $\mathcal I$ is a proper ideal of $\mathcal D$.
There is a surjective algebra homomorphism $\gamma \colon (\Sigma\cup \Sigma^*)^{-1}A\to k(x)$, whose kernel is the ideal $\mathcal T$ of
$(\Sigma\cup \Sigma^*)^{-1}A$ generated by $1-ss^*$ and $1-s^*s$. On the other hand, there is a natural surjective homomorphism
$\tau \colon  (\Sigma\cup \Sigma^*)^{-1}A\to
\mathcal D$ sending $\mathcal T $ onto $\mathcal I$. Therefore we obtain a surjective $k$-algebra homomorphism
$$\ol{\tau}\colon k(x) \cong (\Sigma\cup \Sigma^*)^{-1}A/\mathcal T \longrightarrow \mathcal D/\mathcal I$$
sending $x$ and $x^{-1}$  to the classes of $s$ and $s^*$ in $\mathcal D/\mathcal I$ respectively. Since $k(x)$ is a field,
$\ol{\tau}$ must be an isomorphism.
\end{proof}

Now let $A(x)=\sum _{n=0}^{\infty} a_nx^n$, $B(x) = \sum _{n=0}^{\infty} b_nx^n$ be two formal power series, and write
$\ol{A}(x) =\sum _{n=0}^{\infty} \ol{a}_n x^n$. Then we have
\begin{equation}
 \label{eq:mainrationalbehaviour}
 (1-ss^*)A(s)^*(1-s^*s)B(s)(1-ss^*)= \psi (\ol{A}(x)\odot B(x))(1-ss^*).
\end{equation}
 Similarly
\begin{equation}
 \label{eq:mainrationalbehav2}
 (1-s^*s)A(s)(1-ss^*)B(s)^*(1-s^*s)= \psi (A(x)\odot \ol{B}(x))(1-s^*s).
\end{equation}
Using these formulas, we will find later normal forms for the elements in $\mathcal I =\langle 1-ss^*\rangle $.

Recall from \cite[Theorems 5.3 and 6.1]{BR} that $\mathcal R ^{{\rm o}}$
is closed under the Hadamard product.

  The algebra $\mathcal R ^{{\rm o}}$ is not closed under inversion in $k[[x]]^{{\rm o}}$. As a preparation for the next result, and for latter use,
  we recall the following theorem, a proof of which can be found in \cite[IV.4]{BR}. Here a \emph{quasi-periodic} subset of $\Z^+$ is any subset of the form
$$F\cup \bigcup_{i=1}^r \{k_i+nN\mid n\in \Z^+ \}$$
where $F$ is finite, $N$ is a non-negative integer (the period)
and $k_1,\dots ,k_r\in \{0,1,\dots , N-1 \}$. These sets are precisely the supports of the expansion in base $2$ of rational numbers $q$
with $0\le q\le 1$.

  \begin{theorem}[Skolem-Mahler-Lech]
   \label{thm:SML}
   Let $K$ be a field of characteristic $0$, and let $S=\sum a_nx^n$ be a rational series with coefficients in $K$. Then the set
   $$\{ n\in \Z^+ \mid a_n=0 \}$$
is a quasi-periodic set.
  \end{theorem}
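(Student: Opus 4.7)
The plan is to follow the classical $p$-adic approach of Skolem, Mahler, and Lech. Since $S=P(x)/Q(x)$ with $P,Q\in K[x]$ and $Q(0)\ne 0$, the sequence $(a_n)$ satisfies a linear recurrence whose characteristic polynomial is the reciprocal of $Q$. By partial-fraction decomposition over an algebraic closure of $K$, one obtains an expression
\[
a_n \;=\; \sum_{i=1}^{s} P_i(n)\,\alpha_i^{\,n}
\]
where the $\alpha_i$ are the distinct nonzero roots of the reciprocal of $Q$ and the $P_i$ are polynomials. All the relevant scalars generate a finitely generated subfield $L$ of $\overline{K}$, and since $\mathrm{char}\,K=0$, we may embed $L$ into a finite extension $E$ of $\Q_p$ for a suitable prime $p$.

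Next, I would choose $p$ and the embedding $L\hookrightarrow E$ so that every $\alpha_i$ is a unit in the valuation ring $\mathcal{O}_E$; such a $p$ exists because the factorizations of the $\alpha_i$ involve only finitely many primes of $L$. Since the residue field of $E$ is finite, there exists a common integer $N\ge 1$ with $\alpha_i^{N}\equiv 1\pmod{\mathfrak{m}_E}$ for all $i$, where $\mathfrak{m}_E$ is the maximal ideal of $\mathcal{O}_E$. Partitioning $\Zplus$ into residue classes modulo $N$, it is enough to analyse, for each $r\in\{0,1,\dots,N-1\}$, the set
\[
Z_r \;:=\; \{\, n\in \Zplus : a_{r+nN}=0 \,\}
\]
and show that $Z_r$ is either cofinite in $\Zplus$ or finite.

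Fix such an $r$ and set $\beta_i:=\alpha_i^{N}$, so each $\beta_i\equiv 1\pmod{\mathfrak{m}_E}$. Then $\log\beta_i$ is defined in $E$ and has positive valuation, so the function
\[
f_r(z) \;:=\; \sum_{i=1}^{s} \alpha_i^{\,r}\,P_i(r+Nz)\,\exp\!\bigl(z\log\beta_i\bigr)
\]
is given by a $p$-adic power series that converges for all $z\in \Z_p$: the positive valuation of each $\log\beta_i$ dominates the factorial denominators in the exponential series. Moreover $f_r(n)=a_{r+nN}$ for every $n\in\Zplus$, so $f_r$ is a genuine $p$-adic analytic interpolation of the restricted sequence. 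By Strassmann's theorem (equivalently, $p$-adic Weierstrass preparation), either $f_r$ vanishes identically on $\Z_p$, in which case $Z_r=\Zplus$, or $f_r$ has only finitely many zeros in $\Z_p$, in which case $Z_r$ is finite. Reassembling the $N$ residue classes yields that $\{n:a_n=0\}$ has the shape $F\cup\bigcup_{j=1}^{r}(k_j+N\Zplus)$ with $F$ finite, that is, quasi-periodic.

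The main obstacle is the $p$-adic analytic setup: choosing a prime $p$ at which every $\alpha_i$ is a unit, selecting the common exponent $N$ so that each $\beta_i$ lies safely in the domain of convergence of the $p$-adic logarithm, and verifying convergence of the interpolating series on $\Z_p$ so that Strassmann's theorem is applicable. Once these ingredients are in place, the quasi-periodic structure falls out mechanically. It is worth noting that the hypothesis $\mathrm{char}\,K=0$ is essential here: in positive characteristic the analogous statement fails and the zero set of a rational series can be much more complicated.
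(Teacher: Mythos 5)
The paper does not prove this statement at all: it is quoted as a known theorem with a pointer to Berstel--Reutenauer [BR, IV.4], and the proof given there is precisely the classical $p$-adic argument you outline (Lech's embedding of a finitely generated characteristic-zero field into a finite extension of $\Q_p$ with the $\alpha_i$ units, interpolation along residue classes, Strassmann's theorem). Your proposal is therefore essentially the standard proof and is correct in outline; the one point stated too loosely is the convergence step, since $\beta_i\equiv 1\pmod{\mathfrak m_E}$ only guarantees $v(\log\beta_i)>0$, whereas for $\exp(z\log\beta_i)$ to converge on all of $\Z_p$ one needs $v(\log\beta_i)>1/(p-1)$ (normalized so $v(p)=1$), which is arranged by replacing $N$ with $Np^c$ for suitable $c$ --- a repair you yourself flag as the ``main obstacle,'' so this is an acknowledged technicality rather than a gap.
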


  \begin{lemma}
   \label{lem:closureofmathcalR}
   Let $\mathcal Q$ be the classical ring of quotients of $\mathcal R^{{\rm o}}$. Then $\mathcal Q$ is a commutative $*$-regular ring,
   isomorphic to the $*$-regular closure of $\mathcal R^{{\rm o}}$ in $k[[x]]^{{\rm o}}$.
   \end{lemma}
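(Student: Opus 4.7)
My plan is to realize $\mathcal{Q}$ concretely as a $*$-subring of $k[[x]]^{{\rm o}}$ and then identify it with the $*$-regular closure. First I describe the non-zero-divisors of $\mathcal{R}^{{\rm o}}$: since the monomials $x^n$ lie in $\mathcal{R}^{{\rm o}}$, an element $A=\sum a_nx^n$ is a non-zero-divisor if and only if every $a_n\ne 0$; call this multiplicative set $S$. Each such $A$ is invertible in $k[[x]]^{{\rm o}}$ via the Hadamard inverse $A^{-1}=\sum a_n^{-1}x^n$, so the universal property of the classical ring of quotients yields a homomorphism $\mathcal{Q}\to k[[x]]^{{\rm o}}$ extending the inclusion of $\mathcal{R}^{{\rm o}}$, and this homomorphism is injective ($B\odot A^{-1}=0$ forces $B=0$) and $*$-preserving (coefficient-wise conjugation commutes with Hadamard inversion).

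Next I show that the image $\mathcal{Q}\subseteq k[[x]]^{{\rm o}}$ is $*$-regular. The coefficient-wise conjugation is proper on $k[[x]]^{{\rm o}}$, so the only work is to produce quasi-inverses. Given $B/A\in\mathcal{Q}$, the Skolem--Mahler--Lech theorem (Theorem \ref{thm:SML}) tells us that the zero set of $B$ is quasi-periodic, hence its complement $\mathrm{supp}(B)$ is also quasi-periodic, and therefore the characteristic function $e:=\chi_{\mathrm{supp}(B)}$ is itself an element of $\mathcal{R}^{{\rm o}}$ (a finite sum of shifted geometric series of the form $x^k/(1-x^N)$ together with a polynomial). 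Setting $B':=B+(1-e)/(1-x)\in\mathcal{R}^{{\rm o}}$, we obtain an element with no zero coefficient, so $B'\in S$, and a coefficient-wise check shows that $X:=eA/B'\in\mathcal{Q}$ satisfies $(B/A)\odot X\odot(B/A)=B/A$.

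Finally, to identify $\mathcal{Q}$ with $\mathcal{R}(\mathcal{R}^{{\rm o}},k[[x]]^{{\rm o}})$, one inclusion is immediate from minimality of the closure, since $\mathcal{Q}$ is a $*$-regular subring of $k[[x]]^{{\rm o}}$ containing $\mathcal{R}^{{\rm o}}$. For the other, note that for $A\in S$ the Hadamard inverse $A^{-1}$ is precisely the relative inverse of $A$ in $k[[x]]^{{\rm o}}$ (both of its projections being $1$), so $A^{-1}\in\mathcal{R}(\mathcal{R}^{{\rm o}},k[[x]]^{{\rm o}})$ and thus every $B\odot A^{-1}\in\mathcal{Q}$ lies in the closure. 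The main obstacle is the regularity step: the replacement of $B$ by the non-zero-divisor $B'$ is what turns the formal Hadamard inverse into an honest element of $\mathcal{Q}$, and this replacement depends essentially on Skolem--Mahler--Lech to keep $\chi_{\mathrm{supp}(B)}$ inside $\mathcal{R}^{{\rm o}}$.
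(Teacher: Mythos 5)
Your proposal is correct and follows essentially the same route as the paper: embed $\mathcal Q$ into $k[[x]]^{{\rm o}}$, use the Skolem--Mahler--Lech theorem to see that the characteristic series of the zero set (equivalently, the support) of a rational series is an idempotent of $\mathcal R^{{\rm o}}$, perturb $B$ by that idempotent to get a non-zero-divisor, and read off the quasi-inverse coefficient-wise. The only cosmetic point is the expression $B+(1-e)/(1-x)$, which should be read as $B$ plus the characteristic series of the zero set of $B$ (i.e.\ $(1-x)^{-1}-e$, the Hadamard complement of $e$); with that reading your argument matches the paper's.
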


   \begin{proof}
    Recall that the classical ring of quotients is the ring of quotients obtained by inverting all non-zero-divisors.
    Since $k[[x]]^{{\rm o}}$ is $*$-regular (and we are dealing with a commutative localization) we have that there is
    a natural injective $*$-homomorphism from $\mathcal Q$ into $k[[x]]^{{\rm o}}$. We will identify $\mathcal Q$ with its
    image in $k[[x]]^{{\rm o}}$.

Given a purely periodic set $A=   \bigcup_{i=1}^r \{k_i+nN\mid n\in \Z^+ \}$, with $k_1,\dots ,k_r\in \{0,1,\dots, N-1 \}$, we may consider
the element
$$e= \sum _{i=1}^r (x^{k_i}(1-x^N)^{-1})$$
in $\mathcal R^{{\rm o}}$. Observe that $e$ is idempotent in $\mathcal R^{{\rm o}}$ and that its support is exactly $A$.
If $A$ is just quasi-periodic, we can also obtain $A$ as the support of an element of $\mathcal R^{{\rm o}}$ by adding a polynomial to the above
element $e$.
By the Skolem-Mahler-Lech Theorem (Theorem \ref{thm:SML}), this implies that
the annihilator in $k[[x]]^{{\rm o}}$ of any element of $\mathcal R ^{{\rm o}}$ is generated by a projection in $\mathcal R^{{\rm o}}$.

In order to avoid confusion, we will denote the quasi-inverse of an element $a$ in $k[[x]]^{\rm{o}}$ by $a^\dag$. In particular
the inverse in $k[[x]]^{{\rm o}}$ of an invertible element $b$ is denoted by $b^\dag$.

Let $a\odot b^{\dag}$ be an arbitrary element of $\mathcal Q$, where $a,b\in \mathcal R ^{{\rm o}}$ and $b$ is invertible
in $k[[x]]^{{\rm o}}$.  Let $e$ be the unique idempotent of $\mathcal R^{{\rm o}}$ such that the annihilator of $a$ in $k[[x]]^{{\rm o}}$
is the ideal generated by $e$. Then $a+e\in \mathcal R ^{{\rm o}}$ is invertible in $k[[x]]^{{\rm o}}$, and the quasi-inverse
of $a\odot b^{\dag}$ is $((1-e)\odot b)\odot (a+e)^{\dag}\in \mathcal Q$. This shows that $\mathcal Q$ is $*$-regular.
Therefore $\mathcal Q$ must be the $*$-regular closure of $\mathcal R^{{\rm o}}$ in $k[[x]]^{{\rm o}}$.
\end{proof}

Let $\mathcal E$ be the subalgebra of $\prod_{n=0}^{\infty} h_nA$ generated by $\mathcal D$ and $\psi (\mathcal Q)(1-ss^*)$. We can also say that $\mathcal E$ is generated by
$$\{s,\, s^*\} \cup \{f(s)^{-1},\, (f(s)^*)^{-1} \mid f\in \Sigma \} \cup \{ \psi(q)(1-ss^*) \mid q\in \mathcal Q\}.$$
Note that $\mathcal E $ is a $*$-subalgebra. Our aim is to show that $\mathcal E$ is the $*$-regular closure of $A$. To show this, we need some preparations. Since $A\subseteq \mathcal B \subseteq \mathcal D \subseteq \mathcal E$ and $\soc(A)= \bigoplus_{n=0}^\infty h_nA$ (Proposition \ref{prop:S_1=socA}), we see that $\soc(A)$ is a semisimple essential right and left ideal of $\mathcal E$. It follows that $\soc(A) = \soc(\mathcal E)$.

First, we obtain a formula to compute elements of the form $(f(s)^*)^{-1}$, for $f\in \Sigma$. Recall that we use $\equiv $ to denote that two elements in $\prod_{n=0}^{\infty} h_n A$
are congruent modulo the socle $\bigoplus _{n=0}^{\infty} h_n A$.

\begin{lemma}
 \label{lem:computinginverse}
 Let $f(x)= 1+a_1x+\cdots +a_nx^n\in \Sigma$, and set $f_1(x)= \ol{a}_n+\ol{a}_{n-1}x+\cdots +\ol{a}_1 x^{n-1}+x^n$.
 Then there are polynomials $P_i(x)$, $i=0,\dots ,n-1$ of degree $\le n-1$ such that
 $$(f(s)^*)^{-1}\equiv f_1(s)^{-1}s^n -\sum _{i=0}^{n-1} f_1(s)^{-1} s^i (1-ss^*)P_i(s)^*(f(s)^*)^{-1} .$$
\end{lemma}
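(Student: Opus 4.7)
The plan is to derive the identity at the level of $A$ itself by directly expanding $s^n f(s)^*$ in terms of basis-like elements and then formally inverting $f_1(s)$ and $f(s)^*$.

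First I will write $s^n f(s)^* = \sum_{j=0}^n \bar a_j\, s^n (s^*)^j$ (with $\bar a_0 = 1$) and use the telescoping identity
\begin{equation*}
s^j (s^*)^j \;=\; 1 - \sum_{i=0}^{j-1} s^i (1-ss^*)(s^*)^i,
\end{equation*}
which follows from $s^i(s^*)^i - s^{i+1}(s^*)^{i+1} = s^i(1-ss^*)(s^*)^i$. Multiplying on the left by $s^{n-j}$ gives, for $j \le n$,
\begin{equation*}
s^n(s^*)^j \;=\; s^{n-j} - \sum_{i=0}^{j-1} s^{n-j+i}(1-ss^*)(s^*)^i.
\end{equation*}
Summing over $j$, the ``diagonal'' contributions $\sum_{j=0}^n \bar a_j\, s^{n-j}$ collapse to exactly $f_1(s)$, and the remaining double sum (over $1 \le j \le n$, $0 \le i \le j-1$) will be re-indexed via $m := n-j+i$.

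For each $m \in \{0,\dots,n-1\}$ the inner sum combines into a single term of the form $s^m(1-ss^*)\,P_m(s)^*$, where I can take
\begin{equation*}
P_m(x) \;:=\; \sum_{l=0}^{m} a_{n-m+l}\, x^l,
\end{equation*}
a polynomial of degree at most $m \le n-1$. This will produce the exact identity
\begin{equation*}
f_1(s) \;=\; s^n f(s)^* \,+\, \sum_{m=0}^{n-1} s^m(1-ss^*)\, P_m(s)^*
\end{equation*}
inside $A$. Multiplying on the right by $(f(s)^*)^{-1}$ and on the left by $f_1(s)^{-1}$---both of which exist in $\prod_{n=0}^\infty h_n A$, the first without restriction and the second because, after replacing $n$ by the actual degree of $f$, the leading coefficient $a_n$ is nonzero so $f_1(0) = \bar a_n$ is a unit---I will obtain
\begin{equation*}
(f(s)^*)^{-1} \;=\; f_1(s)^{-1} s^n \,+\, \sum_{m=0}^{n-1} f_1(s)^{-1}\, s^m(1-ss^*)\, P_m(s)^*\,(f(s)^*)^{-1},
\end{equation*}
which matches the statement up to replacing each $P_m$ by $-P_m$ to absorb the sign; since the derivation gives a genuine equality in the ambient algebra, the congruence $\equiv$ modulo $\mathrm{soc}(A)$ demanded by the lemma is automatic.

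The main (and only) real task will be the careful bookkeeping of the double-sum re-indexing $(i,j) \mapsto (m,l)$ with $l = j - (n-m)$, to verify that for each fixed $m$ the inner sum assembles into the polynomial $P_m(s^*) = P_m(s)^*$ of degree at most $n-1$; everything else is formal manipulation using only the inverse-semigroup relations $s s^*s = s$ and $s^*s s^* = s^*$ together with the commutativity of the idempotents $\{s^i(s^*)^i, (s^*)^j s^j\}$ established in Lemma \ref{Fpres}.
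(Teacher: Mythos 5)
Your proposal is correct, and it takes a genuinely different route from the paper. The paper proves the key identity
\begin{equation*}
f_1(s)\;\equiv\; s^nf(s)^* - \sum_{i=0}^{n-1} s^i(1-ss^*)P_i(s)^*
\end{equation*}
only as a congruence modulo $\soc(A)$, by passing to the faithful representation $\rho_0\colon A\hookrightarrow \prod_i M_i(k)$ and explicitly computing the matrix components $\pi_m\rho_0(\,\cdot\,)$ for all $m\ge n$; the polynomials it exhibits are $P_i(x)=-\sum_{j=0}^i a_{n-j}x^{i-j}$, which are exactly the negatives of yours (so the sign discrepancy you flag is immaterial, as the lemma only asserts existence of some $P_i$ of degree $\le n-1$). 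Your derivation instead stays inside $A$: the telescoping identity $1-s^j(s^*)^j=\sum_{i=0}^{j-1}s^i(1-ss^*)(s^*)^i$ is an exact consequence of $s^i(s^*)^i-s^{i+1}(s^*)^{i+1}=s^i(1-ss^*)(s^*)^i$, left multiplication by $s^{n-j}$ preserves exactness, and the reindexing $(i,j)\mapsto(m,l)$ with $m=n-j+i$, $l=i$ does assemble the double sum into $\sum_{m=0}^{n-1}s^m(1-ss^*)P_m(s)^*$ with $P_m(x)=\sum_{l=0}^m a_{n-m+l}x^l$ of degree $m\le n-1$ (I checked the bookkeeping: for fixed $m$ the index $j$ runs from $n-m$ to $n$). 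You therefore obtain a genuine equality in $A$, which is strictly stronger than the paper's congruence, and the final left/right multiplications by $f_1(s)^{-1}$ and $(f(s)^*)^{-1}$ are legitimate in $\prod_{n=0}^\infty h_nA$ since each component of $f(s)^*$ is unipotent and each component of $f_1(s)$ is $\ol{a}_n$ plus a nilpotent. Your caveat that $f_1(s)$ is invertible only when $a_n\ne 0$ (i.e., $n=\deg f$) is a real point, but it is an assumption the lemma's statement already makes implicitly by writing $f_1(s)^{-1}$, and the paper handles it no more explicitly than you do. In short: the paper buys a short verification at the cost of working in coordinates and settling for a congruence; your argument is coordinate-free, yields an exact identity, and requires only the careful reindexing you identify.
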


\begin{proof} We use the polynomials
$$P_i(x) := - \sum_{j=0}^i a_{n-j} x^{i-j}, \qquad i=0,\dots,n-1.$$
To prove the lemma, it suffices to show that
\begin{equation}
\label{f1cong}
f_1(s) \equiv s^nf(s)^* - \sum_{i=0}^{n-1} s^i(1-ss^*) P_i(s)^*.
\end{equation}
Let $\rho_0: A \rightarrow \prod_{i=1}^\infty M_i(k)$ be the $*$-algebra embedding of Proposition \ref{prop:S_1=socA}, and for $m\ge0$ let $\pi_m : \prod_{i=1}^\infty M_i(k) \to M_m(k)$ be the canonical projection. To establish \eqref{f1cong}, we need to show that
\begin{equation}
\label{pim=0}
\pi_m \rho_0 \biggl( f_1(s)- s^nf(s)^* + \sum_{i=0}^{n-1} s^i(1-ss^*) P_i(s)^* \biggr) = 0
\end{equation}
for all but finitely many $m$.

Fix $m\ge n$ for the remainder of the proof. For $j=0,\dots,n-1$, we have
$$\pi_m \rho_0 \bigl( s^j(1-s^{n-j} (s^*)^{n-j}) \bigr) = \biggl(\, \sum_{l=1}^{m+1-j} e_{j+l,l} \biggr) \biggl(\, \sum_{l'=1}^{n-j} e_{l'l'} \biggr) = \sum_{l=1}^{n-j} e_{j+l,l} .$$
Moreover,
$$f_1(s)- s^nf(s)^* = \biggl(\, \sum_{j=0}^{n-1} \ol{a}_{n-j} s^j+ s^n \biggr) - s^n \biggl( 1+ \sum_{j=0}^{n-1} \ol{a}_{n-j} (s^*)^{n-j} \biggr) = \sum_{j=0}^{n-1} \ol{a}_{n-j} s^j ( 1-s^{n-j} (s^*)^{n-j} ),$$
and consequently
\begin{equation}
\label{pim1}
\pi_m \rho_0 \bigl( f_1(s)- s^nf(s)^* \bigr) = \sum_{j=0}^{n-1} \ol{a}_{n-j} \sum_{l=1}^{n-j} e_{j+l,l} = \sum_{i=0}^{n-1} \sum_{j=0}^i \ol{a}_{n-j} e_{i+1,i+1-j} .
\end{equation}
On the other hand,
$$\pi_m \rho_0 \bigl( s^i(1-ss^*) (s^*)^{i-j} \bigr) = \biggl(\, \sum_{l=1}^{m+1-j} e_{i+l,l} \biggr) e_{11} \biggl(\, \sum_{l'=1}^{m+j-i} e_{l',i-j+l'} \biggr) = e_{i+1,i+1-j} $$
for $0\le j\le i<n$, whence
\begin{equation}
\label{pim2}
\pi_m \rho_0 \biggl(\, \sum_{i=0}^{n-1} s^i(1-ss^*) P_i(s)^* \biggr) = - \sum_{i=0}^{n-1} \sum_{j=0}^i \ol{a}_{n-j} e_{i+1,i+1-j} .
\end{equation}
Combining \eqref{pim1} and \eqref{pim2}, we obtain \eqref{pim=0}, as desired.
 \end{proof}

In the next lemma, we obtain a normal form for elements in the ideal of $\mathcal E$ generated by $1-ss^*$.

 \begin{lemma}
 \label{lem:normalformmathcalI}
 Every element in the ideal $\mathfrak I$ of $\mathcal E$ generated by $1-ss^*$ is a sum of terms of the following forms:
 \begin{enumerate}
   \item[(A)] $\psi (q)\Big[ f(s)^{-1}s^i\Big] (1-ss^*)\Big[ (s^*)^j(g(s)^*)^{-1}\Big]$, for $i,j\ge 0$ and $f, g\in \Sigma$, $q\in\mathcal Q$,
   \item[(B)] $\psi (q) \Big[ (f(s)^*)^{-1}(s^*)^i\Big] (1-s^*s) \Big[ s^jg(s)^{-1}\Big]$, for $i,j\ge 0$ and $f, g\in \Sigma$, $q\in \mathcal Q$,
   \item[(C)] $\psi (q) \Big[ (g(s)^*)^{-1}(s^*)^i\Big] (1-s^*s)\Big[ s^jh(s)^{-1}\Big] (1-ss^*)\Big[ (s^*)^k(f(s)^*)^{-1}\Big]$, for $i,j,k\ge 0$ and $g,f, h\in \Sigma$, $q\in \mathcal Q$,
   \item[(D)] $\psi (q) \Big[ f(s)^{-1}s^k\Big] (1-ss^*)\Big[ (h(s)^*)^{-1} (s^*)^j \Big] (1-s^*s) \Big[ s^i g(s)^{-1}\Big] $, for $i,j,k\ge 0$ and $g,f, h\in \Sigma$, $q\in \mathcal Q$,
   \item[(E)] elements from $\soc (A)$.
 \end{enumerate}
 \end{lemma}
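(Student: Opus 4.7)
Proof proposal. Let $\mathcal L$ denote the $k$-linear span in $\mathcal E$ of all elements of the forms (A)--(E). The plan is to show that $\mathcal L$ is a two-sided ideal of $\mathcal E$ containing $1-ss^*$; since $\mathcal L \subseteq \mathfrak I$ by construction, this forces $\mathcal L = \mathfrak I$. Trivially $1 - ss^* \in \mathcal L$ (take $i=j=0$, $f=g=1$, $q=1$ in (A)). Recall that $\mathcal E$ is generated as a $k$-algebra by $s$, $s^*$, the inverses $f(s)^{-1}$ and $(f(s)^*)^{-1}$ for $f \in \Sigma$, and the elements $\psi(q)(1-ss^*)$ for $q \in \mathcal Q$. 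Thus it suffices to verify that $\mathcal L$ is closed under left and right multiplication by each of these generators, modulo the fact that $\soc(A)$ is already absorbed in form (E) and is a two-sided ideal of $\mathcal E$.

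The central elements $\psi(q)$ can be pulled past every factor since they lie in the center of $\prod_{n=0}^\infty h_n A$. Multiplication by $f(s)^{-1}$ is the easiest: forms (A) and (D) already admit $f(s)^{-1}$ at their right end and can absorb a right multiplication by $g(s)^{-1}$ via $f(s)^{-1} g(s)^{-1} = (gf)(s)^{-1}$; right multiplication of (B) or (C) by $g(s)^{-1}$ works similarly since both end with an $s^j h(s)^{-1}$ block. Left multiplications are handled by symmetry, using the involution. For right multiplication of forms (A)--(E) by $s$ or $s^*$, the key computational identities are the congruences \eqref{ss*seq}, \eqref{sis*is}, and the more detailed analysis already carried out in the proof of Lemma \ref{lem:AmodsocA} showing that $(1-ss^*)(s^*)^j s^m$ is, modulo $\soc(A)$, either $(1-ss^*)(s^*)^{j-m}$ (if $m \le j$) or $0$ (otherwise), together with the mirror identities for $s^i(s^*)^j(1-s^*s)$. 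These collapse the $(s^*)^j$ and $s^i$ blocks in (A)--(D) by $\pm 1$ upon multiplication by $s^{\pm 1}$, and produce only terms that remain of the same forms (plus possibly (E)).

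The delicate point is right multiplication by $(g(s)^*)^{-1}$ on forms whose rightmost factor is not already $(f(s)^*)^{-1}$. For instance, right multiplication of form (B) by $(g(s)^*)^{-1}$ gives an element ending in $s^j h(s)^{-1}(g(s)^*)^{-1}$, which is not yet in normal form. Here one invokes Lemma \ref{lem:computinginverse}: it expresses $(g(s)^*)^{-1}$ modulo $\soc(A)$ as $g_1(s)^{-1} s^n$ plus a sum of terms of the form $g_1(s)^{-1} s^i(1-ss^*) P_i(s)^* (g(s)^*)^{-1}$. Substituting this identity inserts a fresh $(1-ss^*)$ factor that completes the product to one of forms (A)--(D), while the first summand contributes terms still of the correct form (the trailing $s^n$ can be commuted through the preceding $(s^*)^{i}$ and $(1-s^*s)$ blocks by the congruences of Lemma \ref{lem:AmodsocA}, at the cost of socle corrections). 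The hardest bookkeeping arises here, when tracking how (B) becomes (C) and (A) becomes (D) under this substitution. By symmetry (taking adjoints of all intermediate steps) one handles left multiplication by $(g(s)^*)^{-1}$ and $g(s)^{-1}$ on the other side.

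Finally, for multiplication by $\psi(q)(1-ss^*)$, the central factor $\psi(q)$ is again absorbed trivially, leaving the task of showing that right multiplication by $(1-ss^*)$ of a form (A)--(D) element still lies in $\mathcal L$. Such a product sandwiches an expression between $(1-ss^*)$ and $(1-ss^*)$ (or $(1-s^*s)$ and $(1-ss^*)$); this is precisely the situation where the Hadamard-product identities \eqref{eq:mainrationalbehaviour} and \eqref{eq:mainrationalbehav2} apply, collapsing the inner expression to a single term $\psi(\ol A \odot B)(1-ss^*)$ with $\ol A \odot B \in \mathcal Q$. Since $\mathcal Q$ is the classical ring of quotients of $\mathcal R^{\mathrm o}$ (Lemma \ref{lem:closureofmathcalR}) and the relevant power series $A(x), B(x)$ arising from $f(s)^{-1}, (g(s)^*)^{-1}, \psi(q)$ all lie in $\mathcal Q$, the resulting element is again of form (A). The analogous collapse handles the other sandwich patterns, producing terms of forms (A)--(D). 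The main obstacle in the whole argument is the $(f(s)^*)^{-1}$ case, as it is there that Lemma \ref{lem:computinginverse} must be combined carefully with the commutation identities of Lemma \ref{lem:AmodsocA} and the Hadamard-product reduction to produce a closed list of admissible summands.
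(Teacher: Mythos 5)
Your proposal follows essentially the same route as the paper: show that the span of terms (A)--(E) is an ideal of $\mathcal E$ by checking stability under multiplication by the generators, reduce to one-sided multiplication via the involution (which sends (A), (B), (E) to terms of the same type and swaps (C) and (D)), handle the delicate case of multiplication by $(f(s)^*)^{-1}$ via Lemma \ref{lem:computinginverse}, and collapse the sandwiched expressions via \eqref{eq:mainrationalbehaviour}. One bookkeeping slip: forms (A) and (C) end in factors of type $(g(s)^*)^{-1}$, not $g(s)^{-1}$, so right multiplication of those by $g(s)^{-1}$ is not the ``easiest'' denominator-combining case but rather the mirror image of the delicate $(g(s)^*)^{-1}$ case; it is handled by the adjoint of the same argument, which is precisely what the paper dispatches with ``the other cases follow by symmetric arguments.''
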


 \begin{proof} It is clear that terms of the forms (A), (C), (D) lie in $\mathfrak I$, and terms of the form (B) lie in $\mathfrak I$ because of Proposition \ref{prop:BandB*}. For each $n\ge 0$, the ideal $h_nA$ is a minimal nonzero ideal of $\mathcal E$, and since $h_n(1-ss^*) \ne 0$, it follows that $h_nA \subseteq \mathfrak I$. Thus, $\soc(A) \subseteq \mathfrak I$, covering terms of the form (E).

  Observe that terms of the form (D) are obtained by applying the involution to terms of the form (C), while the involution sends terms of the forms (A), (B), (E) to terms of the same type. Thus, to show the result, it is enough to prove that the set of sums of
  terms of the forms (A)-(E) is stable under right multiplication. We will show this only for terms of the forms
  (B) and (D). The other cases follow by symmetric arguments. Observe that it is enough to work modulo $\soc (A)$.

  Consider first terms $a$ of the form (B). We show that
  $ab$ is a sum of terms of the forms (B) and (C), for any $b\in \{s,s^*, h(s)^{-1}, (h(s)^*)^{-1}  \}$, where $h\in \Sigma $.
  This is trivial for $b=s$ and for $b=h(s)^{-1}$, and it follows from the proof of Lemma \ref{lem:prodembeddSigmaA}
  for $b= s^*$. So it suffices to consider the case where $b=(h(s)^*)^{-1}$.   For that, it is enough to handle the case where $a= (1-s^*s) s^jg(s)^{-1}$.  But in this case, Lemma \ref{lem:computinginverse}
  gives, for $h(x) = 1+a_1x+\cdots + a_n x^n$,
 \begin{align*}
(1-s^*s)s^jg(s)^{-1} (h(s)^*)^{-1} & \equiv (1-s^*s) s^{j+n}(g(s)h_1(s))^{-1} \\
& - \sum _{i=0}^{n-1} (1-s^*s) s^{j+i}(g(s)h_1(s))^{-1} (1-ss^*) P_i(s)^*(h(s)^*)^{-1} ,
 \end{align*}
 where we have followed the notation used in that lemma. This is a sum of terms of the forms (B) and (C).

 Now for a term $a$ of the form (D), as before the only nontrivial case is that of a
 product $ab$, where $b= (f(s)^*)^{-1}$ for some $f\in \Sigma$.  To deal with such a product, it suffices to consider the case  $a= (1-ss^*) (h(s)^*)^{-1} (s^*)^j  (1-s^*s) g(s)^{-1} s^i$ and show that $ab$ is a sum of terms of the forms (A) and (D).  We have
 \begin{align*}
 \Big[  (1-ss^*) & (h(s)^*)^{-1} (s^*)^j  (1-s^*s) g(s)^{-1} s^i \Big] (f(s)^*)^{-1}\\
 &  \equiv  (1-ss^*) (h(s)^*)^{-1} (s^*)^j  (1-s^*s) (g(s)f_1(s))^{-1} s^{i+n} \\
  & - \sum _{l=0}^{n-1} (1-ss^*) (h(s)^*)^{-1} (s^*)^j  (1-s^*s) (g(s)f_1(s))^{-1} s^{i+l}(1-ss^*)P_l(s)^*(f(s)^*)^{-1} \, .
 \end{align*}
The term  $(1-ss^*) (h(s)^*)^{-1} (s^*)^j  (1-s^*s) (g(s)f_1(s))^{-1} s^{i+n}$ is of the form (D). Moreover, by (\ref{eq:mainrationalbehaviour}),  the term
$ (1-ss^*) (h(s)^*)^{-1} (s^*)^j  (1-s^*s) (g(s)f_1(s))^{-1} s^{i+l}(1-ss^*)$ can be simplified to
$\psi (\ol{A}(x)\odot B_l(x)) (1-ss^*)$, where $A(x)$ and $B_l(x)$ are the rational series representing $x^jh(x)^{-1}$ and $x^{i+l}(g(x)f_1(x))^{-1}$
respectively, so that the corresponding summands in the above formula are sums of terms of the form (A).

This concludes the proof.
  \end{proof}

We are ready to obtain the description of the $*$-regular closure of $A$ in $\mathcal U (G))$.

\begin{theorem}
 \label{thm:regclosureA}
 Let $\mathcal E$ be the subalgebra of $\prod_{n=0}^{\infty} h_nA$ generated by $\mathcal D$ and $\psi (\mathcal Q)(1-ss^*)$.
 Then $\mathcal E= \mathcal R (A, \mathcal U (G))$, and we have
 $$(1-ss^*)\mathcal E (1-ss^*)\cong \mathcal Q \qquad  \text{and} \qquad \mathcal E/\langle 1-ss^*\rangle \cong k(x).$$
 Moreover, $\mathcal E$ is a unit-regular ring, and it coincides with the division closure of $A$
 in $\mathcal U (G)$.
 \end{theorem}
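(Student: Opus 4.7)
The plan is to prove the several assertions of the theorem in an interlocking way, with the structural isomorphisms of the corner and quotient as the main organizing tools.

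First I would establish the two structural isomorphisms of $\mathcal E$. The inclusion $\psi(\mathcal Q)(1-ss^*) \subseteq (1-ss^*)\mathcal E(1-ss^*)$ is immediate from centrality of $\psi(\mathcal Q)$ in $\prod_{n=0}^{\infty} h_n A$. For the reverse inclusion, write $\mathcal E = \mathcal D + \mathfrak I$ with $\mathfrak I = \langle 1-ss^*\rangle$ (since the extra generators $\psi(\mathcal Q)(1-ss^*)$ already lie in $\mathfrak I$), and apply Lemma \ref{lem:normalformmathcalI} term by term, using the inverse-semigroup-style identities $(1-ss^*)s = 0$, $s^*(1-ss^*) = 0$, $(f(s)^*)(1-ss^*) = (1-ss^*)$, and $(1-ss^*)f(s)^{-1} = (1-ss^*)$ (and their symmetric counterparts for $1-s^*s$) to collapse the sandwich $(1-ss^*)\cdot(\text{type A--E})\cdot(1-ss^*)$ down to elements of $\psi(\mathcal Q)(1-ss^*)$; the recognition of Hadamard products is achieved via identity (\ref{eq:mainrationalbehaviour}). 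The quotient isomorphism $\mathcal E/\mathfrak I \cong k(x)$ then follows immediately since $\psi(\mathcal Q)(1-ss^*) \subset \mathfrak I$ and $\mathcal D / (\mathcal D \cap \mathfrak I) \cong k(x)$ by Proposition \ref{prop:BandB*}.

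Next I would verify the inclusion $\mathcal E \subseteq \mathcal R(A, \mathcal U(G))$ and show $\mathcal E$ is $*$-regular, which forces the reverse inclusion. The generators $f(s)^{\pm 1}$ and $(f(s)^*)^{\pm 1}$ lie in $\mathcal R(A, \mathcal U(G))$ as relative inverses of elements of $A$ invertible in $\prod h_n A$. To place $\psi(q)(1-ss^*)$ with $q = a \odot b^\dag \in \mathcal Q$ into the $*$-regular closure, first obtain $\psi(\mathcal R^{\mathrm o})(1-ss^*) \subseteq \mathcal R(A, \mathcal U(G))$ through the identity $\psi(a)(1-ss^*) = (1-ss^*)u(s)^*(1-s^*s)a(s)(1-ss^*)$, where $u(x) = 1/(1-x)$ is the Hadamard unit (application of (\ref{eq:mainrationalbehaviour})); then realize $\psi(b^\dag)(1-ss^*)$ as the relative inverse of $\psi(b)(1-ss^*)$ inside the $*$-regular corner $(1-ss^*)\mathcal R(A, \mathcal U(G))(1-ss^*)$, and multiply by $\psi(a)(1-ss^*)$. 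For $*$-regularity of $\mathcal E$, the positive definiteness of the involution is inherited from $\mathcal U(G)$; algebraically, I would produce relative inverses by a dichotomy. An element $e\in\mathcal E$ whose class in $k(x)$ is invertible has an inverse obtained by lifting the polynomial inverse from $k(x)$ and correcting using $\mathcal E = \mathcal D + \mathfrak I$; an element $e\in\mathfrak I$ admits a relative inverse assembled from relative inverses in the two corners $(1-ss^*)\mathcal E(1-ss^*)\cong\mathcal Q$ and $(1-s^*s)\mathcal E(1-s^*s)\cong\mathcal Q$ (both $*$-regular by Lemma \ref{lem:closureofmathcalR}), organized via the normal form of Lemma \ref{lem:normalformmathcalI}.

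Finally, I would address unit-regularity and the division closure identification. For unit-regularity, stable finiteness of $\mathcal E$ is inherited from the faithful trace on $\mathcal U(G)$, and combined with the cancellativity of $V(\mathcal Q)$ (commutative regular) and $V(k(x)) = \Z^+$, any relative inverse can be promoted to a unit-inverse. For the division closure: one containment is automatic because $\mathcal E$ is $*$-regular, so inverses in $\mathcal U(G)$ of invertible elements of $\mathcal E$ coincide with relative inverses and hence lie in $\mathcal E$. For the reverse containment, each generator of $\mathcal E$ is shown to be rationally expressible over $A$: the elements $f(s)^{\pm 1}$ are obvious, $(f(s)^*)^{\pm 1}$ come from the fact that the division closure of a $*$-subring is automatically $*$-closed (since $(x^{-1})^* = (x^*)^{-1}$), and $\psi(q)(1-ss^*)$ is obtained via the Hadamard formula together with the following key observation: for $b \in \mathcal R^{\mathrm o}$ with zero set $Z_b = \{n : b_n = 0\}$, the Skolem--Mahler--Lech Theorem \ref{thm:SML} guarantees $Z_b$ is quasi-periodic and hence $\chi_{Z_b} \in \mathcal R^{\mathrm o}$, so the element $\psi(b)(1-ss^*) + ss^* + \psi(\chi_{Z_b})(1-ss^*)$ is actually invertible in $\mathcal U(G)$ with ordinary inverse $\psi(b^\dag)(1-ss^*) + ss^* + \psi(\chi_{Z_b})(1-ss^*)$, exhibiting $\psi(b^\dag)(1-ss^*)$ through a single true inversion applied to elements already in the division closure. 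The main obstacle of the proof is the $*$-regularity verification in the second paragraph, since coordinating relative inverses through both $\mathcal Q$-corners and the $k(x)$-quotient via the normal form requires careful case analysis.
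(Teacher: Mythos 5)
Your outline of the two structural isomorphisms, of the inclusion $\mathcal E\subseteq \mathcal R(A,\mathcal U(G))$ via relative inverses taken in the corner, and of the division-closure identification all match the paper's argument in substance. The genuine gap is in the regularity step. Your dichotomy correctly reduces the problem to showing that the ideal $\mathfrak I=\langle 1-ss^*\rangle$ is a regular (non-unital) ring, but the claim that an arbitrary element of $\mathfrak I$ "admits a relative inverse assembled from relative inverses in the two corners, organized via the normal form" is not an argument. A general element of $\mathfrak I$ is a sum of cross terms of types (A)--(D) of Lemma \ref{lem:normalformmathcalI}, which mix the two corners and the socle; $\mathfrak I$ is not a direct sum of $(1-ss^*)\mathcal E(1-ss^*)$ and $(1-s^*s)\mathcal E(1-s^*s)$, and there is no direct procedure for producing a quasi-inverse of such a sum from quasi-inverses of its corner compressions. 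What the paper actually does is show that $1-\ol{s}\ol{s}^*$ is a \emph{full} idempotent of $\ol{\mathfrak I}:=\mathfrak I/\soc(\mathcal E)$ (this uses the identities \eqref{eq:equivalence1}--\eqref{eq:equivalence2} of Proposition \ref{prop:BandB*}, which force $\langle 1-ss^*\rangle=\langle 1-s^*s\rangle$), and then prove that $\ol{\mathfrak I}$ is left and right $s$-unital by an explicit and rather delicate construction of local units (the nested elements $c_1,c_2,c_1'',c_2'',c_2'''$ in the proof). Only then does \cite[Theorem 2.3]{Arnmi} transfer regularity from the commutative corner $\mathcal Q$ to $\ol{\mathfrak I}$, after which \cite[Lemma 1.3]{vnrr} is applied twice. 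Your proposal contains neither the fullness-plus-$s$-unitality mechanism nor any substitute for it, and this is precisely the hardest part of the theorem.

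A secondary weakness is the unit-regularity step. Cancellativity of $V$ of an ideal and of the quotient does not automatically yield cancellativity of $V$ of the extension, and "stable finiteness from the trace" does not imply unit-regularity for a regular algebra over a countable field (indeed Theorem \ref{thm:realizing} produces a stably finite regular algebra realizing the non-cancellative monoid $\calM$). The paper instead invokes the lifting-of-units criterion \cite[Lemma 3.5]{Bac}: $\mathfrak I$ is unit-regular because $V(\mathfrak I)\cong V((1-ss^*)\mathfrak I(1-ss^*))$ is cancellative, $\mathcal E/\mathfrak I\cong k(x)$ is unit-regular, and the unit $x$ lifts to the explicit unit $u=s+(1-ss^*)(1-s^*)^{-1}(1-s^*s)$ with inverse $v=s^*+(1-s^*s)(1-s)^{-1}(1-ss^*)$. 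You would need to supply this lifting (or an equivalent device) to complete that part of the argument.
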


\begin{proof}
 Let $\mathfrak I$ be the ideal of $\mathcal E$ generated by $1-ss^*$. It follows from Proposition \ref{prop:BandB*}  that $\mathfrak I$ is also
 the ideal generated by $1-s^*s$. Since $\mathfrak I \cap \mathcal D$ is a proper ideal of $\mathcal D$ containing $1-ss^*$, we see that $\mathfrak I \cap \mathcal D$ equals the ideal of $\mathcal D$ generated by $1-ss^*$, and so $\mathcal D/ (\mathfrak I \cap \mathcal D) \cong k(x)$. On the other hand, $\mathcal D+ \mathfrak I = \mathcal E$, and thus $\mathcal E/\mathfrak I \cong \mathcal D/ (\mathfrak I \cap \mathcal D) \cong k(x)$.

Let $\widetilde{\psi}\colon \mathcal Q \to (1-ss^*)\mathcal E (1-ss^*)= (1-ss^*) \mathfrak I (1-ss^*)$
 be the map defined by $\widetilde{\psi} (q)= (1-ss^*)\psi (q)$ for $q\in \mathcal Q$. Then $\widetilde{\psi}$ is an injective $*$-homomorphism.
 Using Lemma \ref{lem:normalformmathcalI} and (\ref{eq:mainrationalbehaviour}), we obtain that $\widetilde{\psi}$
 is surjective. Therefore $(1-ss^*)\mathcal E (1-ss^*)\cong \mathcal Q$ is a commutative $*$-regular ring by Lemma \ref{lem:closureofmathcalR}, isomorphic
 to the classical ring of quotients of $\mathcal R ^{{\rm o}}$.

 The next step consists in showing that $\ol{\mathfrak I}:= \mathfrak I/ \soc (\mathcal E)$ is regular.
 Since $1-\ol{s}\ol{s}^*$ is a full idempotent of $\ol{\mathfrak I}$ and $(1-\ol{s}\ol{s}^*)\ol{\mathfrak I} (1- \ol{s}\ol{s}^*)$
 is regular, it suffices to check that $\ol{\mathfrak I}$ is left and right $s$-unital, that is, for each $a\in \ol{\mathfrak I}$ there are elements
 $c$ and $d$ in $\ol{\mathfrak I}$ such that $ca= a = ad$ (see \cite[Theorem 2.3]{Arnmi}).
 By using the involution, it is enough to check the right $s$-unital condition. Using Lemma \ref{lem:normalformmathcalI} and taking common denominators,
 it is enough to show that given $f,g\in \Sigma $ and $N\ge 1$ there is an element $c\in \mathfrak I$ such that $(1-s^*s)s^if(s)^{-1}c\equiv  (1-s^*s) s^if(s)^{-1}$
 and $(1-ss^*)(s^*)^i(g(s)^* )^{-1}c \equiv (1-ss^*) (s^*)^i (g(s)^*)^{-1}$ for $i=0,\dots , N-1$.

Write $a := (1-s^N(s^*)^N)(g(s)^*)^{-1}= (\sum _{i=0}^{N-1} s^i (1-ss^*)(s^*)^i) (g(s)^*)^{-1}$ and $b := \linebreak (1-(s^*)^{N} s^N) f(s)^{-1}=
(\sum _{i=0}^{N-1} (s^*)^i (1-s^*s) s^i) f(s)^{-1}$. We will use the trick of \cite[Lemma  2.2]{Arnmi}, that is, if we are able to
find an element $c_1$ in $\mathfrak I$ such that $ac_1\equiv a$ and then an element $c_2$ in $\mathfrak I$ such that
$b(1-c_1) c_2 \equiv  b(1-c_1)$, then the element $c:= c_1-c_1c_2+c_2$ satisfies that $ac\equiv  a$ and $bc\equiv b$, and clearly $c$
satisfies the desired conditions. (Here we rely on the fact that $(s^i(1-ss^*)(s^*)^i \mid i\ge0)$ and $((s^*)^i(1-s^*s)s^i \mid i\ge0)$ are sequences of pairwise orthogonal idempotents.)

Set $c_1:= g(s)^*(1-s^N(s^*)^N) (g(s)^*)^{-1}\in \mathfrak I$. Then $ac_1= a$.
Now observe that
$$b(1-c_1)= (1-(s^*)^N s^N) f(s)^{-1} g(s)^*s^N(s^*)^N (g(s)^*)^{-1}\, . $$
We will find an element $c_2'$ in $\mathfrak I$ such that $b(1-c_1)g(s)^*c_2'=b(1-c_1)g(s)^*$. Then the element
$c_2= g(s)^*c_2'(g(s)^*)^{-1}$  will satisfy $b(1-c_1)c_2=b(1-c_1)$ and we will find our desired element $c=c_1-c_1c_2+c_2$.

Write $d := b(1-c_1) g(s)^*= (1-(s^*)^Ns^N) f(s)^{-1} g(s)^* s^N (s^*)^N$. Note that this is an element belonging to the subalgebra
$\Sigma^{-1}A$. By the proof of Lemma \ref{lem:prodembeddSigmaA}, $d$ can be written as a linear combination of terms
of the forms $(s^*)^i(1-s^*s)s^jf(s)^{-1} $ and $(s^*)^i(1-s^*s) s^jf(s)^{-1} (1-ss^*) (s^*)^l$. Hence, there exists a positive integer $M$ such that,
with $a' := (1-(s^*)^Ms^M) f(s)^{-1}$ and $b' := 1-s^M(s^*)^M$, if $a'c_2'\equiv a'$ and $b'c_2'\equiv b'$ then $dc_2'\equiv d$.
Set $c_1''= f(s)(1-(s^*)^M s^M) f(s)^{-1}$. Then $a'c_1''= a'$. Now observe that
$$b'(1-c_1'')= (1-s^M(s^*)^M)f(s)(s^*)^Ms^M f(s)^{-1}\, ,$$
and just as before it suffices to find $c_2'''$ in $\mathfrak I$ such that $b'(1-c_1'')f(s)c_2'''\equiv  b'(1-c_1'')f(s)$.
But $b'(1-c_1'')f(s)= (1-s^M(s^*)^M)f(s)(s^*)^Ms^M$ belongs to $A$, and so there exists indeed an idempotent $c_2'''$ in
the ideal of $A$ generated by
$1-ss^*$ (and so in $\mathfrak I$) such that
$b'(1-c_1'')f(s)c_2'''\equiv b'(1-c_1'')f(s)$ (see Lemma \ref{lem:AmodsocA}).
Now $c_2'' := f(s)c_2'''f(s)^{-1}$ satisfies $b'(1-c_1'')c_2''= b'(1-c_1'')$ and so $c_2' := c_1''-c_1''c_2''+c_2''$
satisfies $a'c_2'=a'$ and $b'c_2'=b'$ and, consequently, $dc_2'=d$.
This concludes the proof of the fact that $\ol{\mathfrak I}$ is an $s$-unital ring.
By \cite[Theorem 2.3]{Arnmi}, we get that $\ol{\mathfrak I}$ is a regular ring. Since also $\soc (\mathcal E) =\soc (A)$
is a regular ring, we get from \cite[Lemma 1.3]{vnrr} that $\mathfrak I$ is a regular ring. Finally, since $\mathcal E / \mathfrak I \cong k(x)$
is a field, a further application of \cite[Lemma 1.3]{vnrr} gives that $\mathcal E$ is a regular ring. Since $\mathcal E$ is a $*$-subalgebra of $\prod_{n=0}^\infty h_nA$, it is also $*$-regular.

We clearly have that $\mathcal E = \mathcal R _2(A, \mathcal U (G))= \mathcal R (A, \mathcal U (G))$. Indeed, we obtain
$\mathcal D$ by adjoining inverses of elements of $A$, namely $\Sigma^{-1} \cup (\Sigma^*)^{-1}$, so $\mathcal D \subseteq \mathcal R _1(A, \mathcal U (G))$,
and then we adjoin to $\mathcal D$ the relative inverses of the elements in $(1-ss^*)\mathcal D (1-ss^*)\cong \mathcal R ^{{\rm o}}$,
so that
$$\mathcal E \subseteq \mathcal R _2 (A, \mathcal U (G))\subseteq  \mathcal R (A, \mathcal U (G)).$$
Since $\mathcal E$ is *-regular, the above inclusions must be equalities.

We now show that $\mathcal E $ is unit-regular. Notice that $\mathfrak I$ is unit-regular, that is, all corner rings
$e\mathfrak I e$, for $e= e^2\in \mathfrak I$, are unit-regular. This follows from the fact that $(1-ss^*)\mathfrak I (1-ss^*)$
is unit-regular (because it is a commutative regular ring), and so $V(\mathfrak I ) = V(\mathfrak I (1-ss^*) \mathfrak I )  \cong
V((1-ss^*) \mathfrak I (1-ss^*))$ is cancellative. (The easy fact that $V(R e R) \cong V(eRe)$ for any idempotent $e$ of a non-necessarily unital ring $R$ was
observed in the first paragraph of the proof of \cite[Lemma 7.3]{AF}.) Since $\mathcal E / \mathfrak I\cong k(x) $ is also unit-regular, it suffices
to check that units from $\mathcal E / \mathfrak I$ lift to units in $\mathcal E$ (see \cite[Lemma 3.5]{Bac}). Units from $\Sigma $ obviously
lift to units in $\mathcal E$, so it remains to show that $x$ lifts to a unit in $\mathcal E$. Consider the elements $u :=s+(1-ss^*)(1-s^*)^{-1}(1-s^*s)$
and $v := s^*+(1-s^*s) (1-s)^{-1}(1-ss^*)$ in $\mathcal E$. Then $u+\mathfrak I = x$. Using \eqref{eq:mainrationalbehaviour} and \eqref{eq:mainrationalbehav2}, we compute that
\begin{align*}
(1-ss^*) (1-s^*)^{-1} (1-s^*s) (1-s)^{-1} (1-ss^*) &= 1-ss^*  \\
(1-s^*s) (1-s)^{-1} (1-ss^*) (1-s^*)^{-1} (1-s^*s) &= 1-s^*s ,
\end{align*}
and consequently $uv=vu=1$. Therefore $\mathcal E $ is unit-regular.

It remains to prove that $\mathcal E$ is the division closure of $A$ in $\mathcal U (G)$. Denote by $\mathcal C$ this division closure.
Then clearly $\mathcal D\subseteq \mathcal C$.  Moreover $\mathcal E$ is the ring generated by $\mathcal D$ and the inverses in $\mathcal U (G)$
of elements of the form $\psi (r)(1-ss^*) +ss^*$, where $r$ is a non-zero-divisor in $\mathcal R^{{\rm o}}$. So $\mathcal E\subseteq \mathcal C$.
Since $\mathcal E$, being regular, is closed under inversion, we get that $\mathcal E = \mathcal C $.
\end{proof}

For a subring $S$ of $\mathcal U (G)$, denote by $\mathcal C (S)$ the set $\rk (\bigcup _{i=1}^{\infty} M_i(S))$ of all ranks of matrices over $S$.
Note that $S\subseteq \mathbb R ^+$. Denote also by $\mathcal G (S)$ the subgroup of $\mathbb R$ generated by $\mathcal C (S)$.

\begin{corollary}
 \label{cor:QplusiscE}
 With the above notation,  we have $\mathcal C (\mathcal E) =\Q^+$ and $\mathcal G (\mathcal E)= \Q$. In particular, we obtain that
 $\Q= \mathcal G(A) \subsetneq \mathcal G (k G)$.
  \end{corollary}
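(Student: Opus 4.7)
The plan is first to determine $\mathcal G(\mathcal E)$ and $\mathcal C(\mathcal E)$ using the explicit description of the $*$-regular closure $\mathcal E=\mathcal R(A,\mathcal U(G))$ from Theorem \ref{thm:regclosureA}, and then to deduce $\mathcal G(A)=\mathcal G(\mathcal E)$ by exploiting that $\mathcal E$ coincides with the division closure of $A$. The strict inclusion $\mathcal G(A)\subsetneq\mathcal G(kG)$ would then follow at once from Grabowski's construction in \cite{Grab2} of an element of $kG$ with irrational von Neumann dimension, so the real content of the corollary lies in the equality $\mathcal G(A)=\Q$.

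To exhibit rational ranks inside $\mathcal E$, I would use the corner identification $(1-ss^*)\mathcal E(1-ss^*)\cong\mathcal Q$. Since $h_n(1-ss^*)$ identifies with the matrix unit $e_{11}\in h_nA\cong M_{n+1}(k)$, and since $\rk(h_n)=(n+1)2^{-(n+2)}$, one computes $\rk(h_n(1-ss^*))=2^{-(n+2)}$. For each $N\ge 2$, the idempotent $\psi\bigl(x^{N-2}(1-x^N)^{-1}\bigr)(1-ss^*)\in\mathcal E$ equals $\sum_{m\ge 0}h_{N-2+mN}(1-ss^*)$, whose rank is the geometric sum $\sum_{m\ge 0}2^{-(N+mN)}=1/(2^N-1)$. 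The socle already contributes $\Z[\tfrac{1}{2}]\subseteq\mathcal G(\mathcal E)$, and since Euler's theorem guarantees that every odd positive integer $m$ divides some $2^N-1$, we obtain $\mathcal G(\mathcal E)\supseteq\Q$. For the reverse inclusion, Lemma \ref{lem:closureofmathcalR} together with the Skolem--Mahler--Lech theorem shows that idempotents of $\mathcal Q$ are exactly the characteristic functions of quasi-periodic subsets $S\subseteq\Z^+$, and their ranks $\sum_{n\in S}2^{-(n+2)}$ are all rational; combined with $V(\mathcal E/\mathfrak I)=V(k(x))=\Z^+$, this forces every rank of an idempotent in $M_\infty(\mathcal E)$ to lie in $\Q$. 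The unit-regularity of $\mathcal E$ and the presence of identity matrices of arbitrary size then upgrade $\mathcal G(\mathcal E)=\Q$ to $\mathcal C(\mathcal E)=\Q^+$.

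Finally, for $\mathcal G(A)=\Q$, the inclusion $\mathcal G(A)\subseteq\mathcal G(\mathcal E)$ is trivial from $A\subseteq\mathcal E$. The reverse inclusion requires descending ranks from $\mathcal E$ to $A$: by Theorem \ref{thm:regclosureA}, $\mathcal E$ is the division closure of $A$ in $\mathcal U(G)$, so every element of $\mathcal E$ is an entry of the inverse of an invertible matrix over $A$, and each idempotent in $M_n(\mathcal E)$ is stably associated to a matrix over $A$ of the same von Neumann rank. Hence $\mathcal C(A)$ and $\mathcal C(\mathcal E)$ generate the same subgroup of $\mathbb R$, yielding $\mathcal G(A)=\Q$. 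I expect this descent step to be the main technical obstacle, since it requires carefully exploiting the division-closure structure to preserve ranks rather than a direct computation; an alternative route would be to construct explicit matrices over $A$ built from the Markov-type operator $T=s+s^*$, along the lines of \cite{GZ} and \cite{DS}, whose spectral projections directly realize ranks of the form $1/(2^N-1)$.
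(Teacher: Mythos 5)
Your proposal is correct and follows essentially the same route as the paper: computing $\mathcal C(\mathcal E)=\Q^+$ from the corner $(1-ss^*)\mathcal E(1-ss^*)\cong\mathcal Q$ together with the quasi-periodic description of its idempotents and the fact that $\mathcal E/\mathfrak I$ is a field, and then descending to $\mathcal G(A)$ via Cramer's rule applied to the rational (= division) closure, with strictness of $\mathcal G(A)\subsetneq\mathcal G(kG)$ imported from \cite{Grab2}. The only cosmetic differences are that you exhibit the ranks $1/(2^N-1)$ explicitly and invoke Euler's theorem where the paper simply notes that the corner ranks are exactly $\Q\cap[0,1/2]$, and that under stable association the rank shifts by an integer rather than being preserved exactly --- which, as you observe, is harmless for the generated subgroup.
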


   \begin{proof}
   Note that, $\mathcal E $ being a $*$-regular ring with positive definite involution, the set $\mathcal C (\mathcal E)$ is the set of numbers of the form
   $\rk (p)$, where $p$ ranges
   over the projections in matrices over $\mathcal E$. Moreover every projection in $M_n(\mathcal E)$ is equivalent to a diagonal projection
   (since $V(\mathcal E)$ is a refinement monoid).

    Since $(1-ss^*)\mathcal E (1-ss^*) = \psi (\mathcal Q) (1-ss^*)$, we get from Lemma \ref{lem:closureofmathcalR}  and Theorem \ref{thm:SML} that
    the set of ranks of elements in $(1-ss^*)\mathcal E (1-ss^*)$ is exactly  $\Q \cap [0,1/2]$. (Recall that $\rk(h_n(1-ss^*)) = 2^{-(n+2)}$ for all $n\ge0$.) It follows that
    $\mathcal C (\mathfrak I)= \Q^+$. If $p$ is a projection in $\mathcal E$, then, since $\mathcal E/\mathfrak I$ is a field, either $p\in \mathfrak I$
    or $1-p\in \mathfrak I$. It follows that $\rk (p)\in \Q^+$. Therefore $\mathcal C (\mathcal E) =\Q^+$.

    Let now $X\in M_ n(\mathcal E)$, for some $n\ge 1$. Since $\mathcal E $ is the division closure of $A$ and it is regular, it is also the rational closure of $A$.
    Hence, it follows from Cramer's rule (\cite[Proposition 7.1.3]{Cohn}) that there exist a non-negative integer $m$, invertible matrices $P,Q\in M_{n+m}(\mathcal E)$ and
    a matrix $Y\in M_{n+m}(A)$ such that
    $$P(X\oplus I_m) Q = Y.$$
    It follows that $\rk (Y)= \rk (X)+m$, whence $\rk(X) \in \mathcal G(A)$. Since $\mathcal C (\mathcal E)=\Q^+$, we conclude that
    $\Q = \mathcal G (A)\subset \mathcal G (kG)$. The fact that the above inclusion is strict follows from \cite{Grab2}.
      \end{proof}

  We conclude by computing, using our tools, the rank of the element $s+s^*$ (cf. \cite{DS}).

  \begin{example}
   Let $g_1= \psi ((1-x^2)^{-1})$. Observe that $g_1$ is the central projection in $\prod_{i=1}^{\infty} M_i(k)$ corresponding to the sequence
   $(1,0,1,0,1,0,\dots )$. Set $g_2: =1-g_1$. Observe that $(1-s^*s)g_1$ and $(1-s^*s)g_2$ belong to $\mathcal E$. Indeed, it follows from \eqref{eq:mainrationalbehav2} that
  $$g_1(1-s^*s)=
   (1-s^*s)(1-s^2)^{-1}(1-ss^*)(1-s^*)^{-1} (1-s^*s).$$
   We claim that $\alpha s^*(1+s^2)= g_1(1-s^*s) s$, where $\alpha = g_1(1-s^*s) s^2(1+s^2)^{-1}$. It suffices to show that
 \begin{equation}
 \label{pimrho}
 \pi_M \rho \bigl( \alpha s^*(1+s^2) - g_1(1-s^*s) s \bigr) = 0
 \end{equation}
 for all odd $m$, where $\rho : \prod_{n=0}^\infty h_nA \to \prod_{i=1}^\infty M_i(k)$ is the isomorphism stemming from Lemma \ref{lem:centralprojhn} and $\pi_m : \prod_{i=1}^\infty M_i(k) \to M_m(k)$ is the canonical projection. Observe that
\begin{align*}
\alpha s^*(1+s^2) - g_1(1-s^*s) s &= g_1(1-s^*s) s \bigl[ s(1+s^2)^{-1} s^*(1+s^2) - 1 \bigr]  \\
 &= g_1(1-s^*s) s \bigl[ (1+s^2)^{-1} \bigl( 1-(1-ss^*) \bigr) (1+s^2) - 1 \bigr]  \\
 &= - g_1 (1-s^*s)s (1+s^2)^{-1} (1-ss^*).
\end{align*}
For odd $m$, we have
$$\pi_m \rho \bigl( (1+s^2)^{-1} (1-ss^*) \bigr) = e_{11} - e_{31} + e_{51} - \cdots \pm e_{m1} .$$
Since $\pi_m \rho ((1-s^*s)s) = e_{m,m-1}$ with $m-1$ even, \eqref{pimrho} follows. A similar computation gives that $(s+s^*)\beta = g_2(1-s^*s) s$, where $\beta = s^*s (1+(s^*)^2)^{-1}s^*(1-s^*s)sg_2$.

 Next, we claim that
 \begin{equation}
  \label{eq:spluss*decomposition}
(s+s^*)\mathcal E = (1+\alpha)s^*(1+s^2)\mathcal E \oplus g_2(1-s^*s) s\mathcal E.
  \end{equation}
 Observe that
 $$s+s^*= s^*(1+s^2)+(1-s^*s)s= (1+\alpha )s^*(1+s^2) +g_2(1-s^*s) s \, ,$$
 so that $(s+s^*)\mathcal E \subseteq (1+\alpha)s^*(1+s^2)\mathcal E + g_2(1-s^*s) s\mathcal E $.
 On the other hand, $g_2(1-s^*s) s= (s+s^*)\beta$, so that $g_2(1-s^*s)s\in (s^*+s) \mathcal E$
 and consequently also $(1+\alpha) s^*(1+s^2)\in (s^*+s) \mathcal E$, which gives the reverse inclusion.
 If $z\in (1+\alpha) s^*(1+s^2)\mathcal E \cap g_2(1-s^*s) s \mathcal E$ then
 $z= (1-s^*s) g_2z= (1-s^*s)g_2(1+\alpha) s^*(1+s^2)w$ for some $w\in \mathcal E$.
 Since $g_2\alpha = 0$ and $(1-s^*s)s^*= 0$ we obtain that $z= 0$. This shows (\ref{eq:spluss*decomposition}).
 Since $\alpha ^2 =0$, we have that $1+\alpha $ is invertible, and so, we get from (\ref{eq:spluss*decomposition})
  $$\rk (s+s^*) = \rk ((1+\alpha) s^*(1+s^2))+ \rk (g_2(1-s^*s)s)= \rk (s^*s) + \rk (g_2(1-s^*s)ss^*). $$
In $\mathcal N(G)$, we have $s^*s = e_1$ and $g_2(1-s^*s)ss^* = \sum_{n=1,\, n\, \text{odd}}^\infty f_{-n} e_{-n+1} e_{-n+2} \cdots e_0 f_1$, whence $\rk(s^*s) = \frac12$ and $\rk(g_2(1-s^*s)ss^*) = \frac16$. Therefore
$$\rk(s+s^*) = \frac23.$$
 \end{example}



\begin{thebibliography}{10}

\bibitem{Aext} P. Ara,  \emph{Extensions of exchange rings}, J. Algebra \textbf{197} (1997), 409--423.

\bibitem{Arnmi} P. Ara, \emph{Rings without identity which are Morita equivalent to regular rings}, Algebra Colloq. \textbf{11} (2004), 533--540.

\bibitem{Areal} P.  Ara, \emph{The realization problem for von Neumann regular rings},
in ``Ring Theory 2007. Proceedings of the Fifth China-Japan-Korea
Conference", (H. Marubayashi, K. Masaike, K. Oshiro, M. Sato, eds.);
World Scientific, 2009, pp. 21--37.

\bibitem{Aposet} P. Ara, \emph{The regular algebra of a poset},  Trans. Amer. Math. Soc. \textbf{362} (2010), 1505--1546.

\bibitem{AB} P. Ara, M. Brustenga, \emph{The regular algebra of a quiver}. J. Algebra \textbf{309} (2007), 207--235.

\bibitem{AE} P. Ara, R. Exel, \emph{Dynamical systems associated to separated graphs, graph algebras, and paradoxical decompositions}.
Adv. Math. {\bf 252} (2014), 748--804.

\bibitem{AF} P. Ara, A. Facchini,  \emph{Direct sum decompositions of modules, almost trace ideals, and pullbacks of monoids}, Forum Math. \textbf{18} (2006), 365--389.

\bibitem{AG} P. Ara, K. R. Goodearl, \emph{Leavitt path algebras of
separated graphs}. J. reine angew. Math. {\bf 669} (2012), 165--224.

\bibitem{TW} P. Ara, K. R. Goodearl, \emph{Tame and wild refinement monoids}, Semigroup Forum \textbf{91} (2015), 1--27. 

 \bibitem{AGOP} P. Ara, K. R. Goodearl, K. C. O'Meara, E. Pardo,
 \emph{Separative cancellation for projective modules over exchange rings}, Israel J. Math. \textbf{105} (1998), 105-137.

\bibitem{AMP} P. Ara, M. A. Moreno, E. Pardo, \emph{Nonstable $K$-theory for graph
algebras}, Algebr. Represent. Theory \textbf{10} (2007), 157--178.

\bibitem{austin} T. Austin, \emph{Rational group ring elements with kernels having irrational dimension},
Proc. Lond. Math. Soc. (3) \textbf{107} (2013), 1424--1448.



\bibitem{Bac} G. Baccella,  \emph{Semi-Artinian V-rings and semi-Artinian von Neumann regular rings},
J. Algebra \textbf{173} (1995), 587--612.


\bibitem{Berb} S. K. Berberian, Baer *-rings, Die Grundlehren der mathematischen Wissenschaften,
Band 195. Springer-Verlag, New York-Berlin, 1972.

\bibitem{BR} J. Berstel, C. Reutenauer, ``Rational series and their languages".
EATCS Monographs on Theoretical Computer Science, 12. Springer--Verlag, Berlin, 1988.

\bibitem{CL} C. L. Chuang, P. H.  Lee, \emph{On regular subdirect products of
simple Artinian rings}, Pacific J. Math., \textbf{142} (1990),
17--21.

\bibitem{Cohn} P. M. Cohn, ``Free Rings and Their
Relations'', Second Edition, LMS Monographs 19, Academic Press,
London, 1985.

\bibitem{CrabbMunn} M. J. Crabb, W. D. Munn,   \emph{The centre of the algebra of a free inverse monoid}, Semigroup Forum {\bf 55} (1997), 215--220.


\bibitem{DS} W. Dicks, T. Schick, \emph{The spectral measure of certain elements of the complex group ring of a wreath product},
 Geom. Dedicata \textbf{93} (2002), 121--137.

 \bibitem{Dob}
 H. Dobbertin, \emph{Refinement monoids, Vaught monoids, and Boolean algebras}, Math. Annalen \textbf{265} (1983), 473--487.


\bibitem{DPP} N. Dubrovin, P. Pr\'{\i}hoda, G. Puninski,
\emph{Projective modules over the Gerasimov-Sakhaev counterexample},
J. Algebra {\bf 319} (2008), 3259--3279.

\bibitem{elek} G. Elek, \emph{Connes embeddings and von Neumann regular closures of amenable group algebras},
Trans. Amer. Math. Soc. \textbf{365} (2013), 3019--3039.

\bibitem{elek2}  G. Elek, \emph{Lamplighter groups and von Neumann's continuous regular
rings}, arXiv:1402.5499 [math.RA].

\bibitem{vnrr} K. R. Goodearl, ``Von Neumann Regular Rings'',
Pitman, London 1979; Second Ed., Krieger, Malabar, Fl., 1991.

\bibitem{directsum}
K. R.  Goodearl, \emph{Von Neumann regular rings and direct sum
decomposition problems}, in ``Abelian groups and modules" (Padova,
1994), Math. and its Applics. 343, pp. 249--255, Kluwer Acad. Publ., Dordrecht,
1995.

\bibitem{GZane}
K. R. Goodearl, \emph{Leavitt path algebras and direct limits}, in ``Rings, Modules and Representations" (N. V. Dung, et al., eds.), Contemp. Math. \textbf{480} (2009), 165--187.

\bibitem{GM}
K. R. Goodearl, P.  Menal, \emph{Stable range one for rings with
many units}, J. Pure Applied Algebra \textbf{54} (1988), 261--287.

\bibitem{GW}
K. R. Goodearl, R. B. Warfield, Jr., \emph{Algebras over zero-dimensional rings}, Math. Annalen \textbf{223} (1976), 157--168.

\bibitem{Grabowski} L. Grabowski, \emph{On Turing dynamical systems and the Atiyah Problem},  Invent. Math. \textbf{198} (2014), 27--69.

\bibitem{Grab2} L. Grabowski, \emph{Irrational $l^2$-invariants arising from the lamplighter group},  to appear in Groups Geom. Dyn., arXiv:1009.0229v4 [math.GR].

\bibitem{GZ}
R. I. Grigorchuk, A.  Zuk, \emph{The lamplighter group as a group generated by a 2-state automaton, and its spectrum},
Geom. Dedicata \textbf{87} (2001), 209--244.

\bibitem{GLSZ} R. I. Grigorchuk, P. Linnell, T. Schick, A. Zuk, \emph{On a question of Atiyah}, C. R. Acad. Sci. Paris S\' er. I Math. \textbf{331}
(2000),  663--668.

\bibitem{HR} R. Hancock, I. Raeburn, \emph{The C*-algebras of some inverse
semigroups}, Bull. Austral. Math. Soc. {\bf 42} (1990), 335--348.


\bibitem{Jac} N. Jacobson, \emph{Some remarks on one-sided inverses}, Proc. Amer. Math. Soc. \textbf{1} (1950), 352--355.

\bibitem{lam} T. Y. Lam, ``A First Course in Noncommutative Rings'', Grad. Texts in Math. 131, Springer-Verlag, New York, 1991; second ed., 2001.

\bibitem{Lawson} M. V. Lawson, ``Inverse Semigroups", World Scientific, Singapore, 1998.

\bibitem{LW}  F. Lehner, S. Wagner, \emph{Free lamplighter groups and a question of Atiyah},  Amer. J. Math. \textbf{135} (2013), 835--849.

\bibitem{LinnD} P. A. Linnell, \emph{Division rings and group von Neumann algebras}, Forum Math. \textbf{5} (1993), 561--576.


\bibitem{Linn-LMS} P. Linnell, \emph{Noncommutative localization in group rings}.
in ``Non-commutative localization in algebra and topology", 40--59, London Math. Soc. Lecture Note Ser., 330, Cambridge Univ. Press, Cambridge, 2006.

\bibitem{LLS} P. A. Linnell, W.  L\"uck, T. Schick, \emph{The Ore condition, affiliated operators, and the lamplighter group}, High-dimensional manifold topology,
315--321, World Sci. Publ., River Edge, NJ, 2003.

\bibitem{LS} P. Linnell, T. Schick, \emph{The Atiyah conjecture and Artinian rings}, Pure Appl. Math. Q. \textbf{8} (2012),  313--327.

\bibitem{luck} W. L\" uck, ``L2-Invariants: Theory and Applications to Geometry and K-Theory",
Ergebnisse der Mathematik und ihrer Grenzgebiete (3) 44,
Springer-Verlag, Berlin, 2002.


\bibitem{MM} P. Menal, J. Moncasi, \emph{On regular rings with stable range 2}, J. Pure Appl. Algebra \textbf{24} (1982), 25--40.


\bibitem{Munn} W. D. Munn, \emph{Free inverse semigroups}, Proc. London Math. Soc. (3) \textbf{29} (1974), 385-404.

\bibitem{nichol} W. K. Nicholson, \emph{Lifting idempotents and exchange rings}, Trans. Amer. Math. Soc. \textbf{229} (1977), 269--278.

\bibitem{PSZ} M. Pichot, T. Schick, A. Zuk, \emph{Closed manifolds with transcendental $L^2$-Betti numbers},  
J. London Math. Soc. (2) \textbf{92} (2015), 371--392.

\bibitem{Preston} G. B. Preston, \emph{Monogenic inverse semigroups}, J. Austral. Math. Soc. Ser. A \textbf{40} (1986), 321--342.

\bibitem{schofield} A. H. Schofield, ``Representations of Rings
over Skew Fields'', LMS Lecture Notes Series 92, Cambridge Univ.
Press, Cambridge, UK, 1985.

\bibitem{War} R. B. Warfield, Jr., \emph{Exchange rings and decompositions of modules}, Math. Annalen \textbf{199} (1972), 331--36.

\bibitem{W98IsraelJ}
F. Wehrung, \emph{Non-measurability properties of interpolation vector spaces}, Israel J. Math. \textbf{103} (1998), 177--206.

\bibitem{W13ART}
F. Wehrung, \emph{Lifting defects for nonstable K-theory of exchange rings and C*-algebras}, Algebras and Representation Theory \textbf{16} (2013), 553--589.


\end{thebibliography}
\end{document}